\renewcommand{\epsilon}{\varepsilon}
\newcommand{\Map}{\mathrm{Map}}
\newcommand{\xtor}{x-\mathrm{tor}}
\newcommand{\uadd}{\mathcal{U}_{\mathrm{wadd}}}
 \renewcommand{\hom}{\mathrm{Hom}}
\newcommand{\catst}{\mathrm{Cat}^{\mathrm{perf}}_{\infty}}
\newcommand{\Catst}{\widehat{\mathrm{Cat}}^{\mathrm{perf}}_{\infty}}
\renewcommand{\sp}{\mathrm{Sp}}
\newcommand{\cbaug}{\mathrm{CB}^\bullet_{\mathrm{aug}}}
\newcommand{\cb}{\mathrm{CB}^\bullet}
\newcommand{\tow}{\mathrm{Tow}}
\newcommand{\townil}{\mathrm{Tow}^{\mathrm{nil}}}
\newcommand{\towconst}{\mathrm{Tow}^{\mathrm{const}}}
\newcommand{\towen}{\mathrm{Tow}^{\epsilon,\mathrm{nil}}}
\newcommand{\prls}{\mathcal{P}\mathrm{r}^L_{\mathrm{st}}}
\newcommand{\mot}{\mathrm{Mot}}
\newcommand{\perf}{\mathrm{Perf}}
\providecommand\@dotsep{5}
\newcommand{\fun}{\mathrm{Fun}}
\newcommand{\md}{\mathrm{Mod}}
\newcommand{\Coind}{\mathrm{Coind}}
\newcommand{\Res}{\mathrm{Res}}
\theoremstyle{definition}
\newtheorem{cons}[thm]{Construction}
\Crefname{thmA}{Theorem}{Theorems}
\Crefname{conj}{Conjecture}{Conjectures}
\newcommand{\e}[1]{\mathbb{E}_{#1}}
\newcommand{\ko}{\mathit{ko}}
\newcommand{\ku}{\mathit{ku}}
\newcommand{\KO}{\mathit{KO}}
\newcommand{\KU}{\mathit{KU}}
\newcommand{\THH}{\mathit{THH}}
\newcommand{\TC}{\mathit{TC}}
\newcommand{\tmf}{\mathit{tmf}}
\newcommand{\Tmf}{\mathit{Tmf}}
\newcommand{\TMF}{\mathit{TMF}}
\renewcommand{\mod}{\mathrm{Mod}}
\renewcommand{\theenumi}{\arabic{enumi}}
\renewcommand{\p@enumii}{\theenumi.}
\newcommand{\triplearrows}{\begin{smallmatrix} \to \\ \to \\ 
\to \end{smallmatrix} }
\newcommand{\CAlg}{\mathrm{CAlg}}
\newcommand{\clg}{\CAlg}
\begin{document}
	\title{Descent in algebraic $K$-theory and a conjecture of Ausoni-Rognes}
	\author{Dustin Clausen}
	\address{University of Copenhagen\\ Copenhagen, Denmark}
	\email{dustin.clausen@math.ku.dk}
	\urladdr{http://dtclausen.tumblr.com}

	\author{Akhil Mathew}
	\address{University of Chicago\\ Chicago, USA }
	\email{amathew@math.uchicago.edu}
	\urladdr{http://math.uchicago.edu/~amathew/}

	\author{Niko Naumann}
	\address{University of Regensburg\\
	NWF I - Mathematik; Regensburg, Germany}
	\email{Niko.Naumann@mathematik.uni-regensburg.de}
	\urladdr{http://homepages.uni-regensburg.de/~nan25776/}

	\author{Justin Noel}
	\address{University of Regensburg\\
	NWF I - Mathematik; Regensburg, Germany}
	\email{justin.noel@mathematik.uni-regensburg.de}
	\urladdr{http://nullplug.org}

	\date{\today}

	\begin{abstract}
	Let $A \to B$ be a $G$-Galois extension of rings, or more generally of
	$\mathbb{E}_\infty$-ring spectra in the sense of Rognes. A basic question
	in algebraic $K$-theory asks how close the map $K(A) \to K(B)^{hG}$ is to
	being an equivalence, i.e., how close algebraic $K$-theory is to satisfying
	Galois descent. An elementary argument with the transfer shows that this
	equivalence is true rationally in most cases of interest. Motivated by the
	classical descent theorem of Thomason, one also expects such a result after
	periodic localization.

	We formulate and prove a general result which enables one to promote
	rational descent statements as above into descent statements after
	periodic localization.  This reduces the localized descent problem to establishing an elementary condition on $K_0(-)\otimes \mathbb{Q}$. As applications, we prove various
	descent results in the periodically localized $K$-theory, $\TC$, $\THH$, etc.\ of
	structured ring spectra, and verify several cases of a conjecture of Ausoni and Rognes.
	\end{abstract}

	\maketitle
	
\tableofcontents

\section{Introduction}
\subsection{Motivation}
Let $X$ be a noetherian scheme. 
A subtle and important invariant of $X$ is given by the 
\emph{algebraic $K$-theory}
groups
$\left\{K_n(X)\right\}_{n \geq 0}$. As $X$ varies, the
groups $\left\{K_n(X)\right\} $ behave
something like a cohomology theory in $X$. 
For example, they form a contravariant functor in $X$ and there is
an analog of the classical Mayer-Vietoris sequence 
thanks to the localization properties of algebraic $K$-theory
\cite[Thm.~10.3]{TT90}. 
A highbrow formulation of this property is that the groups $\left\{K_n(X)\right\}$
arise as the homotopy groups of a \emph{spectrum} $K(X)$, and that the contravariant
functor
\[ K(-)\colon \mathrm{Sch}^{\mathrm{op}} \to \sp_{\geq 0}  \]
forms a sheaf of \emph{connective spectra} on the Zariski site of
$X$.\footnote{To obtain a sheaf of spectra, one has to work with the
non-connective version $\mathbb{K}$ of $K$-theory.}

As is well-known, however, the Zariski topology of $X$ 
is too coarse to have a strong analogy with algebraic topology: a more
appropriate topology is given by the \emph{\'etale} topology.
One might hope that $K$ is a sheaf (i.e., behaves `like a cohomology
theory') for the \'etale topology; if so, one
could then hope for a local-to-global spectral sequence (an analog of the
Atiyah-Hirzebruch spectral sequence for topological $K$-theory) 
beginning with \'etale cohomology and ultimately converging to algebraic
$K$-theory. Indeed, the convergence properties of such a spectral sequence are
the subject of the Quillen-Lichtenbaum conjecture \cite{Licht73,QuillenICM}, which is a consequence of the Rost-Voevodsky Norm Residue theorem (see \cite{Kolster} for a recent survey).

The problem is that $K$-theory is not a sheaf for the \'etale
topology. If $E \to F$ is a $G$-Galois extension of fields,
one has
a $G$-action on $K(F)$ and 
a canonical map 
\begin{equation} \label{descmap} K(E) \to K(F)^{hG}, \end{equation} 
but this need not be an equivalence\footnote{see here: https://mathoverflow.net/questions/239393/simplest-example-of-failure-of-finite-galois-descent-in-algebraic-k-theory/240025\#240025}, contradicting \'etale descent. 
In fact, since algebraic $K$-theory satisfies Nisnevich descent (cf.\ \cite{Nis89} and
\cite[Thm.~10.8]{TT90}), the failure of 
descent along Galois extensions of commutative rings is the \emph{only}
obstruction to satisfying \'etale descent \cite[Cor.\ 4.24]{DAGXI}.

In the foundational paper \cite{Thomason85} and in the later extension
\cite{TT90}, Thomason showed that these
problems disappear after a localization, after which
maps of the form \eqref{descmap} become equivalences. 
Specifically, let $X$ be a scheme where a fixed prime number $\ell$ is invertible and suppose $X$ contains the $\ell$th roots of unity.
If $\ell \geq  5$ (as we assume for simplicity), there is a canonical element $\beta \in \pi_2 ( K(X)/\ell)$ called the
\emph{Bott element.}  
Under these assumptions, $K(X)/\ell$ inherits the structure of a ring spectrum up to homotopy and one can form
the localization $(K(X)/\ell) [\beta^{-1}]$. 

\begin{thm}[{Thomason \cite[Thm.~2.45]{Thomason85} and \cite[Thm.~11.5]{TT90}}] \label{thm:thomason}
Suppose $X$ is a noetherian scheme of finite Krull dimension 
over $\mathbb{Z}[1/\ell, \mu_\ell]$, satisfying Thomason's technical hypotheses on
the existence of  `Tate-Tsen filtrations' of uniformly
bounded length on the residue fields.\footnote{This implies that the residue fields of $X$ have
uniformly bounded $\ell$-torsion Galois cohomological dimension. To the authors' knowledge, no counterexample to the converse implication is known.}
Then the functor $Y \mapsto
(K(Y)/\ell)[\beta^{-1}]$ is a sheaf of spectra on the small \'etale site of $X$. 
Moreover, there exists a descent spectral sequence
\[  E_2^{s,2t}=H^s_{\textrm{\'et}}(X, (\widetilde{\pi}_{2t}K/\ell)[\beta^{-1}]) \cong H^s_{\textrm{\'et}} ( X , \mathbb{Z}/\ell(t)) \Longrightarrow \pi_{2t-s}
(K(X)/\ell)[\beta^{-1}]. \] The differentials run $d_n\colon E_n^{s,t}\to E_n^{s+n,t+n-1}$.
\end{thm} 

Thomason observed that there is another construction  of
Bott inverted $K$-theory \cite[\S A.14]{Thomason85} that makes no reference
to Bott elements. One can first form mod
$\ell$ $K$-theory by smashing with the Moore spectrum $S/\ell$ and then
obtain Bott-periodic $K$-theory by inverting the Adams self-map $v$ of the Moore spectrum. In the setting above we have an equivalence: \[(K(-)/\ell)[\beta^{-1}]\simeq K(X)\otimes S/\ell[v^{-1}].\] 

Now if we invert those maps which become an equivalence after smashing with
$T(1):=S/\ell[v^{-1}]$, we obtain a localization functor $L_{T(1)}$. It
follows from Thomason's theorem that $L_{T(1)}K(-)$ satisfies \'etale descent
under the above hypotheses. 

The main result of this paper is a
generalization of this statement below to the
world of structured ring spectra.
In particular, we prove the following result, a derived version of finite flat
descent of telescopically localized $K$-theory. 
\begin{thm} 
Let $A$ be an $\mathbb{E}_\infty$-ring spectrum and let $B$ be an
$\mathbb{E}_\infty$-$A$-algebra such that $\pi_*(B)$ is finite and
faithfully flat as a
$\pi_*(A)$-module. Then after telescopic localization $L_{T(n)}$ (at any
implicit prime and height $n$), algebraic
$K$-theory satisfies descent along $A \to B$. That is, one can recover
$L_{T(n)} K(A)$ as the homotopy limit
\[ L_{T(n)}  K(A) \simeq \mathrm{Tot} \left( L_{T(n)} K(B) \rightrightarrows L_{T(n)}
K(B \otimes_A B) \triplearrows \dots  \right).\]
\end{thm} 

Our methods also apply to 
certain generalizations of finite flat extensions, such as the Galois extensions
of Rognes \cite{Rognes08}. We describe our results further below. 

\subsection{Extending Thomason's rational descent argument}
As the above notation indicates, there is an infinite family of such
telescopic localization functors $\{L_{T(n)}\}$ indexed over the integers
$n\geq 0$ and primes $\ell$. 
When $n=0$,
$T(0)$-equivalences are precisely rational equivalences (for every prime
$\ell$). Thomason observed that proving the rational analog of
\Cref{thm:thomason} is actually quite easy and reduces to a transfer argument
for finite Galois extensions \cite[Thm.\ 2.15]{Thomason85}. 

We will show that
Thomason's argument for the rational case actually implies \'etale
descent for  $L_{T(1)}K(-)$ by exploiting the fact that the algebraic
$K$-theory of commutative rings does not just take values in homotopy
commutative ring spectra, but rather in (vastly more structured) $\mathbb{E}_\infty$-ring spectra. In this setting we can apply the May nilpotence conjecture \cite{MNN_nilpotence}.

Since Thomason's argument is so simple and central to the motivation of this
paper, we will recall it in a modernized form. Consider the case of a
$G$-Galois extension of fields $A \to B$. Using techniques of Merling
\cite{Mer15}, Barwick \cite{Bar15}, and Barwick-Glasman-Shah \cite{BGS15},
one constructs an $\mathbb{E}_\infty$-algebra $R = K(B; G)$ in the $\infty$-category
of $G$-spectra such that for any subgroup
$H \leq G$, we have $R^H = K(B; G)^H = K(B^{hH})$. 
The descent comparison map now becomes the classical comparison map
\begin{equation}\label{eq:comp}
 K(A)=R^G \to R^{hG}=K(B)^{hG}
 \end{equation}
from fixed points to homotopy fixed points. 
It is a general fact about $G$-spectra that the fiber of this comparison map
is a module over the \emph{cofiber} $C=K(A)/K(B)_{hG}$  of the transfer map
\[ K(B)_{hG} \to K(A), \]
which arises from restriction of scalars along $A \to B$.
The ring spectrum $C$ has an $\mathbb{E}_\infty$-structure: in the language of equivariant
stable homotopy theory, we have $C = (R \otimes \widetilde{EG})^G$.
Now for Galois extensions of fields, the induced map 
\begin{equation}\label{eq:surj}
K_0(B) \to K_0(A)\end{equation} 
is the map $\mathbb{Z} \stackrel{|G|}{\to} \mathbb{Z}$. 
It follows that $\pi_0 C \simeq \mathbb{Z}/|G|$, so that $C$ is in
particular rationally trivial. 
We use the $C$-module structure on the fiber of
\eqref{eq:comp} to obtain Thomason's equivalences:
\begin{equation}\label{eq:q-descent} K(A)\otimes \mathbb{Q}
\xrightarrow{\simeq} (K(B)^{hG})\otimes \mathbb{Q}\xrightarrow{\simeq}
(K(B)\otimes \mathbb{Q})^{hG}. \end{equation} 

These equivalences are direct consequence of the ring structures and the
rational surjectivity of \eqref{eq:surj}. Moreover, the equivalences in
\eqref{eq:q-descent} \emph{imply} that the rationalized transfer map
$K(B)_{hG}\otimes \mathbb{Q}\to K(A)\otimes \mathbb{Q}$ is an equivalence
which, in turn, implies the surjectivity in the rationalization of
\eqref{eq:surj}.  So the rational surjectivity of \eqref{eq:surj} is a \emph{necessary and sufficient} condition for the descent equivalences in \eqref{eq:q-descent}.

We now build on Thomason's argument by using the $\mathbb{E}_\infty$-structure on $C$. The proof of the May conjecture from  \cite{MNN_nilpotence} says the rational triviality of $C$ is equivalent to the triviality of $L_T C$ for \emph{every} telescopic localization of $C$. It follows that the rational surjectivity of the transfer is equivalent to the $L_{T(0)}=\mathbb{Q}$-equivalences in \eqref{eq:q-descent} which are, in turn, equivalent to those equivalences after we replace $L_{T(0)}$ with any telescopic localization. 

\subsection{Methods}
In order to consider non-Galois extensions, we will not use the technology of equivariant algebraic $K$-theory or the language of $G$-spectra in this paper.  Instead we will work with the symmetric monoidal stable $\infty$-category $\mot_A$ of non-commutative $A$-linear motives developed in \cite{BGT13, BGT14, HSS15} as a replacement for the equivariant stable homotopy category. We will now include a brief sketch of our methods. 

By construction, non-commutative motives form the universal stable $\infty$-category for studying weakly additive invariants (see \Cref{weaklyadditive}) such as $K$-theory, $THH$, or $\TC$. To be more specific, let $E$ be such a functor valued in spectra. Then the restriction of $E$ to a functor on commutative $A$-algebras canonically factorizes as $\CAlg(A)\xrightarrow{[-]}\mot_A\xrightarrow{\widetilde{E}}\mathrm{Sp},$ where $[-]$ takes tensor products of $A$-algebras to tensor products in $\mot_A$ and $\widetilde{E}$ is an exact functor uniquely determined by $E$. This allows us to restrict our attention to exact functors out of motives.

Fix $B \in \CAlg(A)$.
Consider the full subcategory $\mathcal{I}\subset\mot_A$ of all those $M\in \mot_A$ for which the augmented cosimplicial object
(in which we have suppressed the codegeneracies)
\[ M\rightarrow M\otimes [B]\rightrightarrows M\otimes [B]\otimes [B]\mathrel{\vcenter{\mathsurround0pt
                \ialign{#\crcr
                        \noalign{\nointerlineskip}$\rightarrow$\crcr
                        \noalign{\nointerlineskip}$\rightarrow$\crcr
                        \noalign{\nointerlineskip}$\rightarrow$\crcr
                }%
        }}\cdots\] 
        becomes a limit diagram after applying \emph{any} exact functor.
		  Since $\mathcal{I}$ is a $\otimes$-ideal, we know that the monoidal
		  unit $[A]$ belongs to $\mathcal{I}$ precisely when the symmetric
		  monoidal Verdier quotient $\mot_A/\mathcal{I}$ is trivial\footnote{To
		  be precise, one should first pass to suitably small subcategories
		  before taking this quotient.}, which is, in turn, equivalent to the
		  triviality of the ring spectrum
		  $R'=\hom_{\mot_A/\mathcal{I}}([A],[A])$. This ring  spectrum will play the same role as $C$ did in Thomason's argument above.
		  Since $R'$ is defined as the endomorphisms of the unit in a symmetric
		  monoidal $\infty$-category, $R'$ is canonically an
		  $\mathbb{E}_\infty$-ring.

        Using the universal properties of $K$-theory we obtain a ring map
		  $K_0(A)\to \pi_0 R'$ which, since $[B]$ belongs to $\mathcal{I}$, sends
		  anything in the image of a `transfer map' $K_0(B)\to K_0(A)$ (which
		  makes sense if there is a morphism $[B] \to [A]$)  to zero.
        In particular, if there is a surjective transfer map $K_0(B)\twoheadrightarrow K_0(A)$, then $[A]\in\mathcal{I}$.

For our localized descent results we want to show that $[A]$ lies in $\mathcal{I}$ `up to telescopic localization', which will be equivalent to asking for the desired equivalences 
\[ L_T \widetilde{E}([A])\xrightarrow{\simeq} L_T \mathrm{Tot}
\left(\widetilde{E}([B]^{\otimes \bullet +1})\right)\xrightarrow{\simeq}
\mathrm{Tot} \left(L_T \widetilde{E}([B]^{\otimes \bullet +1})\right)\] for any exact functor $\widetilde{E}$ and any telescopic localization $L_T$. This will be formalized in \Cref{sec:epsilon-nil} by saying that $[A]$ lies in the $\epsilon$-enlargement $\mathcal{I}_\epsilon$ of $\mathcal{I}$.

To see that $[A]$ lies in $\mathcal{I}_\epsilon$, we no longer need to check
the triviality of $R'$. Instead, we need to check the triviality of each of the
telescopic localizations of $R'$. Since $R'$ is an $\mathbb{E}_\infty$-ring
spectrum, the solution to May's conjecture \cite{MNN_nilpotence} implies that this
condition is equivalent to the rational triviality of $\pi_0(R')$. We can now argue as above and see that this condition follows from the existence of a rationally surjective transfer map $K_0(B)\otimes \mathbb{Q}\twoheadrightarrow K_0(A)\otimes \mathbb{Q}$.

\subsection{The $K$-theory of structured ring spectra and a conjecture of Ausoni and Rognes}

The above argument shows that one can drop some of Thomason's technical
hypotheses and still obtain \'etale descent for $L_{T(n)}K(-)$, for every
$n\geq 0$ and implicit prime $\ell$. On the one hand, when $n\geq 2$, a result of Mitchell
\cite{Mitchell90} shows $L_{T(n)}K(X)$ is trivial for every scheme $X$, so
our argument provides no information.
On the other hand, the arguments above are very
robust and one can hope that they will generalize to other contexts.

Waldhausen \cite{WalLocal} proposed such a context, namely that the $K$-theory of rings could be extended to `brave new rings'
(now called structured ring spectra). This proposal has been realized in
the work of many people (cf., e.g., \cite{EKMM97,BGT13}) and numerous tools have been developed for this
generalization. This has led to deep calculations in certain important cases. We refer to \cite{RognesICM} for a recent survey.
Note in particular that the conclusion of Mitchell's theorem entirely fails
for the $K$-theory of ring spectra. 

To further extend the analogy with algebra, Rognes \cite{Rognes08} formulated a notion of a Galois extension of $\mathbb{E}_\infty$-ring spectra generalizing the
classical notion in commutative algebra.
A fundamental example of such an extension is the complexification map $\KO \to \KU$ from real to complex topological $K$-theory. There are higher chromatic analogs of this example coming from Lubin-Tate spectra and the theory of topological modular forms.

In this setting, Ausoni and Rognes made the following descent conjecture, which we can view as the higher chromatic analog of
the statement of \Cref{thm:thomason}:

\begin{conj}[Ausoni and Rognes \cite{AR}]\label{conj:ar}
Let $A \to B$ be a $K(n)$-local $G$-Galois extension of $\mathbb{E}_\infty$-rings. 
 Let $T(n+1)$ be a telescope of a $v_{n+1}$-self map of a type $(n+1)$-complex.
Then the map 
\[ T(n+1) \otimes  K(A)  \to T(n+1) \otimes  K(B)^{hG}  \]
is an equivalence. 
\end{conj}

The following main result, which is proven in the body of the paper as \Cref{finitedesc}, will imply several important cases of \Cref{conj:ar}, but also applies to non-Galois extensions.

\begin{thm}\label{thm:main}
Suppose $A \to B$ is a morphism of $\mathbb{E}_\infty$-rings such that $B$ is a perfect
$A$-module and such that the rationalized
restriction of scalars map $K_0(B) \otimes \mathbb{Q} \to K_0(A) \otimes \mathbb{Q}$
is surjective. Let $E(-)$ be either algebraic $K$-theory $K(-)$,
non-connective algebraic $K$-theory $\mathbb{K}(-)$, topological
Hochschild homology $\THH(-)$, or topological cyclic homology $\TC(-)$ and let $L_T$ denote one of the following periodic localization functors: $L_{T(n)}$, $L_{K(n)}$, $L_n^f$, or $L_n$ (taken at an implicit prime $\ell$ for some $n\geq 0$).

Then the map 
\begin{equation} \label{intro:descmap} E(A) \to \mathrm{Tot}\left( E(B)
\rightrightarrows E(B \otimes_A B ) \triplearrows \dots \right)  \end{equation}
becomes an equivalence after 
$L_T$-localization (which can be performed either inside or outside the
totalization). Moreover, the associated $\mathrm{Tot}$/\v{C}ech-spectral sequence collapses at a finite page with a horizontal vanishing line\footnote{Here we are following the standard convention for drawing spectral sequences with Adams indexing; the $y$-axis indexes the \emph{filtration degree} $s$ and the $x$-axis indexes the \emph{total degree} $t-s$.}. 
\end{thm} 

In particular, if $A \to B$ is a $G$-Galois extension satisfying the above hypothesis on
$K_0(-) \otimes \mathbb{Q}$, then the Ausoni-Rognes Conjecture \ref{conj:ar} holds for this extension. In this case, the associated spectral sequence is the homotopy fixed point spectral sequence: \[H^s(G;\pi_t L_T E(B))\Rightarrow \pi_{t-s} L_T E(A). \]

\Cref{thm:main} reduces the localized descent problem to a question about $K_0(-)\otimes \mathbb{Q}$. This condition is relatively accessible and can be checked in many examples of interest:
\begin{thm}
	The hypotheses of \Cref{thm:main} are satisfied for the following maps of $\mathbb{E}_\infty$-ring spectra:
	\begin{itemize}
		\item Any finite \'etale cover $A\to B$ (see \Cref{finiteetalesatisfied} for a more general condition).
		\item The complexification maps of topological $K$-theory spectra: $\KO\to \KU$ or $\ko\to \ku$ (\Cref{ex:ko,ex:koconn}).
		\item The $G$-Galois extensions $E_n^{hG}\to E_n$ where $G\subset
		\mathbb{G}_n$ is a finite subgroup of the extended Morava stabilizer
		group (\Cref{cor:higher_real_transfer}).\footnote{We refer to
		\cite[Sec. 3.1]{HMS15}  for a proof that this is a global extension.}
		\item Any finite $G$-Galois extension of the following variations on topological modular forms: $\TMF[1/n], \Tmf_0(n),$ or $\Tmf_1(n)$ (\Cref{TMFcb,thm:log-tmf}), where $n\geq 1$.
		\item The extension $\tmf[1/3]\to \tmf_1(3)$ of connective topological modular forms (\Cref{ex:tmf13}).
	\end{itemize}
\end{thm}

For extensions defined by certain higher real $K$-theories one has sharper results (see \Cref{f2modulekoku}).

\subsection{Further remarks}
In the algebraic $K$-theory of \emph{connective} ring spectra, 
the theory of \emph{trace methods} is a fundamental tool.
Using it, one can try to answer the above question by combining Thomason's results
together
with comparisons with topological cyclic homology. 
However, the most interesting Galois extensions arise from nonconnective ring
spectra (in fact, all Galois extensions of connective ring spectra can be
determined in terms of pure algebra by \cite[Thm.\ 6.17]{MGal} and \cite[Ex.\ 5.5]{MM15}). 
As a result, our approach in this paper is completely different. 

We also emphasize that our methods \emph{do not} recover Thomason's spectral sequence, whose $E_2$-term is the sheaf cohomology. The convergence of that spectral sequence requires \'etale \emph{hyperdescent}, rather than the descent result we prove. It seems to us that any hyperdescent statement for non-rational localizations will require additional tools. 
Similarly, we cannot treat the case of a pro-Galois extension with profinite
Galois group. 

However, if we ignore the issue of sheaf vs.\ hypersheaf, then our methods show that
none of the technical hypotheses on $X$ imposed in Thomason's theorem are
actually necessary.  Thus, we do obtain new descent results even in the algebraic
$K$-theory of discrete rings; see also Example \ref{projectivedescent}.

Finally, we remark that another crucial aspect of Thomason's spectral sequence is the identification of the \' etale sheafified homotopy groups of $K/\ell(-)[\beta^{-1}]$ with the $\mathbb{Z}/\ell(n)$ spaced out in even degrees, so that the $E_2$-term can be written explicitly.  This identification follows from the Gabber-Suslin rigidity theorem, which we do not know the analog for in the setting of structured ring spectra and higher chromatic localizations.

\subsection{Outline}
In \Cref{sec:epsilon-nil} we introduce the notion of $\epsilon$-objects and
$\epsilon$-equivalences. This robust theory allows us to  formulate
rigorously
what it means for a homotopy limit to quickly converge modulo objects which are
invisible to chromatic homotopy theory. In \Cref{sec:nc_motives} we review the
relative theory of non-commutative motives as developed by \cite{BGT13,HSS15}.
This section is technical and can be skipped on a first reading. 
The only minor variation
in our treatment allows us to consider additive invariants which do not commute with all filtered colimits (such as $\TC$).

In \Cref{sec:descent_results}, we show how the nilpotence criterion for
$\mathbb{E}_\infty$-rings of \cite{MNN_nilpotence} provides a simple criterion
for establishing descent results.  In \Cref{sec:examples} we establish our
aforementioned examples and prove that periodically localized $K$-theory is a
sheaf for the finite flat topology on $\mathbb{E}_\infty$-ring spectra. As a
corollary, we obtain in \Cref{sec:etaledesc} an analog for spectral algebraic spaces of Thomason's result that the $K$-theory of
schemes satisfies \'etale descent after periodic localization.
\Cref{sec:appendix} by Meier, Naumann, and Noel shows the existence of finite even complexes with specific Morava $K$-theories. This might be of independent interest, but serves the immediate purpose of establishing descent for the algebraic $K$-theory of higher real $K$-theories.

\subsection*{Notation}
\begin{enumerate}
\item  
We will write $\catst$ for the $\infty$-category of idempotent-complete,
small stable $\infty$-categories and exact functors between them. 
We recall that $\catst$ has the structure of a presentable, symmetric monoidal
$\infty$-category under the Lurie tensor product. 
We refer to \cite{BGT13} for an account of the general features of $\catst$. 
\item
We let $\clg(\catst)$ denote the $\infty$-category of commutative algebra
objects in $\catst$: equivalently, this is 
the $\infty$-category of small symmetric monoidal, idempotent-complete stable
$\infty$-categories $(\mathcal{C}, \otimes, \mathbf{1})$ where $\otimes$ is
exact in each variable. The morphisms in $\clg(\catst)$ are the symmetric
monoidal exact functors. Given  an object $X \in \mathcal{C}$, we will write $\pi_k (X) = \pi_k
\hom_{\mathcal{C}}(\mathbf{1}, X)$.
\item We will let $\prls$ denote the category of presentable, stable
$\infty$-categories and cocontinuous functors between them, with the Lurie
tensor product. 
\item 
 We will also need to consider not
necessarily small,
idempotent-complete stable $\infty$-categories and exact functors between 
them, and we denote the $\infty$-category of such by $\Catst$.
Thus $\Catst$ includes both small and presentable idempotent-complete, stable
$\infty$-categories. 
\item 
Although due to set-theoretic technicalities we will not consider $\Catst$ as a symmetric monoidal $\infty$-category, we will 
abuse notation and write $\clg(\Catst)$ for the $\infty$-category of symmetric
monoidal, stable, idempotent-complete $\infty$-categories with a biexact
tensor product, and symmetric monoidal exact functors between them. 
\item Throughout this paper we will use Lurie's definitions of the  \'etale and Nisnevich topologies (see \cite[\S 1.2.3 and \S 3.7]{lurie_sag} and \cite[App.~C]{hoyois_quadratic} for further discussion).
\end{enumerate}

We write $\sp$ for the $\infty$-category of spectra and $\sp^\omega \subset
\sp$ for the subcategory of finite spectra. Given an $\mathbb{E}_\infty$-ring spectrum $R$, we write
$\perf(R)$ for the $\infty$-category of perfect (i.e., compact) $R$-modules. 
We will write $\mathbb{D}$ for the dual of an object (e.g., the
Spanier-Whitehead dual of a finite spectrum). 
Given objects $X, Y$ of a stable $\infty$-category $\mathcal{C}$, we will write
$\hom_{\mathcal{C}}(X, Y) \in \sp$ for the mapping \emph{spectrum}. 

\subsection*{Acknowledgments}
The first author benefited from a stimulating visit to the University of Oslo, and would like to thank John Rognes for sharing his hospitality and mathematical energy with equal measures of generosity.
The second and fourth authors would like to thank the Hausdorff Institute of Mathematics
for its hospitality during the trimester program on `Homotopy theory,
manifolds, and field theories.'
The second author would also like to thank the University of Copenhagen for its hospitality
during a very productive weeklong visit in 2015, and for the invitation to give a lecture
series on this work in July 2016.

We would like to thank Clark Barwick, Bhargav Bhatt,  Lars Hesselholt, Mike
Hopkins, Marc Hoyois, Jacob Lurie, Lennart Meier, Peter Scholze, and Georg Tamme for helpful
discussions. 
We would like to thank John Rognes for many helpful comments on a
draft of this paper. We thank two anonymous referees for valuable comments.
The first author was supported by Lars Hesselholt's Niels Bohr Professorship.
The second author was supported by the NSF Graduate Fellowship under grant
DGE-114415, and was a Clay Research Fellow as this paper was finished. The third author was partially supported by SFB 1085 - Higher Invariants, Regensburg. The fourth author was partially supported by the DFG grants: NO 1175/1-1 and SFB 1085 - Higher Invariants, Regensburg.

\section{$\epsilon$-nilpotence}\label{sec:epsilon-nil}

Let $\mathcal{C} \in \Catst$ be a (not necessarily small) stable, idempotent-complete $\infty$-category and let $\mathcal{T} \subset
\mathcal{C}$ be a full subcategory. 
Recall that $\mathcal{T}$ is called \emph{thick} 
if $\mathcal{T}$ is a stable subcategory which is also idempotent-complete,
so $\mathcal{T} \in \Catst$. 
Using the nilpotence theorem \cite{DHS88}, Hopkins and Smith gave in \cite[Thm.\ 
7]{HS98}
a complete classification of thick subcategories of the category of finite spectra. 
The classification of thick subcategories in general is a problem that has been
further studied in many different contexts and is closely
related to questions of nilpotence.

A key technique of this paper uses a specific enlargement $\mathcal{T}_{\epsilon}$ of a thick subcategory $\mathcal{T}\subset \mathcal{C}$.
Roughly speaking, $\mathcal{T}_{\epsilon}$ is the maximal enlargement of
$\mathcal{T}$ which is no different from $\mathcal{T}$ from the point of view
of any periodic localization. 
We will discuss 
some important examples of this construction and,
ultimately, a basic criterion that enables one to 
check whether an object belongs to $\mathcal{T}_{\epsilon}$ by performing a
\emph{rational} calculation (\Cref{nilpotenceforclg}).
All the results in the present section are fairly formal
consequences of the nilpotence
technology of \cite{DHS88, HS98}. 

\subsection{$\epsilon$-enlargements}

In this subsection, we make the basic definition. In the remaining subsections,
we will explore this further for specific choices of thick subcategories. 

\begin{definition} \label{Tplusepsilon}
 Let $\mathcal{C} \in \Catst$ and let $\mathcal{T} \subset
 \mathcal{C}$ be a thick subcategory. 
We will define several enlargements of $\mathcal{T}$. Recall that $\mathcal{C}$
is canonically tensored over finite spectra.
\begin{enumerate}
\item Given a finite spectrum $F$, define $\mathcal{T}_{F}$ to be the
smallest thick subcategory of $\mathcal{C}$ containing $\mathcal{T}$ and $\{F \otimes C\}_{C \in \mathcal{C}}$.
\item
Let $\Sigma$ be a finite set of prime numbers. 
Define thick subcategories $\mathcal{T}_{\epsilon, \Sigma}$ via 
\[ \mathcal{T}_{\epsilon, \Sigma} = \bigcap_F \mathcal{T}_F,  \]
where $F$ ranges over all finite spectra whose
$p$-localization is nontrivial for every $p \in \Sigma$. 
\item Finally, define the thick subcategory $\mathcal{T}_{\epsilon}$ via
\[ \mathcal{T}_{\epsilon} = \bigcup_{\Sigma} \mathcal{T}_{\epsilon, \Sigma},  \]
as $\Sigma$ ranges over all finite sets of prime numbers. 
We will call this the \emph{$\epsilon$-enlargement} of $\mathcal{T}$. 
\end{enumerate}
\end{definition}

The process of $\epsilon$-enlargement interacts well with exact functors.
\begin{prop} 
\label{functorialityplusepsilon}
Suppose $G\colon \mathcal{C} \to \mathcal{D}$ is a morphism in $\Catst$ (i.e., an
exact functor). Suppose $\mathcal{T} \subset \mathcal{C}$ and
$\mathcal{T}' \subset \mathcal{D}$ are thick subcategories and $G(\mathcal{T})
\subset \mathcal{T}'$. Then $G(\mathcal{T}_{\epsilon}) \subset
\mathcal{T}'_{\epsilon}$. Furthermore, for each finite set of prime numbers
$\Sigma$, we have $G( \mathcal{T}_{\epsilon, \Sigma}) \subset
\mathcal{T}'_{\epsilon, \Sigma}$.
\end{prop} 
\begin{proof} 
Let $F$ be any finite spectrum. One shows easily that $G( \mathcal{T}_F)
\subset \mathcal{T}'_F$ (because $G$ commutes with smashing with the finite
spectrum $F$) and then the remaining assertions follow formally by taking
intersections and unions. 
\end{proof} 

\begin{definition} 
\label{epsilonobject}
Let $\mathcal{C} \in \Catst$.
Suppose $\mathcal{T} = \left\{0\right\}$. 
Then write $\mathrm{Nil}_{\epsilon}(\mathcal{C}) \stackrel{\mathrm{def}}{=} \mathcal{T}_{\epsilon}$
and call it the subcategory of \emph{$\epsilon$-objects} of $\mathcal{C}$. 
In the case $\mathcal{C} = \sp$, we will call this the subcategory of
\emph{$\epsilon$-spectra.}
A morphism $f\colon X \to Y$ in ${\mathcal C}$
is an \emph{$\epsilon$-equivalence} if the cofiber is an $\epsilon$-object. 
Finally, we also write $\nil_{\epsilon, \Sigma}(\mathcal{C})$
for $\mathcal{T}_{\epsilon, \Sigma}$ for any finite set $\Sigma$ of prime numbers.
\end{definition} 

Our next result will be  \Cref{epsilonquotient}, which explains that the general
formation of ${\mathcal T}_\epsilon$ can be reduced to the special case that ${\mathcal T}=\{ 0\}$,
using Verdier quotients.

Suppose now that $\mathcal{C}$ is \emph{small}, i.e., $\mathcal{C} \in \catst$.
Suppose $\mathcal{T} \subset \mathcal{C}$ is a thick subcategory. 
Then we recall that we can form the (idempotent-complete) \emph{Verdier quotient}
$\mathcal{C}/\mathcal{T} \in \catst$ receiving an exact functor from
$\mathcal{C}$ (cf.\  \cite[\S
5.1]{BGT13} for a treatment; in particular, $\mathcal{C}/\mathcal{T}$ is the
pushout $\mathcal{C}\sqcup_{\mathcal{T}} 0$ in $\catst$). An object of
$\mathcal{C}$ belongs to $\mathcal{T}$ if and only if its image in the Verdier
quotient vanishes.

Note that \Cref{functorialityplusepsilon} implies that $\epsilon$-objects are
preserved by any exact functor in $\catst$. 
Applying this to the functor ${\mathcal C}\longrightarrow {\mathcal C}/{\mathcal T}$ shows that 
${\mathcal T}_\epsilon$ is mapped to Nil$_{\epsilon}({\mathcal C}/{\mathcal T})$, and to see the converse,
we first need an elementary lemma whose proof we leave to the reader.
We refer to \cite[Sec. 5.3]{Lur09} for the theory of
$\mathrm{Ind}$-completions of $\infty$-categories.

\begin{lemma} 
\label{thickvslocalizing}
Let $\mathcal{C} \in \catst$ and let $\mathcal{T}
\subset \mathcal{C}$ be a thick subcategory. Then $ \mathcal{T}=\mathcal{C} \cap \mathrm{Ind}(\mathcal{T}) 
\subset \mathrm{Ind}(\mathcal{C})$.
\end{lemma}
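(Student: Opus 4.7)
The plan is to prove both inclusions, with the nontrivial direction proceeding via a compactness/retract argument which is standard for this kind of statement about ind-completions.

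First, the inclusion $\mathcal{T} \subset \mathcal{C} \cap \mathrm{Ind}(\mathcal{T})$ is immediate from the canonical fully faithful embedding $\mathcal{T} \hookrightarrow \mathrm{Ind}(\mathcal{T})$ and the fact that $\mathcal{T} \subset \mathcal{C}$. So the content is the reverse inclusion $\mathcal{C} \cap \mathrm{Ind}(\mathcal{T}) \subset \mathcal{T}$.

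For this, I would take $X \in \mathcal{C}$ with $X \in \mathrm{Ind}(\mathcal{T})$ and show $X \in \mathcal{T}$. The key inputs are (i) since $\mathcal{C}$ is idempotent-complete, the Yoneda embedding identifies $\mathcal{C}$ with the full subcategory of compact objects of $\mathrm{Ind}(\mathcal{C})$; in particular $X$ is compact in $\mathrm{Ind}(\mathcal{C})$. (ii) The fully faithful inclusion $\mathrm{Ind}(\mathcal{T}) \hookrightarrow \mathrm{Ind}(\mathcal{C})$ preserves filtered colimits, so mapping spectra out of $X$ are computed the same way in both categories, and hence $X$ is also compact when viewed in $\mathrm{Ind}(\mathcal{T})$. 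Since $X \in \mathrm{Ind}(\mathcal{T})$, I can write $X \simeq \mathrm{colim}_i\, T_i$ as a filtered colimit with $T_i \in \mathcal{T}$. By compactness, the identity map $X \to X \simeq \mathrm{colim}_i T_i$ factors through some $T_{i_0}$, exhibiting $X$ as a retract of $T_{i_0}$. Because $\mathcal{T}$ is thick, hence closed under retracts in $\mathcal{C}$, we conclude $X \in \mathcal{T}$.

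The only step that requires any care is the claim that $X$ remains compact when viewed in $\mathrm{Ind}(\mathcal{T})$, but this is precisely the general fact that a fully faithful, filtered-colimit preserving inclusion $\mathrm{Ind}(\mathcal{T}) \hookrightarrow \mathrm{Ind}(\mathcal{C})$ reflects compactness. Everything else is formal from the idempotent-completeness of $\mathcal{C}$ and the thickness of $\mathcal{T}$, which is why the authors deem this lemma elementary enough to leave to the reader.
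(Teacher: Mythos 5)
Your proof is correct. The paper explicitly leaves this lemma to the reader, and your argument is the standard one: since $\mathcal{C}$ is idempotent-complete, $X$ is compact in $\mathrm{Ind}(\mathcal{C})$; the fully faithful, filtered-colimit-preserving inclusion $\mathrm{Ind}(\mathcal{T})\hookrightarrow\mathrm{Ind}(\mathcal{C})$ reflects compactness, so $X$ is compact in $\mathrm{Ind}(\mathcal{T})$ and hence a retract of some $T\in\mathcal{T}$, which lies in $\mathcal{T}$ because $\mathcal{T}$ is idempotent-complete (this is essentially the Neeman--Thomason identification of compacts in a compactly generated localizing subcategory).
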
 

\begin{prop} 
\label{epsilonquotient}
Let $\mathcal{C} \in \catst$ and let $\mathcal{T} \subset \mathcal{C}$ be a
thick subcategory. Then an object $X \in \mathcal{C}$ belongs to
$\mathcal{T}_{\epsilon}$ if and only if its image in $\mathcal{C}/\mathcal{T}$
is an $\epsilon$-object.
\end{prop} 
\begin{proof} 
The `only if' implication follows from \Cref{functorialityplusepsilon}, applied to the
canonical functor $\mathcal{C}\longrightarrow \mathcal{C}/\mathcal{T}$. 
For the converse, suppose $\Sigma$ is a finite set of prime numbers and the image $\overline{X}
\in \mathcal{C}/\mathcal{T}$ of $X$ belongs to $\mathrm{Nil}_{\epsilon,
\Sigma}(\mathcal{C}/\mathcal{T})$. 
We show that $X \in \mathcal{T}_{\epsilon, \Sigma}$. 

Let $F$ be any finite spectrum such that $F_{(p)} \neq 0$ for each $p \in \Sigma$. 
We need to show that $X$ belongs to $\mathcal{T}_F$. 
Equivalently, by \Cref{thickvslocalizing}, we need to show that $X$ belongs to $\mathrm{Ind}(\mathcal{T}_F)\subset \mathrm{Ind}(\mathcal{C})$.
In $\mathrm{Ind}(\mathcal{C})$ we can localize to obtain a cofiber sequence,
\[ X' \to X \to X'',  \]
where $X' \in \mathrm{Ind}(\mathcal{T}) \subset \mathrm{Ind}(\mathcal{C})$
while $\hom_{\mathrm{Ind}(\mathcal{C})}(T, X'') = 0$ for $T \in \mathcal{T}$.
Thus, it suffices to show that $X''$ belongs to the localizing subcategory
generated by $\{F \otimes Y\}_{Y \in \mathcal{C}}$.

Finally, we recall (cf.\ \cite[\S 5.1]{BGT13}) that the collection of $Z \in \mathrm{Ind}(\mathcal{C})$
with $\hom_{\mathrm{Ind}(\mathcal{C})}(T, Z) =0 $ for each $T \in \mathcal{T}$ can be
identified with $\mathrm{Ind}(\mathcal{C}/\mathcal{T})$
and $X''$ 
can be identified with 
the image $\overline{X} \in \mathcal{C}/\mathcal{T} \subset
\mathrm{Ind}(\mathcal{C}/\mathcal{T})$. 
So, our assumption that $\overline{X} \in \mathrm{Nil}_{\epsilon, \Sigma}(
\mathcal{C}/\mathcal{T})$ now shows that $X''$ belongs to the localizing
subcategory as desired. 
\end{proof} 

We next observe that being an $\epsilon$-object can be checked locally at one prime at a time.

\begin{prop} 
\label{spliteps}
Let $\mathcal{C} \in \Catst$. 
Fix
an object $X \in \mathcal{C}$. Then $X \in \nil_{\epsilon}(\mathcal{C})$ if and
only if the following conditions are satisfied: 
\begin{enumerate}
\item There exists $N \in \mathbb{Z}_{>0}$ such that $N = 0 \in
\pi_0\hom_{\mathcal{C}}(X, X)$. Therefore, we get a canonical decomposition $X
\simeq \bigvee_{p \mid N} X_{(p)}$ where $X_{(p)}$ is the $p$-localization of $X$. 
\item In the above decomposition, for each $p$ dividing $N$, $X_{(p)} \in \nil_{\epsilon,
\left\{p\right\}} (\mathcal{C})$.
\end{enumerate}
\end{prop} 
\begin{proof} 
We show that every $X \in \nil_{\epsilon}(\mathcal{C})$ satisfies conditions (1) and
(2), and leave the (easier) proof of the converse implication to the reader.
There is a finite set of primes $\Sigma$ such that $X \in \nil_{\epsilon, \Sigma}(\mathcal{C})$. 
Then one sees that there exists $N \in \mathbb{Z}_{>0}$, divisible only by the
prime numbers belonging to $\Sigma$,
such that $N \cdot\mathrm{id}_X = 0 $ as a self-map of $X$, via a thick subcategory
argument. In fact, $X$ belongs to the thick subcategory generated by
$\left\{ \left(\bigvee_{p\in\Sigma} S^0 /p\right) \otimes C\right\}_{p \in \Sigma, C \in \mathcal{C}}$, and any
such object has this property. 
Thus $X$ satisfies (1) and in particular decomposes into a direct sum of its $p$-localizations for $p \in
\Sigma$. 

Fix a prime number $p \in \Sigma$ and a nontrivial $p$-torsion finite complex $F$. 
It suffices to show that $X_{(p)}$ belongs to the thick subcategory generated by
$\left\{F \otimes C\right\}_{C \in \mathcal{C}}$. 
For each $q \in \Sigma \setminus \left\{p\right\}$, choose  a $q$-torsion finite complex  $F_q$. 
Then by assumption, $X$ belongs to the thick subcategory generated by the union
of 
$\left\{F \otimes C\right\}_{C \in \mathcal{C}}$ and $\left\{F_q
\otimes C\right\}_{q \in \Sigma \setminus \left\{p\right\}, C \in \mathcal{C}}$.
Applying $p$-localization annihilates the second set of objects and proves the
desired claim about $X_{(p)}$. 
\end{proof}

We will now give basic criteria for identifying $\epsilon$-objects, which shows that they are  `strongly null-isogenous' or `endomorphism-dissonant.' Here we will freely use the nilpotence technology of \cite{DHS88, HS98}.

Recall that for every $n$ and prime $p$, there exists a type $n$, finite
$p$-local spectrum $F_n$ admitting a non-trivial $v_n$-self map $f$.  Let
$T(n)=F_n[f^{-1}]$ be the mapping telescope on $f$. While the spectrum $T(n)$
depends on the choice of $F_n$, its Bousfield class does not. In other words,
the condition $T(n)_*X=0$ (for a spectrum $X$) does not depend on the choices made.

\begin{prop} 
\label{epsilonobjectend}
Let $\mathcal{C} \in \Catst$ and $X \in \mathcal{C}$.  The following are equivalent:
\begin{enumerate}
\item \label{it:x-eps} The object $X$ is an $\epsilon$-object.
\item \label{it:x-tel} The endomorphism ring spectrum $\mathrm{End}_{\mathcal{C}}(X)$ has the property that $T(n)
_*\mathrm{End}_{\mathcal{C}}(X)  = 0$ for all $n \in [0, \infty)$ and all
primes $p$.  
\item \label{it:x-kn} The endomorphism ring spectrum $\mathrm{End}_{\mathcal{C}}(X)$ has the property that $K(n)
_*\mathrm{End}_{\mathcal{C}}(X)  = 0$ for all $n \in [0, \infty)$ and all
primes $p$.  
\end{enumerate}
\end{prop} 
\begin{proof}
	First we prove (\eqref{it:x-eps}$\Rightarrow$\eqref{it:x-tel}), i.e., if $X$ is an $\epsilon$-object, then $T(n)_*\mathrm{End}_{\mathcal{C}}(X)=0$ for every $n$ and implicit prime $p$. By \Cref{spliteps}, we can suppose that $X\in \nil_{\epsilon,
\left\{p\right\}} (\mathcal{C})$.
	Now for a fixed $n\geq 0$, we choose a type $n$, finite $p$-local spectrum
	$F_n$ with a $v_n$-self map $f$ such that $T(n)=F_n[f^{-1}]$ as above. Let
	$Cf$ denote the cofiber of $f$. We will show that the class $\mathcal{D}$ consisting of all $Y\in
	\mathcal{C}$ such that $T(n)_*\hom_{\mathcal{C}}(X,Y)=0$ contains $X$.
	Evidently, $\mathcal{D}$ is a thick subcategory and by construction of $Cf$,
	\[T(n)_*\hom_{\mathcal{C}}(X,Cf\otimes Z)\simeq T(n)_*\left( Cf\otimes
	\hom_{\mathcal{C}}(X,Z)\right)=0, \quad \text{for any } Z \in \mathcal{C},\] 
	which implies that any $Cf \otimes Z$ belongs to $ \mathcal{D}$. 
	By hypothesis, $X$ belongs to the thick subcategory generated by
	$\{Cf\otimes Z\}_{Z \in \mathcal{C}}$ and thus $X \in \mathcal{D}$, so the claim follows. 

	The implication (\eqref{it:x-tel}$\Rightarrow$\eqref{it:x-kn}) holds because
	any $T(n)_*$-acyclic spectrum $W$ is also $K(n)_*$-acyclic: If $0=T(n)_*W=\pi_*
	(T(n)\otimes W)$, then one concludes $0=K(n)_*(T(n)\otimes W))\simeq
	K(n)_*(T(n))\otimes_{K(n)_*}K(n)_*(W)$. Since $K(n)_*(T(n))\neq 0$ 
	and $K(n)_*$ is a graded field, this implies that $K(n)_*(W)=0$.

For the final implication (\eqref{it:x-kn}$\Rightarrow$\eqref{it:x-eps}), suppose that the 
endomorphism ring spectrum
$\mathrm{End}_{\mathcal{C}}(X)$ is $K(n)$-acyclic for all primes $p$ and 
$0 \leq n < \infty$. First of all, observe that $\mathrm{End}_{\mathcal{C}}(X)$
is therefore a torsion spectrum. 
Breaking $X$ into a direct sum of its $p$-localizations, we may assume that
$\mathrm{End}_{\mathcal{C}}(X)$ is $p$-power torsion. We then need to show that
$X \in \nil_{\epsilon, \left\{p\right\}}(\mathcal{C})$.

Let $F$ be any finite nontrivial $p$-torsion spectrum. We will show that $X$
belongs to the thick subcategory generated by $F \otimes X$. 
Replacing $F$ by $F \otimes \mathbb{D}F = \mathrm{End}_{\sp}(F)$, we may assume $F$ is a ring spectrum
and we let $S^0 \to F$ be the unit map. We let $I \to S^0$ denote the fiber of
the unit map. 
Note that $F$ has nontrivial homology in degree zero, so $I \to S^0$ induces the zero map in
mod $p$ homology. 
We will show that there exists $n$ such that $I^{\otimes n} \otimes X \to X$ is
nullhomotopic. It will follow that $X$ is a summand of the cofiber of this
map, which belongs to the thick subcategory generated by $F\otimes X$ as desired. 

To show that there exists an $n$ such that $I^{\otimes n} \otimes X \to X$ is
nullhomotopic, we observe (cf. \cite[Sec. 2]{HS98}) that it suffices to show (by adjointness) that there exists an $n$, such
that the map of \emph{spectra}
\begin{equation}\label{eq:nilp}
I^{\otimes n} \to S^0 \to \mathrm{End}_{\mathcal{C}}(X)
\end{equation} is nullhomotopic. 
Indeed, maps of spectra $I^{\otimes n} \to \mathrm{End}_{\mathcal{C}}(X)$
correspond to maps $I^{\otimes n} \otimes X \to X$ in $\mathcal{C}$. 
By duality, we need to show that the corresponding map $S^0\to \mathrm{End}_{\mathcal{C}}(X)\otimes \mathbb{D}I^{\otimes n}$ is null for $n\gg 0$. By repeatedly applying the unit map for $\mathrm{End}_{\mathcal{C}}(X)$ we would then obtain a null-homotopic map $S^0\to\mathrm{End}_{\mathcal{C}}(X)^{\otimes n}\otimes \mathbb{D}I^{\otimes n}$. By using the multiplication map on $\mathrm{End}_{\mathcal{C}}(X)$, we see that the null-homotopy of \eqref{eq:nilp} for $n\gg 0$ is \emph{equivalent to} the \emph{smash nilpotence} of
 the map $S^0 \to \mathrm{End}_{\mathcal{C}}(X) \otimes
\mathbb{D}I$. 

By the nilpotence theorem
\cite[Thm.\ 3]{HS98}, it suffices to check this after applying $K(n)_*$ for $n \in [0,
\infty]$. For $n < \infty$, we know that 
$\mathrm{End}_{\mathcal{C}}(X)$ itself is $K(n)_*$-acyclic. For $n = \infty$,
so that $K(\infty) = H \mathbb{F}_p$, we know that $S^0 \to \mathbb{D} I$
induces the zero map in homology, so we are done here too. 
\end{proof}

\begin{cor}\label{detecteps}
Given $\mathcal{C} \in \Catst$ and an object $X\in\mathcal{C}$, the following
are equivalent: 
\begin{enumerate}
\item  
$X$ is an
$\epsilon$-object. 
\item 
For every prime $p$ and height $n\geq 0$ and
every exact functor $F\colon \mathcal{C}\rightarrow L_{T(n)}\sp$, the image of $X$
under $F$ is zero.  
\item  For every prime $p$ and height $n\geq 0$ and
every exact functor $F \colon \mathcal{C}\rightarrow L_{K(n)}\sp$, the image of $X$
under $F$ is zero.  

\end{enumerate}
\end{cor} 
\begin{proof} To see that ((1)$\Rightarrow$(2) and (3)), it suffices by \Cref{functorialityplusepsilon}
to show that there are
no nontrivial $\epsilon$-objects of $L_{K(n)} \sp$ and $L_{T(n)} \sp$. Let $C$
be a type $n + 1$ finite complex.  Then smashing with $C$ annihilates both
$L_{K(n)} \sp$ and $L_{T(n)} \sp$. This easily implies that there are no
nontrivial $\epsilon$-objects as desired. For the other direction, i.e., to
show ((2) or (3)$\Rightarrow$(1)), we use \Cref{epsilonobjectend} 
and consider the functors
$L_{K(n)}\hom_{\mathcal{C}}(X, \cdot) \colon \mathcal{C} \to L_{K(n)}
\sp$ (or the telescopic analog). \end{proof} 

\begin{remark}
	The results above indicate that our methods do not see any possible
	distinction between $T(n)-$ and $K(n)$-localizations. As a consequence, our
	descent results will be applicable to any of the following localization
	functors which we will call \emph{periodic} localization functors (and
	discuss 
	below):
	$\{L_{T(n)}, L_{K(n)}, L_n^f, L_n\}_{n\geq 0, \, p\textrm{ prime}}.$ Of course, the results for all of the telescopic localizations imply the others.
\end{remark}

\begin{example}
Here are some examples of $\epsilon$-objects.

\begin{enumerate}
\item If $\mathcal{C}=\sp^{\omega}$ is the $\infty$-category of finite spectra, then every $\epsilon$-object is $0$: Using \Cref{spliteps} and \Cref{epsilonobjectend}, it suffices to see that if $X$ is the $p$-localization of a finite spectrum with $K(n)_*(X)=0$ for all $0\leq n<\infty$,
then $X$ is contractible. Since for sufficiently large $n$, the Atiyah-Hirzebruch spectral sequence $H^*(X,\mathbb{F}_p)\Rightarrow K(n)^*(X)$ collapses, this is clear.
\item If $\mathcal{C}$ is such that for every prime $p$, the $p$-localization $\mathcal{C}_{(p)}$
is $\mod(L_nS)$ for some $n$ (possibly depending on $p$), then 
every $\epsilon$-object of $\mathcal{C}$ is zero. This is immediate from
\Cref{epsilonobjectend} and applies for example to the $E_n$-local category,
the $K(n)$-local category, the $\infty$-category of $KO$-modules, and the
$\infty$-category of $\Tmf$-modules.
\item If $\mathcal{C}=\sp$, then every $X\in \sp$ which admits the structure
of an $H\mathbb{Z}$-module spectrum and is annihilated by some $N\geq 1$ is an $\epsilon$-object.  So is  any spectrum in the thick subcategory generated by these, e.g., any spectrum with only finitely many non-zero homotopy groups, each of which is annihilated by some $N$.
\item If each mapping spectrum in $\mathcal{C}$ is an $H\mathbb{Z}$-module, for example if $\mathcal{C}$ arises from a dg-category, then the $\epsilon$-objects are exactly the objects annihilated by some $N\in\mathbb{N}$.
\item If $\mathcal{C}$ is a full subcategory of $\mathcal{D}$, then an $X\in\mathcal{C}$ is an $\epsilon$-object of $\mathcal{C}$ if and only if it is an $\epsilon$-object of $\mathcal{D}$.

\item $H \mathbb{Q}/\mathbb{Z} \in \sp$ is an example of a spectrum, even an $H\mathbb{Z}$-module, which is
$T(n)$-acyclic for every $n < \infty$ but is \emph{not} an $\epsilon$-spectrum. 
This follows because for every $N$, multiplication by $N$ is nonzero on
$H\mathbb{Q}/\mathbb{Z}$.
\end{enumerate}
\end{example}

\begin{prop} 
Suppose $\mathcal{C} \in \Catst$.
Given a thick subcategory $\mathcal{T} \subset \mathcal{C}$, we have
$(\mathcal{T}_{\epsilon})_{\epsilon} = \mathcal{T}_{\epsilon}$. In other words,
$\epsilon$-enlargement is an idempotent procedure.
\end{prop} 
\begin{proof} 
Without loss of generality, we can assume $\mathcal{C} \in \catst$ by writing
$\mathcal{C}$ as a union of small subcategories. 
Suppose $X \in (\mathcal{T}_{\epsilon})_{\epsilon}$. 
By \Cref{epsilonquotient}, we need to show that the image of $X$ 
in $\mathcal{C}/\mathcal{T}$ is an $\epsilon$-object. 
Note that the quotient map carries $\mathcal{T}_{\epsilon}$ into
$\mathrm{Nil}_{\epsilon}(\mathcal{C}/\mathcal{T})$ and therefore it carries
$(\mathcal{T}_{\epsilon})_{\epsilon}$ into 
$(\mathrm{Nil}_{\epsilon}(\mathcal{C}/\mathcal{T}))_{\epsilon}$ by
\Cref{functorialityplusepsilon}. 
As a result, we need to show that
$$(\mathrm{Nil}_{\epsilon}(\mathcal{C}/\mathcal{T}))_{\epsilon} =
\mathrm{Nil}_{\epsilon}(\mathcal{C}/\mathcal{T})$$
because this will imply that the image of $X$ in $\mathcal{C}/\mathcal{T}$ is
an $\epsilon$-object.

The upshot of this discussion 
is that, by passage from $\mathcal{C}$ to $\mathcal{C}/\mathcal{T}$, we can assume $\mathcal{T} =0 $ to begin with, and we make this
assumption. So, assume $X \in (
\mathrm{Nil}_\epsilon(\mathcal{C}))_{\epsilon}$; we show that $X$ is itself an $\epsilon$-object.  

By \Cref{detecteps}, it suffices to show that any exact functor  $F \colon \mathcal{C} \to L_{K(n)}
\sp$ annihilates $X$. As we saw above, $F$ annihilates
$\mathrm{Nil}_{\epsilon}(\mathcal{C})$ and therefore factors through 
an exact functor $\overline{F} \colon \mathcal{C}/\mathrm{Nil}_{\epsilon}(\mathcal{C})
\to L_{K(n)} \sp$. However, our assumption is that the image of $X$ in
$\mathcal{C}/\mathrm{Nil}_{\epsilon}(\mathcal{C})$ is an $\epsilon$-object and
therefore $\overline{F}$ annihilates it. 
\end{proof} 

\begin{remark} 
The collection of $\epsilon$-objects is preserved even by arbitrary
\emph{additive} functors between idempotent-complete, stable $\infty$-categories.
Compare \cite[Sec.\ 2]{GGN15} for a treatment of additive $\infty$-categories. 
In fact, if $F\colon \mathcal{C} \to \mathcal{D}$ is an additive functor, then for
each $X \in \mathcal{C}$, we obtain a canonical map of spectra
\[ \tau_{\geq 0}\hom_{\mathcal{C}}(X, X)  \to \tau_{\geq 0}
\hom_{\mathcal{D}}(F(X), F(X)), \]
which carries the unit in $\pi_0$ to the unit. 
Using \Cref{epsilonobjectend} (and the fact that smashing with $K(n)$ annihilates 
bounded-above spectra for $n > 0$) one sees easily that $F(X)$ is also an
$\epsilon$-object.
\end{remark}

We next check that $\epsilon$-enlargements are invisible to all finite chromatic localizations.
We recall briefly the theory of \emph{finite localizations} (cf.\ \cite{Miller92}).
Let $\mathcal{C}$ be a presentable stable $\infty$-category. 
Fix a prime $p$ and a height $n$. Then we define a localization functor $L^f_n\colon
\mathcal{C}\to \mathcal{C}$ as follows. When $n =0$, then $L^f_n$ is simply rationalization. When
$n > 0$,  choose a nontrivial type $n+1$, finite complex $F$.
 Consider the full subcategory of all $X \in \mathcal{C}$ such that $F \otimes X$ is
contractible. This subcategory is closed under all limits and colimits and is
the image of a colimit-preserving, idempotent functor $L^f_n\colon \mathcal{C} \to \mathcal{C}$. 
We note that $L^f_n$ annihilates any object in $\mathcal{C}$ of the form $F
\otimes X$, and that as a result $L^f_n$ annihilates
$\nil_{\epsilon}(\mathcal{C})$. Note also that one has $L_n$ localization
functors on $\mathcal{C}$, given as localization with respect to smashing with
Morava $E$-theory 
$E_n$, and $L_n = L_n L_n^f$. 
We have the following:

\begin{prop} 
\label{Lnepsilon}
Let $\mathcal{C}$ be a presentable, stable $\infty$-category and let
$\mathcal{T} \subset \mathcal{C}$ be a thick subcategory.  For any prime $p$
and height $n$, we have $L_n^f \mathcal{T} = L_n^f \mathcal{T}_{\epsilon}$,
i.e., $\mathcal{T} $ and $ \mathcal{T}_{\epsilon}$ have the same image under the
finite localization functor $L_n^f$.
The same holds for $L_n$ replacing $L_n^f$.
\end{prop} 
\begin{proof} 
We have an inclusion 
$L_n^f \mathcal{T} \subset  L_n^f \mathcal{T}_{\epsilon}$, so it suffices to
show the other inclusion. 
Let $\Sigma$ be any finite set of prime numbers and let $F$ be a finite
torsion spectrum such that $F_{(q)} \neq 0$ for any $q \in \Sigma$ and such
that $L_n^f F  = 0$. Then, it follows that $\mathcal{T}_{\epsilon, \Sigma}
\subset \mathcal{T}_F$ where we  use the notation  of \Cref{Tplusepsilon}. 
One sees now that $L_n^f \mathcal{T}_F = L_n^f \mathcal{T}$, so we are done. 
The assertion for $L_n$ follows because $L_n = L_n L_n^f$. 
\end{proof}

We now consider the case where 
$\mathcal{C} \in \clg(\Catst)$. 
That is,
$(\mathcal{C}, \otimes, \mathbf{1})$ is a not necessarily small, symmetric monoidal,
idempotent-complete stable $\infty$-category, where the tensor structure $\otimes$ is exact in each variable. 
Recall that a thick subcategory $\mathcal{I} \subset \mathcal{C}$ is called a
\emph{thick $\otimes$-ideal} if for each $X \in \mathcal{I} $ and $ Y \in
\mathcal{C}$, we have $X \otimes Y \in \mathcal{I}$. 
\begin{prop}
If $\mathcal{I} \subset \mathcal{C}$ is a thick $\otimes$-ideal, then so is
its $\epsilon$-enlargement $\mathcal{I}_\epsilon\subset\mathcal{C}$.
\end{prop} 
\begin{proof} 
We need to show that if $Y \in \mathcal{C}$, then the exact functor $(-)\otimes Y\colon
\mathcal{C} \to \mathcal{C}$ preserves $\mathcal{I}_{\epsilon}$. This follows
from \Cref{functorialityplusepsilon} because the functor preserves
$\mathcal{I}$. 
\end{proof}

\subsection{$\epsilon$-nilpotent towers}
We recall some basic definitions when working with \emph{towers} of objects. 
We refer to \cite{HPS} or \cite[\S 3]{Ma15} for more details. 
\begin{definition} 
\label{quickconv}
Suppose $\mathcal{C} \in \Catst$.
Let $\tow(\mathcal{C})$ denote the $\infty$-category 
$\mathrm{Fun}(\mathbb{Z}_{\geq 0}^{\mathrm{op}}, \mathcal{C})$
of \emph{towers} in $\mathcal{C}$, and observe that $\tow(\mathcal{C})\in\Catst$.
We will often abuse notation and denote a tower by $\left\{X_i\right\}$, suppressing the maps $X_{i+1} \to X_i$ from the notation.

Recall the following definitions of two important thick subcategories of $\tow(\mathcal{C})$:
\begin{enumerate}
\item  
Let $\townil(\mathcal{C}) \subset \tow(\mathcal{C})$ denote the full subcategory
spanned by those towers $\left\{X_i\right\}$ such that there exists $N \in
\mathbb{Z}_{\geq 0}$ such that the maps $X_{i+N} \to X_i$ are nullhomotopic for
all $i \in \mathbb{Z}_{\geq 0}$.  
Such towers will be called \emph{nilpotent.}
\item Let $\towconst(\mathcal{C})$ denote the thick subcategory of
$\tow(\mathcal{C})$ generated by $\townil(\mathcal{C})$ and the constant towers. 
Such towers will be called \emph{quickly converging.}
\end{enumerate}
\end{definition}

We will now consider \Cref{Tplusepsilon} for these thick subcategories of the
$\infty$-category of towers. 
Recall that given a cosimplicial diagram $F^\bullet  \in \fun( \Delta, \mathcal{C})$,
the \emph{totalization} $\mathrm{Tot}(F^\bullet)$ is the homotopy inverse limit
of $F$ (if it exists). We will also use the \emph{partial totalizations} $\mathrm{Tot}_i( F^\bullet) =
\varprojlim_{\Delta^{\leq i}} F$ of the restriction of $F$ to the subcategory
$\Delta^{\leq i} = \left\{[0], [1], \dots, [i]\right\} \subset \Delta$. 

\begin{definition}

Suppose $\mathcal{C} \in \Catst$.
\begin{enumerate}
\item 
Define $\towen(\mathcal{C})$ via 
\[ \towen(\mathcal{C}) = \left(
\mathrm{Tow}^{\mathrm{nil}}(\mathcal{C}) \right)_{\epsilon} .\]
A tower is \emph{$\epsilon$-nilpotent} if it belongs
to $\towen(\mathcal{C})$.
\item Let $\left\{X_i\right\}$ be a tower in $\mathcal{C}$. Suppose given an
object $X \in \mathcal{C}$ and a  map
of towers $\left\{X\right\} \to \left\{X_i\right\}$ (if $\mathcal{C}$ admits
inverse limits, then this is equivalent to giving a map $X \to \varprojlim
X_i$). We will say that this map exhibits  $X$ as an \emph{$\epsilon$-nilpotent limit}
of the tower $\left\{X_i\right\}$ if the cofiber tower $\left\{X_i/X\right\}$
belongs to $\towen(\mathcal{C})$.
\item Let $X^\bullet \in \fun(\Delta^+, \mathcal{C})$ be an augmented
cosimplicial object of $\mathcal{C}$. 
We say that it is an \emph{$\epsilon$-nilpotent limit diagram} if the natural map of towers
$\{X^{-1}\} \to \left\{\mathrm{Tot}_i( X^\bullet)\right\}$ exhibits $X^{-1}$ as an
$\epsilon$-nilpotent limit of the target.

\end{enumerate}
\end{definition} 

\begin{prop} 
The  $\epsilon$-nilpotent limit diagrams form a thick subcategory of $\fun(\Delta^+, \mathcal{C})$. 
\end{prop}
\begin{proof} 
Consider the functor $F\colon \fun( \Delta^+, \mathcal{C}) \to \tow(\mathcal{C})$ which to $X^\bullet$
associates the cofiber tower $\{ \mathrm{Tot}_i(X^\bullet)/X^{-1}\}$. This is an exact functor, and by definition, 
the $\epsilon$-nilpotent limit diagrams are exactly those objects
such that this cofiber tower belongs to $\mathrm{Tow}^{\epsilon,\mathrm{nil}}(\mathcal{C})$; in particular,
they form a thick subcategory. \end{proof}

Using \Cref{functorialityplusepsilon}, we easily obtain the functoriality of
$\epsilon$-nilpotent towers. 

\begin{prop} 
\label{epsilonnilpfunctoriality}
Let $\mathcal{C}, \mathcal{D} \in \Catst$ and let $F\colon
\mathcal{C} \to \mathcal{D}$ be an exact functor. 
Clearly, $F$ induces an exact functor $F_*\colon \tow(\mathcal{C}) \to \tow(\mathcal{D})$. 
Then: 
\begin{enumerate}
\item $F_*(\townil(\mathcal{C})) \subset \townil(\mathcal{D})$.
\item $F_*( \towen(\mathcal{C})) \subset \towen(\mathcal{D})$.
\item Suppose $\left\{X_i\right\}$ is a tower in $\mathcal{C}$ and
$\left\{X\right\} \to \left\{X_i\right\}$ is a map of towers. Suppose that this
map exhibits $X$ as an $\epsilon$-nilpotent limit of $\left\{X_i\right\}$. Then the
induced map $\left\{F(X)\right\} \to 
\left\{F(X_i)\right\}$
exhibits $F(X)$ as an $\epsilon$-nilpotent limit of $\left\{F(X_i)\right\}$. 
\item Let $X^\bullet \in \fun( \Delta^+, \mathcal{C})$ be an augmented
cosimplicial object. Suppose $X^\bullet$ is an $\epsilon$-nilpotent limit diagram.  
Then the augmented cosimplicial object $F(X^\bullet) \in \fun(\Delta^+,
\mathcal{D})$ is an $\epsilon$-nilpotent limit diagram.
\end{enumerate}
\end{prop} 

For our purposes, it will be important to 
know that $\epsilon$-nilpotent limit diagrams turn into actual limit diagrams after
periodic localization. 
\begin{prop}\label{prop:epsilon-limit}
Suppose $\mathcal{C}$ is a presentable stable $\infty$-category and $X^\bullet \in
\fun( \Delta^+, \mathcal{C})$ is an $\epsilon$-nilpotent limit diagram. 
Let $L^f_n$ be any finite localization functor (associated to a prime $p$ and a 
height $n$). 
Then:
\begin{enumerate}
\item  
The map 
\[ X^{-1} \to \mathrm{Tot}(X^{\bullet})  \]
is an $\epsilon$-equivalence. 
\item The localized Tot tower
$\left\{L_n^f\mathrm{Tot}_i(X^\bullet)\right\}$ is quickly converging. 
\item
The maps
\[ L^f_n X^{-1} \to L^f_n \mathrm{Tot}( X^\bullet) \to \mathrm{Tot}( L^f_n
X^{\bullet})  \]
are equivalences.
\end{enumerate}
\end{prop} 
\begin{proof} 
We consider items (1) and (3) first.
Setting $\{ Y_i\} := \{ \mathrm{Tot}_i(X^\bullet)/X^{-1}\}$, one reduces to showing that if $\left\{Y_i\right\} \in \towen(\mathcal{C})$,
then 
$\varprojlim Y_i$ is an $\epsilon$-object and 
\[ L^f_n \varprojlim Y_i \simeq \varprojlim L^f_n Y_i \simeq 0.  \]

In fact, consider the two exact functors
\[ F_1, F_2\colon \tow(\mathcal{C}) \to \mathcal{C}, \quad F_1( \left\{Y_i\right\})
= L_n^f\varprojlim_i Y_i, \quad F_2(\left\{Y_i\right\}) = \varprojlim_i L_n^f
Y_i . \]
Clearly if $\left\{Y_i\right\} \in \townil(\mathcal{C})$, then
$F_1(\left\{Y_i\right\}) = F_2(\left\{Y_i\right\}) = 0$. 
It follows from \Cref{functorialityplusepsilon} that if 
$\left\{Y_i\right\} \in \towen(\mathcal{C})$, then $F_1(\left\{Y_i\right\}),
F_2(\left\{Y_i\right\})$ are both $\epsilon$-objects of $\mathcal{C}$. Since
they are both $L_n^f$-local, however, it follows that they must be contractible. 
Similarly, considering the functor $F_3 \colon 
\tow(\mathcal{C}) \to \mathcal{C}$ given by $ F_3( \left\{Y_i\right\}) =
\varprojlim Y_i$, we conclude that $\varprojlim Y_i$ is an $\epsilon$-object. 

To prove item (2), it suffices to show that if $\left\{Y_i\right\} \in
\towen(\mathcal{C})$, then the tower $\left\{L_n^f Y_i\right\}$ belongs to $ 
\townil(\mathcal{C})$. This follows from 
\Cref{Lnepsilon}, which implies that $\townil(\mathcal{C})$ and
$\towen(\mathcal{C})$ have the same image under $L_n^f$. 
\end{proof}

\subsection{$(A, \epsilon)$-nilpotent objects}
Let $\mathcal{C} \in \clg(\Catst)$ be a symmetric monoidal, stable,
idempotent-complete $\infty$-category with biexact tensor product and let $A \in
\mathrm{Alg}(\mathcal{C})$. 

\newcommand{\anil}{\mathrm{Nil}^A}
\newcommand{\anile}{\mathrm{Nil}^{A, \epsilon}}

\begin{definition}[Cf. {\cite[Def. 3.7]{Bou79}}] 
The subcategory $\anil \subset \mathcal{C}$ is the thick
$\otimes$-ideal of $\mathcal{C}$ generated by $A$. 
\end{definition} 

We refer to \cite{MNN15i} for a detailed treatment of the theory of
$A$-nilpotence and to \cite{MGal} for some applications to analogs of faithfully
flat descent theorems. 
As before, we can define an $\epsilon$-version of the above following the
same pattern. 
\begin{definition} 
Define $ \anile = (\anil)_{\epsilon}$  as in \Cref{Tplusepsilon}
and call this the subcategory of \emph{$(A, \epsilon)$-nilpotent objects} in
$\mathcal{C}$. 
\end{definition}

We recall the following result. Although the idea is surely classical, 
we refer to \cite[Prop.\ 4.7]{MNN15i} for a modern  exposition. 
Consider the augmented cobar construction  $\cbaug(A)\colon \Delta^+ \to
\mathcal{C}$. The underlying cosimplicial object 
is the cobar construction $\cb(A) \in \fun(\Delta, \mathcal{C})$
\[ A \rightrightarrows A \otimes A \triplearrows \dots,  \]
and the augmentation is from the unit.

\begin{prop} 
Let $\mathcal{C} \in \clg(\Catst)$  and let $A \in
\mathrm{Alg}(\mathcal{C})$. 
Suppose $X \in \mathcal{C}$ is $A$-nilpotent. 
Then the augmented cosimplicial object $\cbaug(A) \otimes X$ is a limit diagram
and the associated $\mathrm{Tot}$ tower is quickly converging. 
\end{prop} 

As a consequence, we can deduce an $\epsilon$-version of the above. 
\begin{prop} 
\label{nilpcobareplimit}
Let $\mathcal{C} \in \clg(\Catst)$ and let $A \in
\mathrm{Alg}(\mathcal{C})$. 
If $X \in \mathcal{C}$ is $(A, \epsilon)$-nilpotent, then  
the augmented cosimplicial object $\cbaug(A) \otimes X$ is an
$\epsilon$-nilpotent limit diagram.  
\end{prop} 
\begin{proof} 
We have an exact functor $\mathcal{C} \to \tow(\mathcal{C})$ which sends
$$X \mapsto \left\{\mathrm{cofib}(X \to \mathrm{Tot}_i( \cb(A) \otimes
X))\right\}.$$ 
This carries $\anil$
into $\townil(\mathcal{C})$  by the previous proposition. 
Therefore, it carries $\anile$ into $\towen(\mathcal{C})$ by
\Cref{functorialityplusepsilon}, which completes the proof. 
\end{proof} 

\section{Noncommutative motives}\label{sec:nc_motives}

\newcommand{\nmot}{\mathcal{N}\mathrm{Mot}}

In this section, we will set up more machinery needed for our descent theorems
in algebraic $K$-theory. These will take place in an appropriate 
$\infty$-category 
of non-commutative motives
which, following \cite{BGT13}, is universal 
as a target for certain invariants of $\infty$-categories with some
additional structure. 

\subsection{General motives}

We begin by reviewing the point of view on connective algebraic $K$-theory described in
the papers of Blumberg-Gepner-Tabuada \cite{BGT13, BGT14} and its
generalization to the setting of stable
$\infty$-categories linear over a fixed base, which has been developed by
Hoyois-Scherotzke-Sibilla \cite{HSS15}. 
Our treatment will essentially follow theirs. 
For our purposes, however, it will be necessary to work with a slight variant since we
want to consider invariants of $\infty$-categories that do not necessarily
commute with filtered colimits, such as topological cyclic homology. 
This will not change the essential ideas. 

As before, we consider the $\infty$-category $\catst$ of small, idempotent-complete, stable $\infty$-categories and
exact functors between them. 
Using the Lurie tensor product, $\catst$ is a symmetric monoidal
$\infty$-category. 
One has a lax symmetric monoidal functor
\[ K(-)\colon   \catst \to \sp \]
given by (connective) $K$-theory. 
In the setup of \cite{BGT13, BGT14}, one constructs a presentable, stable
symmetric monoidal $\infty$-category
$\mathrm{Mot}$ of \emph{non-commutative motives.}
This receives a symmetric monoidal functor ${\mathcal U}_{\mathrm{add}} \colon \catst \to \mathrm{Mot} 
$, and given $\mathcal{C} \in \catst$, we can identify
$K(\mathcal{C})$  with the mapping spectrum from the unit
into the object ${\mathcal U}_{\mathrm{add}}(\mathcal{C} ) \in \mathrm{Mot}$. 
The $\infty$-category $\mathrm{Mot}$ satisfies a 
\emph{universal} property for
receiving maps from $\catst$ that
satisfy certain properties. 
We briefly review a general form of their construction (cf.\  \cite[\S 5]{BGT14}).

Let $\mathcal{C}$ be a small, pointed, symmetric monoidal $\infty$-category and let
$\mathfrak{A} \subset \fun( \Delta^1 \times \Delta^1, \mathcal{C} )$ be a
full subcategory. 
Suppose that: 
\begin{enumerate}
\item  
$\mathfrak{A}$ is closed under tensoring with objects in
$\mathcal{C}$. 
\item
The tensor product of a zero object in $\mathcal{C}$ with any object
is a zero object. 
\end{enumerate}
\begin{definition} 
Let $\mathcal{D}$ be a presentable, stable $\infty$-category. A functor
$F\colon \mathcal{C} \to \mathcal{D}$ is called \emph{$\mathfrak{A}$-admissible} if the following hold:
\begin{enumerate}
\item  Let $0 \in \mathcal{C}$ be a zero object. Then $F(0) \in
\mathcal{D}$ is a zero object.
\item Any diagram $\Delta^1 \times \Delta^1 \to \mathcal{C}$ that belongs to
$\mathfrak{A}$ is carried by $F$ to a pushout square in $\mathcal{D}$. 
\end{enumerate}
Let $\fun^{\mathrm{adm}}(\mathcal{C}, \mathcal{D}) \subset
\fun(\mathcal{C}, \mathcal{D})$ denote the
$\infty$-category of $\mathfrak{A}$-admissible functors from $\mathcal{C}$ to
$\mathcal{D}$. 
\end{definition}

We will now recall the construction of the universal presentable, stable $\infty$-category receiving an $\mathfrak{A}$-admissible
functor from $\mathcal{C}$, which we will denote $\mot_{\mathcal{C},
\mathfrak{A}}$ below. 

\begin{cons}We consider the Yoneda embedding $\mathcal{C} \hookrightarrow
\mathcal{P}(\mathcal{C})$. We recall (cf.\ \cite[Cor.~4.8.1.12]{Lur16} and
\cite{GlasmanDay}) that the target 
inherits a symmetric monoidal structure via \emph{Day convolution.} 
Similarly, the $\infty$-category $\mathcal{P}_{\mathrm{Sp}}(\mathcal{C})$ of
presheaves of spectra inherits a 
bicocontinuous symmetric monoidal structure via
the tensor product
\[ \mathcal{P}_{\mathrm{Sp}}(\mathcal{C})  \simeq \mathcal{P}(\mathcal{C})
\otimes \mathrm{Sp}. \]
We have a canonical symmetric monoidal functor $\mathcal{C} \to
\mathcal{P}_{\mathrm{Sp}}(\mathcal{C})$.
Let the image of $x \in \mathcal{C}$ in 
$\mathcal{P}_{\mathrm{Sp}}(\mathcal{C})$ be denoted $h_x$; these are compact
generators of $\mathcal{P}_{\mathrm{Sp}}(\mathcal{C})$.

We consider the localizing subcategory $\mathcal{I} \subset
\mathcal{P}_{\mathrm{Sp}}(\mathcal{C})$ generated 
by the following two sets of objects: 
\begin{enumerate}
\item  $h_0$, where $0 \in \mathcal{C}$ is a zero object. 
\item For each square
\[ \xymatrix{
a \ar[d] \ar[r] & b \ar[d] \\
c \ar[r] &  d
}\]
which belongs to $\mathfrak{A}$,
the cofiber of $h_b \sqcup_{h_a} h_c \to h_d$.
\end{enumerate}

Our hypotheses on $\mathfrak{A}$ and $\mathcal{C}$ imply that $\mathcal{I}$ is
actually a localizing $\otimes$-ideal and that it is compactly generated. 
Define the symmetric monoidal $\infty$-category $\mot_{\mathcal{C},
\mathfrak{A}}$ of \emph{$(\mathcal{C}, \mathfrak{A})$-motives} as the
localization of $\mathcal{P}_{\mathrm{Sp}}(\mathcal{C})$ at the class of
morphisms $X \to 0$ where $X \in \mathcal{I}$. 
\end{cons}

We note the following 
basic properties of $\mot_{\mathcal{C}, \mathfrak{A}}$: 
\begin{prop} 
\label{axiomaticmotive}
\begin{enumerate}
\item \label{it:monoidal} 
$\mot_{\mathcal{C}, \mathfrak{A}}$ is a presentable, symmetric monoidal stable
$\infty$-category with a bicocontinuous tensor product $\otimes$.
\item   \label{it:admiss}
There is a symmetric monoidal, $\mathfrak{A}$-admissible functor $\mathcal{C} \to \mot_{\mathcal{C},
\mathfrak{A}}$ and its image consists of compact objects of
$\mot_{\mathcal{C}, \mathfrak{A}}$. 
In particular, the unit is compact in $\mot_{\mathcal{C}, \mathfrak{A}}$. 
\item \label{it:adjoint}
Let $\mathcal{D}$ be any presentable stable $\infty$-category. Then there is a natural
equivalence
\[ \mathrm{Fun}^L ( \mot_{\mathcal{C}, \mathfrak{A}},\mathcal{D}) \simeq
\mathrm{Fun}^{\mathrm{adm}}(\mathcal{C}, \mathcal{D})  , \]
where $\mathrm{Fun}^L$ denotes cocontinuous functors. 
If, moreover, ${\mathcal D}$ is presentably symmetric monoidal, then
symmetric monoidal functors
correspond under this equivalence.

\end{enumerate}

\end{prop} 
\begin{proof} 
The first claim follows from the construction of $\mot_{\mathcal{C}, \mathfrak{A}}$ as the accessible localization of $\mathcal{P}_{\mathrm{Sp}}(\mathcal{C})$ at a localizing $\otimes$-ideal.
For the second claim, we note that the functor $\mathcal{C} \to \mathcal{P}_{\mathrm{Sp}}(\mathcal{C})$ is
symmetric monoidal and so is the localization functor 
$\mathcal{P}_{\mathrm{Sp}}(\mathcal{C}) \to \mot_{\mathcal{C}, \mathfrak{A}}$.
Therefore, their composite is a symmetric monoidal functor. Now the Yoneda functor $\mathcal{C} \to
\mathcal{P}_{\mathrm{Sp}}(\mathcal{C})$ takes values in compact objects and, since we are localizing at a class of morphisms generated by maps with compact source and target,
the localization $\mathcal{P}_{\mathrm{Sp}}(\mathcal{C}) \to
\mathrm{Mot}_{\mathcal{C}, \mathfrak{A}}$ preserves compact  objects.
For the final claim, we have  natural equivalences for any presentable stable
$\infty$-category $\mathcal{D}$,
\[ \fun^L( \mathcal{P}_{\mathrm{Sp}}(\mathcal{C}), \mathcal{D}) 
\simeq \fun^L ( \mathcal{P}(\mathcal{C}), \mathcal{D}) 
\simeq \fun(\mathcal{C}, \mathcal{D}).
\]
Here one sees that the subcategory $\fun^{\mathrm{adm}}(\mathcal{C},
\mathcal{D})$ of $\mathfrak{A}$-admissible functors corresponds to the
subcategory of
$\fun^L( \mathcal{P}_{\mathrm{Sp}}(\mathcal{C}), \mathcal{D}) $
which carries the localizing subcategory $\mathcal{I}$ into $0$. 
This proves the result. 
\end{proof}

\subsection{The $\infty$-categories $\mot_{\kappa}(\mathcal{R})$}

We consider the symmetric monoidal $\infty$-category $\catst$ of small stable,
idempotent-complete $\infty$-categories with the Lurie tensor product. 
This is a compactly generated, presentable $\infty$-category (cf.\ \cite[Cor.~4.25]{BGT13}) with a bicocontinuous tensor product.
The compact objects are closed under the tensor
product \cite[Prop.~5.2]{BGT14}. 
\begin{definition}
Given a small symmetric monoidal stable $\infty$-category $\mathcal{R} \in
\mathrm{CAlg}( \catst)$, consider its $\infty$-category $\md_{\mathcal{R}}(\catst)$ of modules in $\catst$, which we will call the $\infty$-category of (small) \emph{$\mathcal{R}$-linear
$\infty$-categories.}

By construction, $\md_{\mathcal{R}}(\catst)$ is a
\emph{compactly generated}  presentable symmetric monoidal $\infty$-category with the relative tensor product $-\otimes_\mathcal{R}-$ commuting with colimits in each variable.  The generators are of the form $\mathcal{R} \otimes
\mathcal{C}$, where $\mathcal{C} \in \catst$ is compact. It follows that there are also internal mapping objects $\Map_\mathcal{R}(-,-)$ in $\md_{\mathcal{R}}(\catst)$.  
\end{definition}

\begin{example} 

Since the unit $\mathcal{R}\in \md_{\mathcal{R}}(\catst)$ corepresents the functor which extracts the underlying $\infty$-groupoid of an object $\mathcal{M}$ of $\md_\mathcal{R}(\catst)$, it follows that the underlying $\infty$-groupoid of $\Map_\mathcal{R}(\mathcal{M},\mathcal{N})$ identifies with the space of maps $\mathcal{M}\rightarrow \mathcal{N}$ in  $\md_{\mathcal{R}}(\catst)$.

\end{example}

\begin{cons}
The symmetric monoidal $\infty$-category $\md_{\mathcal{R}}(\catst)$ is
closed, so we can regard it as enriched over itself. As a
result, we can extract a 2-category from it, as follows. Namely, for
$\mathcal{M},\mathcal{N}\in\md_{\mathcal{R}}(\catst)$, we define the category
of morphisms from $\mathcal{M}$ to $\mathcal{N}$ to be the \emph{homotopy
category}  of
the underlying $\infty$-category of the internal mapping object
$\Map_\mathcal{R}(\mathcal{M},\mathcal{N})\in\md_{\mathcal{R}}(\catst)$,
cf. \cite{lurie_cobordism}[Def. 2.3.13].  \end{cons}

Thus, for a morphism $f\colon \mathcal{M}\rightarrow\mathcal{N}$ in
$\md_{\mathcal{R}}(\catst),$ it makes sense to ask, for example, whether $f$
is right adjointable.

\begin{remark} 
The condition that a morphism $f$ in $\md_{\mathcal{R}}(\catst)$ be right
adjointable is generally stronger than the condition that the underlying
functor of $f$ admit a right adjoint (we need the right adjoint to be
$\mathcal{R}$-linear), though the conditions are equivalent if every object of
$\mathcal{R}$ is dualizable.  To avoid ambiguity, we will use the term \emph{$\mathcal{R}$-linear right adjoint} for the notion coming from the 2-category $\md_{\mathcal{R}}(\catst)$.  For more discussion of this issue, see \cite{Gaits} and \cite[Rem.~ D.1.5.3]{lurie_sag}.
\end{remark} 

Now we recall a class $\mathfrak{A}$ of objects of
$\mathrm{Fun}(\Delta^1\times\Delta^1,\md_{\mathcal{R}}(\catst))$ which
corresponds to the class of \emph{split-exact sequences} of \cite{BGT13, BGT14}, or in other terms to the notion of \emph{semi-orthogonal decomposition} in the theory of triangulated categories (cf.\ \cite[\S 7.2]{lurie_sag}).
Compare \cite[Def.~5.3]{HSS15} for a discussion in the $\mathcal{R}$-linear
setting. 

\begin{definition}
Let $\mathcal{R} \in \clg( \catst)$ and let
$$\mathcal{X} = (\mathcal{M}\overset{i}{\longrightarrow} \mathcal{N}\overset{p}{\longrightarrow}\mathcal{P})$$
a sequence in $\md_{\mathcal{R}}(\catst)$ with null composite.  Note that the space of null-homotopies of any map in $\catst$ is either empty or contractible, so this can be interpreted either as structure or a condition.

Say that $\mathcal{X}$ is a \emph{split-exact sequence} if the following conditions hold:
\begin{enumerate}
\item The functors $i$ and $p$ both have $\mathcal{R}$-linear right adjoints, say $i_r$ and $p_r$, respectively.
\item The unit map $1\rightarrow i_r\circ i$ is an equivalence, i.e., $i$ is fully faithful.
\item The counit map $p\circ p_r\rightarrow 1$ is an equivalence, i.e., $p_r$ is fully faithful.
\item 
The sequence of natural transformations $i \circ i_r \to \mathrm{id}_{\mathcal{N}} \to p_r \circ
p$ is a cofiber sequence of functors $\mathcal{N} \to \mathcal{N}$. Note that
one has a canonical nullhomotopy of the composite because $\mathcal{M} \to
\mathcal{P}$ is the zero functor. 
\end{enumerate}
\end{definition}

In such a situation, it is easy to see that $\mathcal{X}$ is both a fiber
sequence and a cofiber sequence in $\md_{\mathcal{R}}(\catst)$ (and a Verdier
quotient in $\catst$), and the same is true of the sequence obtained by passing to right adjoints.  
We will need to know that 
the $\mathcal{R}$-linear tensor product respects split-exact sequences.
Compare also \cite[Lem. 5.5]{BGT14}.

\begin{lemma} 
Let $\mathcal{M}\to \mathcal{N} \to \mathcal{P}$ be a split-exact sequence
of $\mathcal{R}$-linear $\infty$-categories and let $\mathcal{C}$ be
any $\mathcal{R}$-linear $\infty$-category. Then the sequence
$\mathcal{C}\otimes_{\mathcal{R}}\mathcal{M}\to
\mathcal{C}\otimes_{\mathcal{R}}\mathcal{N}
\to\mathcal{C}\otimes_{\mathcal{R}}\mathcal{P}$ is a split-exact sequence too.
\end{lemma}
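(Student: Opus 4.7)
My approach is to verify the four conditions defining a split-exact sequence directly after applying $(-) \otimes_{\mathcal{R}} \mathcal{C}$, exploiting the fact that this operation comes from the bicocontinuous symmetric monoidal structure on $\md_{\mathcal{R}}(\catst)$ and is therefore functorial for the full 2-categorical structure introduced just above the statement.

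First, I would observe that adjunctions between $\mathcal{R}$-linear $\infty$-categories can be tensored: given an adjoint pair $(f, f_r)$ in $\md_{\mathcal{R}}(\catst)$ with unit $\eta$ and counit $\varepsilon$, the pair $(f \otimes_{\mathcal{R}} \mathrm{id}_{\mathcal{C}},\, f_r \otimes_{\mathcal{R}} \mathrm{id}_{\mathcal{C}})$ is again adjoint, with unit $\eta \otimes_{\mathcal{R}} \mathrm{id}_{\mathcal{C}}$ and counit $\varepsilon \otimes_{\mathcal{R}} \mathrm{id}_{\mathcal{C}}$; the triangle identities follow formally from the original ones. Applying this to $(i, i_r)$ and $(p, p_r)$ gives condition (1) for the tensored sequence, and $\mathcal{R}$-linearity of the adjoints is automatic since every morphism produced lives in $\md_{\mathcal{R}}(\catst)$ from the start. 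Conditions (2) and (3), i.e.\ fully faithfulness of $i$ and $p_r$, then follow immediately because tensoring an equivalence of 2-cells with $\mathrm{id}_{\mathcal{C}}$ remains an equivalence.

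The main subtlety is condition (4), that the cofiber sequence of endo-2-cells $i \circ i_r \to \mathrm{id}_{\mathcal{N}} \to p_r \circ p$ survives tensoring. For this I would use the internal mapping object. Tensoring with $\mathcal{C}$ induces, via the closed symmetric monoidal structure on $\md_{\mathcal{R}}(\catst)$, an $\mathcal{R}$-linear (hence exact) functor of stable $\infty$-categories
\[
\Map_{\mathcal{R}}(\mathcal{N}, \mathcal{N}) \longrightarrow \Map_{\mathcal{R}}(\mathcal{N} \otimes_{\mathcal{R}} \mathcal{C},\; \mathcal{N} \otimes_{\mathcal{R}} \mathcal{C}),
\]
and exact functors of stable $\infty$-categories preserve cofiber sequences. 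Applying this functor to the cofiber sequence in $\Map_{\mathcal{R}}(\mathcal{N}, \mathcal{N})$ produces the required cofiber sequence of endofunctors of $\mathcal{N} \otimes_{\mathcal{R}} \mathcal{C}$, using that composition of 1-morphisms is compatible with tensoring (so $(i i_r) \otimes \mathrm{id}_{\mathcal{C}}$ agrees with $(i \otimes \mathrm{id}_{\mathcal{C}})(i_r \otimes \mathrm{id}_{\mathcal{C}})$, and similarly for $p_r p$). The main obstacle is just locating this 2-categorical viewpoint cleanly; once one accepts that $(-) \otimes_{\mathcal{R}} \mathcal{C}$ acts coherently on adjunctions, 2-cells, and cofibers of 2-cells in $\md_{\mathcal{R}}(\catst)$, each of the four conditions transports essentially by inspection.
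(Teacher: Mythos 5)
Your proposal is correct and follows essentially the same route as the paper's proof: conditions (1)–(3) are handled by observing that $-\otimes_{\mathcal{R}}\mathcal{C}$ is a 2-functor and hence preserves adjunctions (and equivalences of 2-cells), while condition (4) is handled by passing to the internal mapping object $\Map_{\mathcal{R}}(\mathcal{N},\mathcal{N}) \to \Map_{\mathcal{R}}(\mathcal{N}\otimes_{\mathcal{R}}\mathcal{C}, \mathcal{N}\otimes_{\mathcal{R}}\mathcal{C})$, which is $\mathcal{R}$-linear exact and therefore preserves cofiber sequences.
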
 
\begin{proof}
The tensor product $-\otimes_\mathcal{R}\mathcal{C}\colon \md_{\mathcal{R}}(\catst)
\to \md_{\mathcal{R}}(\catst)$ is canonically an enriched functor. Therefore,
it can be applied not only to $\mathcal{R}$-linear functors, but
also to natural transformations of functors: it has the
structure of a 2-functor.  In particular, it preserves adjunctions, and hence it preserves the first three conditions in the definition of split-exact sequence.  

To check the last condition, we observe that if $F' \to F \to F''$ is a cofiber
sequence of $\mathcal{R}$-linear functors $\mathcal{C} \to \mathcal{D}$ for
$\mathcal{C}, \mathcal{D} \in \md_{\mathcal{R}}(\catst)$, then 
\[ F' \otimes_{\mathcal{R}} \mathcal{E} \to F \otimes_{\mathcal{R}}
\mathcal{E} \to F'' \otimes_{\mathcal{R}}
\mathcal{E}  \]
is a cofiber 
sequence of $\mathcal{R}$-linear functors $\mathcal{C} \otimes_{\mathcal{R}} \mathcal{E} \to
\mathcal{D} \otimes_{\mathcal{R}} \mathcal{E}$ for any $\mathcal{E} \in
\md_{\mathcal{R}}(\catst)$. 
This follows from the fact that 
the map of spaces 
$$\hom_{\md_{\mathcal{R}}(\catst)}(\mathcal{C}, \mathcal{D})
\to\hom_{\md_{\mathcal{R}}(\catst)}(\mathcal{C} \otimes_{\mathcal{R}}
\mathcal{E}, \mathcal{D} \otimes_{\mathcal{R}}\mathcal{E})$$
arises by taking spaces of objects from a morphism
\[ \mathrm{Map}_{\mathcal{R}}(\mathcal{C}, \mathcal{D}) \to
\mathrm{Map}_{\mathcal{R}}(\mathcal{C} \otimes_{\mathcal{R}} \mathcal{E}, \mathcal{D}
\otimes_{\mathcal{R}} \mathcal{E})  \in \md_{\mathcal{R}}(\catst), \]
i.e., an $\mathcal{R}$-linear exact functor, 
which in particular preserves cofiber sequences on underlying
$\infty$-categories. 
\end{proof}

For $\mathcal{R} \in \clg(\catst)$, 
we now would like to construct an $\infty$-category of
$\mathcal{R}$-linear non-commutative motives from $\md_{\mathcal{R}}(
\catst)$, but we need to make a minor technical detour since $\catst$ is not
essentially small. 
Choose a regular cardinal $\kappa$. Then the  
$\kappa$-compact objects in $\md_{\mathcal{R}}( \catst)$ are closed 
under pushouts, retracts, and $\mathcal{R}$-linear tensor products (cf.\  
\cite[Prop.~5.2]{BGT14}).
For any specific statement (e.g., descent-theoretic assertion), we will end up assuming $\kappa$ is taken large enough such
that our required statement takes place entirely in the world of $\kappa$-compact
objects.
\begin{definition} 
\label{weaklyadditive}
Fix $\mathcal{R} \in \clg(\catst)$ and fix a regular cardinal $\kappa$.
Let $\md_{\mathcal{R}}^{\kappa}( \catst) \subset \md_{\mathcal{R}}( \catst)$
denote the full subcategory spanned by the $\kappa$-compact objects, 
or an equivalent small model.
\begin{enumerate}
\item   
The $\infty$-category $\mot_{\kappa}(\mathcal{R})$ of \emph{non-commutative $\mathcal{R}$-motives} is defined
to be $\mathrm{Mot}_{\md^{\kappa}_{\mathcal{R}}(\catst), \mathfrak{A}}$ where
$\mathfrak{A}$ is the collection of split-exact sequences in
$\md^{\kappa}_{\mathcal{R}}(
\catst)$. 
When $\mathcal{R} = \sp^{\omega}$ and $\kappa = \aleph_0$, then this is
equivalent to the definition of \cite{BGT13} in view of \cite[Prop.
5.6]{HSS15}. Of course, the definition of $\mot_\kappa(\mathcal{R})$ depends on the choice of
$\kappa$. 
\item 
When $R$ is an $\mathbb{E}_\infty$-ring, we will write $\mot_\kappa(R) = \mot_\kappa( \perf(R))$.
\item
Given a $\kappa$-compact $\mathcal{R}$-linear $\infty$-category $\mathcal{M}$, we write
$\uadd(\mathcal{M}) \in \mot_{\kappa}(\mathcal{R})$ for its image and let $K_0'(\mathcal{M}) = \pi_0 \hom_{\mot_{\kappa}(\mathcal{R})}(
\uadd(\mathcal{R}), \uadd(\mathcal{M}))$.
\item A \emph{weakly additive invariant} of ($\kappa$-compact) $\mathcal{R}$-linear $\infty$-categories
is a functor $\md^{\kappa}_{\mathcal{R}}( \catst) \to \mathcal{D}$, where $\mathcal{D}$
is a presentable stable $\infty$-category, which is $\mathfrak{A}$-admissible
for $\mathfrak{A}$ as in part $(1)$. We find that any weakly additive invariant canonically
factors through $\mot_{\kappa}(\mathcal{R})$ by \Cref{axiomaticmotive}. We adopt the term {\em weakly} additive here to remind the reader that commutation with filtered colimits is not stipulated.
\end{enumerate}
\end{definition} 

To conclude this section, we relate $K_0'$ with algebraic $K$-theory.

\begin{cons}
Let $\mathcal{M}$ be an $\mathcal{R}$-linear $\infty$-category.  Let
$\Ex(\mathcal{M})$ denote the $\mathcal{R}$-linear $\infty$-category of cofiber sequences $(X\rightarrow Y\rightarrow Z)$ in $\mathcal{M}$.  
More formally, 
$\Ex(\mathcal{M}) \subset \fun(\Delta^1 \times \Delta^1, \mathcal{M})$ is the subcategory
of those cocartesian diagrams of the following form:
\[ \xymatrix{
X \ar[d] \ar[r] & Y \ar[d]  \\
0 \ar[r] &  Z.
}\]

Note that, because $\mathcal{M}$ is stable,  $\Ex(\mathcal{M}) \simeq \fun(\Delta^1, \mathcal{M})$ via $(X \to Y
\to Z) \mapsto (X \to Y)$.
We have the following split-exact sequence
of $\mathcal{R}$-linear $\infty$-categories
$\mathcal{M} \to \Ex(\mathcal{M}) \to \mathcal{M}$, where: 
\begin{itemize}
\item $\mathcal{M} \to \Ex(\mathcal{M})$ is the functor $X \mapsto
(X = X\to 0)$. This has a right adjoint which sends $(X \to Y\to Z)$ to $X$.
\item $\Ex(\mathcal{M})\to \mathcal{M}$ is the functor $(X \to Y\rightarrow Z)
\mapsto Z$. This has a right adjoint which sends $Z$ to $( 0 \to Z =  Z)$.
\end{itemize}

\begin{remark} 
Note that $\Ex(\mathcal{M}) \simeq \Ex(\sp^\omega) \otimes \mathcal{M} \in
\catst$, so that if $\mathcal{M} \in \md_{\mathcal{R}}(\catst)$ is
$\kappa$-compact, then so is $\Ex(\mathcal{M})$. 
To see this, 
we use the fact that $\Ex(\mathcal{M}) \simeq \fun(\Delta^1, \mathcal{M})$ is
the $\infty$-category of compact objects in $\fun(\Delta^1,
\mathrm{Ind}(\mathcal{M})) \simeq \fun( \Delta^1, \sp) \otimes
\mathrm{Ind}(\mathcal{M})$, where the latter tensor product is taken in the
$\infty$-category $\prls$. Compare \cite[Prop. 2.2.6]{Lurierotation}. 
\end{remark} 

It follows that the functors $\Ex(\mathcal{M}) \rightrightarrows
\mathcal{M}$ given by $(X \to Y\to Z) \mapsto X$ and $(X \to Y\to Z) \mapsto Z$ induce
an equivalence
\[ \uadd(\Ex(\mathcal{M})) \simeq \uadd(\mathcal{M}) \times \uadd(\mathcal{M})
\in \mot_{\kappa}(\mathcal{R}).  \]

\end{cons}

\begin{cons}
\label{basichomomorphism}
Let $\mathcal{M}$ be a $\kappa$-compact $\mathcal{R}$-linear $\infty$-category.  
Construct a natural homomorphism \begin{equation} \label{twoK0} K_0(\mathcal{M}) \to
K_0'(\mathcal{M}), \end{equation} which preserves the unit when $\mathcal{M}=\mathcal{R}$, as follows. 

Given $X \in \mathcal{M}$, we obtain an $\mathcal{R}$-linear functor
$\mathcal{R} \to \mathcal{M}$ sending the unit object to $X$, which defines an
element in $K_0'(\mathcal{M})$. 
Given a cofiber sequence $X\to Y \to Z $, we need to show that the class 
of $Y$
in $K_0'(\mathcal{M})$ is equal to the sum of the classes of $X$ and $Z$. 
By applying the functor of projection to the `middle term,' it suffices to check that the class of the object $(X\rightarrow Y\rightarrow Z)$ in $\Ex(\mathcal{M})$ is the sum of the classes $(X = X\to 0)$ and $(0\to Z = Z)$.  But by the above equivalence in $\mot_{\kappa}(\mathcal{R})$, we can check this after projection to the outer terms, where it is obvious.
\end{cons}

Using the techniques of \cite{BGT13, HSS15}, one can in fact show that $K_0 \simeq
K_0'$. 
Since we will not need this, we omit the proof.

\section{Abstract descent results}\label{sec:descent_results}

In this brief but central section, we describe how the use of
$\mathbb{E}_\infty$-structures enables one to prove abstract descent and
$\epsilon$-nilpotence results in a symmetric monoidal, stable
$\infty$-category. Our basic tool is \Cref{nilpotenceforclg} below.
Throughout, we use the following notation.

\begin{definition} 
Given 
$\mathcal{C}\in\mathrm{CAlg}({\Catst})$ and $M \in \mathcal{C}$, let 
$\langle M \rangle^{\otimes}\subset {\mathcal C}$ denote the thick $\otimes$-ideal generated by
$M$ and 
$\langle M \rangle^{\otimes}_{\epsilon}$ its $\epsilon$-enlargement. 
\end{definition}

\begin{thm} 
\label{nilpotenceforclg}
Suppose $\mathcal{C}\in\mathrm{CAlg}({\Catst})$ with unit $\mathbf{1}$ and $R \in
\clg(\mathcal{C})$. Moreover, suppose there exists $M \in \mathcal{C}$ and a map
$M \to R$ in $\mathcal{C}$ such that the
image of $(\pi_0 M)  \otimes \mathbb{Q} \to (\pi_0 R) \otimes \mathbb{Q}$
contains the unit. Then $R \in \langle M\rangle^{\otimes}_{\epsilon}$.
\end{thm} 

\begin{proof} 
Without loss of generality, we may assume $\mathcal{C}$ is small by writing
$\mathcal{C}$ as a union of small subcategories. 
Let $N$ be a positive integer such that $(\pi_0 M)[N^{-1}] \to
(\pi_0 R)[N^{-1}]$ has image containing the unit. 
Let $\Sigma$ be the set of primes dividing $N$. We will show that in fact 
$R$ belongs to $\langle M\rangle^{\otimes}_{\epsilon, \Sigma}$. 

To see this, let $T$ be any finite spectrum 
whose $p$-localizations for $p \in \Sigma$ are all nontrivial. We need to show that
$R$ belongs to the thick $\otimes$-ideal generated by $M$ and $T\otimes
\mathbf{1}$. 
Let $\mathcal{J}\subset\mathcal{C}$ denote this thick $\otimes$-ideal. 
We can then form the Verdier quotient $\mathcal{C}/\mathcal{J}$, and
equivalently we need to show that the image $\overline{R}$ of $R$ in
$\mathcal{C}/\mathcal{J}$ is
zero. 
This is equivalent to  showing that the 
$\mathbb{E}_\infty$-ring $B = \mathrm{Hom}_{\mathcal{C}/\mathcal{J}}(
\mathbf{1}, \overline{R})$
is contractible: in fact, that will imply that the unit map $\mathbf{1} \to
\overline{R}$ is nullhomotopic, so that $\overline{R} = 0$. 
Note also that $$T \otimes B = T \otimes 
\mathrm{Hom}_{\mathcal{C}/\mathcal{J}}(
\mathbf{1}, \overline{R})
= 
\mathrm{Hom}_{\mathcal{C}/\mathcal{J}}(
\mathbf{1}, \overline{R} \otimes T) = 
0,
$$ since smashing with $T$ annihilates the
$\infty$-category $\mathcal{C}/\mathcal{J}$. 

We now prove that $B$ is contractible. 
By hypothesis, there is a map $f\colon M \to R$ in
$\mathcal{C}$ whose image in
$\pi_0 \otimes \mathbb{Z}[N^{-1}]$ contains the unit. 
In other words, there is a map $g\colon  \mathbf{1} \to M$ such that the composite
$f\circ g = N^k\in\pi_0 R$  for some $k\ge 0$.
After taking the Verdier quotient, $M$ and hence $f$ map to zero, so we find that $N^k=0$ in $\pi_0 B$.
This implies that we have $H\mathbb{Q}\otimes B=0$ and that for all primes
$p$ not dividing $N$, we also have $H\mathbb{F}_p\otimes B=0$.
For each $p \mid N$, however, we claim that we have
$H\mathbb{F}_p \otimes B = 0$ as well. This follows because
we have $B \otimes T = 0$, while $T$ has nontrivial mod $p$ homology.
Using the May nilpotence conjecture (cf. \cite[Thm.~A]{MNN_nilpotence}
for a proof) applied to the unit
in $B$, we conclude that $B
= 0$. 
\end{proof}

We obtain the following consequence for $(A, \epsilon)$-nilpotence. 
\begin{thm} 
\label{maincriterionepsilonnilp}
Suppose $\mathcal{C}\in\mathrm{CAlg}(\Catst)$, $R \in \clg(\mathcal{C})$ and $A\in\mathrm{Alg}(\mathcal{C})$. Suppose that there exists an $A$-module $M$
and a map $M \to R $ in $\mathcal{C}$ such that the image of $\pi_0 M \otimes
\mathbb{Q} \to
\pi_0 R \otimes \mathbb{Q}$ contains the unit. 
Then $R \in \anile$.
In particular, $\cbaug(A) \otimes R$ is an $\epsilon$-nilpotent limit diagram. 
\end{thm} 
\begin{proof} 
\Cref{nilpotenceforclg} implies that $R\in\langle M\rangle^\otimes_\epsilon$. Since $M$ is
an $A$-module, we have $M\in\langle A\rangle^\otimes$, and conclude that $R\in\langle A\rangle_\epsilon^\otimes=\mathrm{Nil}^{A,\epsilon}$. The final claim follows from \Cref{nilpcobareplimit}. 
\end{proof}

We can apply this to obtain an $\epsilon$-nilpotence descent result in
$\mathrm{Mot}_{\kappa}(\mathcal{R})$ in the sense of the previous section. 

\begin{thm} 
\label{mainmotdesc}
Let $\mathcal{R}\in\mathrm{CAlg}(\catst)$ be a small symmetric monoidal,
idempotent-complete stable $\infty$-category and let
$\mathcal{A}\in\mathrm{Alg}(\mathrm{Mod}_{\mathcal{R}}(\catst))$ be an $\mathcal{R}$-linear
monoidal $\infty$-category. Suppose that there exists  an $\mathcal{R}$-linear functor
$\mathcal{A} \to \mathcal{R}$ whose image on $K_0(-) \otimes \mathbb{Q}$
contains the unit. 
Choose a regular cardinal $\kappa$ such that $\mathcal{A}$ is
$\kappa$-compact in $\md_{\mathcal{R}}( \catst)$.
Then the augmented cosimplicial object in $\mot_{\kappa}(\mathcal{R})$,
\[ \uadd(\mathcal{R}) \to \left( \uadd(\mathcal{A}) \rightrightarrows
\uadd(\mathcal{A}
\otimes_{\mathcal{R}}
\mathcal{A}) \triplearrows \dots \right)  \]
is an $\epsilon$-nilpotent limit diagram. 
\end{thm} 
\begin{proof} 
We apply \Cref{maincriterionepsilonnilp} with
$\mathcal{C}:=\mot_{\kappa}(\mathcal{R})$,
$R:=\uadd(\mathcal{R})\in\mathrm{CAlg}(\mathcal{C})$ the unit of $\mathcal{C}$, and $M:=A:=
\uadd(\mathcal{A})\in\mathrm{Alg}(\mathcal{C})$. Then the map $(\pi_0 M\to \pi_0 R)=
(K_0'(\mathcal{A})\to K_0'(\mathcal{R}))$ is seen to be rationally surjective (equivalently,
has image containing the unit) using the map in \Cref{basichomomorphism} and our assumption on $K_0$.
\end{proof}

 To conclude this section, we now establish a result
 that gives a much stronger conclusion about the comparison
 map as above in the special case in which one has a further assumption on the
 image in $\pi_0$ of the map $M\to R$.

\begin{thm} 
\label{Fpmodulespectrumresult}
Suppose $\mathcal{C}$ is a presentable, symmetric monoidal stable
$\infty$-category where the tensor is bicontinuous, and the unit $\mathbf{1}$ is compact. 
Suppose $A \in \mathrm{Alg}(\mathcal{C})$ is dualizable in $\mathcal{C}$. 
Suppose that there exists an $A$-module $M$ and a map $M \to \mathbf{1}$ in
$\mathcal{C}$ such
that the image of  $\pi_0M\to \pi_0\mathbf{1}$ contains the prime $p$.
Then, for any $X \in \mathcal{C}$, the fiber of 
\[ X \to \mathrm{Tot}( \cb(A) \otimes X)   \]
has the structure of an $H\mathbb{F}_p$-module. 
\end{thm} 
\begin{proof} 
We will freely use the language of acyclizations (or cellularizations) and localizations with
respect to a dualizable object, for which
we refer to \cite[\S~2--3]{MNN15i} for an exposition. 
We use $\Map$ for internal mapping objects in $\mathcal{C}$, which
exist since $\mathcal{C}$ is presentable and the tensor product is
bicocontinuous. 
Let $V_A, U_A \in \mathcal{C}$  be the $A$-acyclization and
$A^{-1}$-localization of $\mathbf{1}$, so that we have a cofiber sequence
\[ V_A \to \mathbf{1} \to U_A,  \]
such that $A \otimes U_A = 0$ and such that $V_A$ belongs to the localizing
$\otimes$-ideal generated by $A$. We then have
\[ \mathrm{Tot}(\cb(A) \otimes X ) \simeq \Map_{\mathcal{C}}(V_A, X),  \]
because both sides give the $A$-completion of $X$. In particular, the fiber of $X \to
\mathrm{Tot}( \cb(A) \otimes X)$ can be identified with
the internal mapping object
$\Map_{\mathcal{C}}( U_A, M)$. 

We observe now that $U_A$ is an $\mathbb{E}_\infty$-algebra in $\mathcal{C}$, as the
$A^{-1}$-localization of $\mathbf{1}$. 
Moreover, by assumption $\pi_0 U_A$ is an $\mathbb{F}_p$-algebra. It follows
that the underlying $\mathbb{E}_2$-algebra of $\hom_{\mathcal{C}}( \mathbf{1}, U_A)$ is
an $\mathbb{E}_2$-algebra over $\mathbb{F}_p$ by the Hopkins-Mahowald theorem that the
free $\mathbb{E}_2$-algebra with $p = 0$ is $H \mathbb{F}_p$. See
\cite[Thm.~4.18]{MNN_nilpotence} or \cite[Thm.~5.1]{AB14} for recent expositions of this
result. 
By adjunction, it follows that $U_A \in \mathcal{C}$ is a module over
$H\mathbb{F}_p$, and therefore the fiber 
$\Map_{\mathcal{C}}(  U_A, X)$ is one, too. 
\end{proof}

\section{Examples and applications}\label{sec:examples}

In this section, we will give the primary examples and applications of our descent
results (in particular, \Cref{mainmotdesc}), to algebraic finite flat
extensions of $\mathbb{E}_\infty$-rings and to many 
Rognes-style Galois extensions of $\mathbb{E}_\infty$-rings.

\begin{thm} 
\label{finitedesc}
Let $A $ be an $\mathbb{E}_\infty$-ring. Let $B$ be an $\mathbb{E}_2$-algebra
in the $\infty$-category of $A$-modules. Suppose that $B$ is a perfect
$A$-module and the canonical map $K_0(B) \otimes \mathbb{Q} \to K_0 (A) \otimes
\mathbb{Q}$ has image containing the unit. 
Assume $\kappa$ is such that $\perf(B) $ is $\kappa$-compact
in $\md_{\perf(A)}( \catst)$. 	

Let $F$ be any weakly additive invariant of $\kappa$-compact $A$-linear $\infty$-categories (e.g., 
$K$-theory, non-connective/Bass $K$-theory, homotopy $K$-theory of
$\mathbb{Z}$-linear $\infty$-categories \cite{Tab15}, $\THH$, and $\TC$) taking values in a presentable stable $\infty$-category
$\mathcal{D}$. Then the augmented cosimplicial diagram
\[ F(\perf(A)) \to \left( F(\perf(B)) \rightrightarrows F(\perf(B \otimes_A B)) \triplearrows
\dots \right)  \]
is an $\epsilon$-nilpotent limit diagram. 
In particular, the canonical map 
in $\mathcal{D}$
\[ F(\perf(A)) \to  \mathrm{Tot}\left( F(\perf(B)) \rightrightarrows F(\perf(B
\otimes_A B)) \triplearrows
\dots \right) \]
is an $\epsilon$-equivalence, and the associated $\mathrm{Tot}$ tower becomes
quickly convergent (\Cref{quickconv}) after any periodic localization.
\end{thm} 
\begin{proof} 
We apply \Cref{mainmotdesc} with $\mathcal{R}:=\mathrm{Perf}(A)$, $\mathcal{A}:=
\mathrm{Perf}(B)$ (which is monoidal as $B$ is an $\mathbb{E}_2$-algebra) and the forgetful functor $\mathcal{A}\to\mathcal{R}$, which is 
$\mathcal{R}$-linear. 
Note that the functor $B' \mapsto \perf(B')$ is a symmetric monoidal
functor from the $\infty$-category of $A$-algebras to the $\infty$-category
$\md_{\perf(A)}(\catst)$ (cf.\ \cite[Rem.~4.8.5.17]{Lur16} for a treatment in the
presentable setting).
We conclude that
\[ \uadd( \mathrm{Perf}(A)) \to \left( \uadd(\mathrm{Perf}(B)) \rightrightarrows
\uadd(\mathrm{Perf}(B \otimes_A B)) \triplearrows
\dots \right)  \]
is an $\epsilon$-nilpotent limit diagram in $\mathrm{Mot}_{\kappa}(A)$. The given $F$ factors through
an exact functor $\mathrm{Mot}_\kappa(A)\to\mathcal{D}$, and the result
now follows by
\Cref{epsilonnilpfunctoriality}, part (4) and \Cref{prop:epsilon-limit}, part (2).
\end{proof} 

\begin{remark}\label{rem:connective}
We note that if the hypotheses of \Cref{finitedesc} are satisfied for $A \to
B$, then they are satisfied for $A' \to B \otimes_A A'$ for any
$\mathbb{E}_\infty$-ring
$A'$ under $A$. 
\end{remark}

\subsection{Algebraic examples}
We now show that the desired hypothesis of rational surjectivity in
\Cref{finitedesc} is satisfied for a finite flat
extension of $\mathbb{E}_\infty$-ring spectra. In particular, algebraic $K$-theory, after
any periodic localization, is a sheaf on the finite flat site. Restricted to classical schemes, it is then a sheaf even with respect to the $fppf$-topology \cite[Tag 05WM]{stacks-project}.

We need a lemma first. In the affine case, a more precise result is available, due to Swan \cite[Prop.~10.2]{swan}.

\begin{lemma}\label{algebraicunit}

Let $X$ be a quasi-compact and quasi-separated scheme and $P$ a perfect complex of ${\mathcal O}_X$-modules. If the Euler characteristic 
of $P$ has constant value $n\in\mathbb{Z}$, then the class $[P]-n\in K_0(X)$ is nilpotent. It follows that, if the Euler characteristic of $P$
is nowhere vanishing, then the class $[P]\in K_0(X)\otimes\mathbb{Q}$ is a unit.
\end{lemma}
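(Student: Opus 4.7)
The plan is to deduce the second assertion formally from the first, and to prove the first by reducing to the case of a Noetherian scheme of finite Krull dimension.

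For the second statement, I would begin by noting that $\chi(P)\colon |X|\to\mathbb{Z}$ is locally constant since $P$ is perfect, and by quasi-compactness of $X$ it takes only finitely many values. Assuming $\chi(P)$ is nowhere vanishing, I would use its level sets to decompose $X$ as a finite clopen disjoint union $X = X_1\sqcup\cdots\sqcup X_r$ with $\chi(P)|_{X_j}=n_j\neq 0$, giving $K_0(X)\otimes\mathbb{Q}\cong\prod_j K_0(X_j)\otimes\mathbb{Q}$. Applying the first part of the lemma on each $X_j$ would yield $[P]|_{X_j}=n_j+\nu_j$ with $\nu_j$ nilpotent, so $[P]|_{X_j}=n_j(1+n_j^{-1}\nu_j)$ is a unit in $K_0(X_j)\otimes\mathbb{Q}$, whence $[P]$ is a unit in $K_0(X)\otimes\mathbb{Q}$.

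For the nilpotence statement, I would reduce to the Noetherian finite Krull dimension case via Thomason-Trobaugh's absolute Noetherian approximation: write $X = \varprojlim X_\alpha$ as a cofiltered limit of schemes of finite type over $\mathbb{Z}$ with affine transition maps. Their approximation theorem for perfect complexes lets me descend $P$ to a perfect $P_\alpha$ on some $X_\alpha$, and after refining the index I can arrange that $\chi(P_\alpha) = n$ is still constant. Since $K_0(\mathrm{Perf}(-))$ is continuous along such limits, it is then enough to show $[P_\alpha] - n$ is nilpotent in $K_0(X_\alpha)$. For this, I would invoke Grothendieck's result in SGA 6 (Exp. VI): the $\gamma$-filtration on $K_0(\mathrm{Perf}(X_\alpha))$ is finite, so the augmentation ideal $\ker(K_0(X_\alpha)\to H^0(X_\alpha,\mathbb{Z}))$ is nilpotent, and this ideal contains $[P_\alpha] - n$. (An alternative is to cover $X_\alpha$ by finitely many affines and apply Swan's affine version \cite[Prop.~10.2]{swan} directly, patching through Zariski descent.)

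The main obstacle I anticipate is the bookkeeping in the descent step: one has to verify carefully that $P$ together with the constancy condition on $\chi(P)$ descends to a finite stage, and to invoke continuity of $K_0(\mathrm{Perf}(-))$ along the cofiltered system. These are standard consequences of Thomason-Trobaugh theory but not entirely trivial. An attempt to avoid approximation and work directly on the qcqs $X$ via a Mayer-Vietoris argument on a finite affine cover runs into the difficulty that nilpotence degrees are not obviously controlled under gluing, since elements in the image of the boundary map from $K_1$ of an intersection need not satisfy any a priori nilpotence bound.
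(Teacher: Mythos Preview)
Your argument via Noetherian approximation and the $\gamma$-filtration is plausible (though the precise SGA~6 statement you invoke deserves a careful check: finiteness of the $\gamma$-filtration is typically stated under hypotheses like an ample family of line bundles, not bare Noetherian plus finite dimension). The deduction of the unit statement from the nilpotence statement matches the paper's.

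The paper's proof of nilpotence, however, is much more direct and avoids approximation entirely. The key observation you are missing is that $x=[P]-n$ does not merely become \emph{nilpotent} on small opens---it actually \emph{vanishes}. Over a local ring every perfect complex is quasi-isomorphic to a bounded complex of finite free modules, so $[P_p]=\chi(P_p)=n$ in $K_0(\mathcal{O}_{X,p})\cong\mathbb{Z}$; by continuity of $K_0$ along filtered colimits of rings, $x$ then vanishes in $K_0(U)$ for some affine open $U\ni p$. Quasi-compactness gives a finite cover $X=U_1\cup\cdots\cup U_r$ with $x|_{U_i}=0$. Now a standard support argument (via the exact sequences $K_0(\perf_{Z_i}(X))\to K_0(X)\to K_0(U_i)$ with $Z_i=X\setminus U_i$, and the fact that tensor product of complexes supported on $Z_i$ and $Z_j$ is supported on $Z_i\cap Z_j$) shows that the product of the $r$ ideals $\ker(K_0(X)\to K_0(U_i))$ lands in $K_0(\perf_\emptyset(X))=0$, whence $x^r=0$.

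So your concern that ``nilpotence degrees are not obviously controlled under gluing'' does not arise: one is not gluing nilpotent elements but rather elements that are zero on each piece. This is exactly the direct Mayer--Vietoris route you dismissed. Your approach trades this elementary observation for heavier machinery (approximation, $\lambda$-ring theory), which works but is unnecessary.
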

\begin{proof}
The class $x:=[P]-n$ vanishes at every residue field of $X$, so  there is a
(finite) open cover $X=\bigcup_{i=1}^r U_i$ such that $x$ vanishes in every
$K_0(U_i)$. By Zariski descent in $K$-theory and induction on $r$, we find that
$x^{r}=0$. For the final claim, one reduces to $X$ being connected, and then the nilpotence of $[P]-n$ (for some $0\neq n\in\mathbb{Z}$) implies that
$[P]=n\left(1+\frac{[P]-n}{n}\right)$ is a rational unit.
\end{proof}

\begin{prop} 
\label{finiteetalesatisfied}
Let $A \to B$ be a morphism of $\mathbb{E}_\infty$-ring spectra. Suppose that:
\begin{enumerate}
\item  
The $\pi_0 A$-module $\pi_0 B$ is faithfully flat, finite, and projective. 
\item
The canonical map $\pi_*(A) \otimes_{\pi_0(A)} \pi_0(B) \to \pi_*(B)$ is an
isomorphism. 
\end{enumerate}
Then the hypotheses of 
\Cref{finitedesc} are satisfied. 
\end{prop} 
\begin{proof} 
It is easy to see that $B$ is a perfect $A$-module.
Applying \Cref{rem:connective}, it suffices to prove this proposition after
replacing the map $A \to B$ with the map $\tau_{\geq 0} A \to \tau_{\geq 0} B$, since $B \simeq A\otimes_{\tau_{\geq 0}A} \tau_{\geq 0} B$.
Therefore, we may assume that $A$ and $B$ are connective themselves. 
In this case, $K_0(A) = K_0( \pi_0  A )$ (see, for example, \cite[Lec.~20, Cor.~3]{lurie-alg-k-theory}).
The result now follows from \Cref{algebraicunit}. \end{proof}

\begin{example}\label{projectivedescent}
Let $f \colon X \to Y$ be a morphism between two noetherian schemes with affine diagonal morphisms. 
Consider the symmetric monoidal functor $f^*\colon \perf(Y) \to \perf(X)$.

Suppose
$f$ is projective and of finite Tor-dimension, so that $f_*$
defines a functor $f_*\colon \perf(X) \to \perf(Y)$.  Suppose further that $f$ is surjective.

In this case, we claim that $f_*$ defines a rational surjection $K_0(X) \otimes
\mathbb{Q}
\twoheadrightarrow K_0(Y) \otimes \mathbb{Q}$. Without loss of generality, we
may assume $Y$ connected. Choose a
relatively ample line bundle $\mathcal{O}(1)$ on $X$.  Then $f_*(\mathcal{O}(n))$
is a perfect complex of nonzero Euler characteristic on $Y$ for $n \gg 0$. (Base-change to the local ring of a codimension-zero point to reduce to the case $Y=\operatorname{Spec}(B)$ with $B$ artinian; then since $f_*(\mathcal{O}(n))$ is a perfect complex of $B$-modules homologically concentrated in degree $0$, the Auslander-Buchsbaum formula implies that $f_*(\mathcal{O}(n))$ is finite free.  On the other hand, it is nonzero because its pullback surjects onto $\mathcal{O}(n)$.)  Therefore the class of $f_*(\mathcal{O}(n))$ is invertible in $K_0(Y) \otimes \mathbb{Q}$ by
\Cref{algebraicunit}.

Thus, by \Cref{mainmotdesc}, for any weakly additive invariant $F$ of $\mathrm{Perf}(Y)$-linear $\infty$-categories,
we have
that
\[ F( \perf(Y)) \to \left( F( \perf(X)) \rightrightarrows F( \perf(X \times_Y X))
\triplearrows \right)   \]
is an $\epsilon$-nilpotent limit diagram.
We use \cite[Thm.~1.2]{BZFN10} (whose hypotheses are satisfied by \cite[Prop.~3.19, Cor.~3.23]{BZFN10}) to identify $\perf(X \times_Y X) \simeq\perf(X) \otimes_{\perf(Y)} \perf(X)$ and so on for the higher terms.  

Note that if $f$ is not flat, then the fiber products such as $X\times_Y X$
need to be interpreted in the derived sense.  However, if $\ell$ is a prime
invertible on $Y$, then the $\ell$-completed $K$-theory will be the same
whether we take the derived or ordinary fiber products.  In fact, one uses Zariski descent to reduce to the affine case; there, by the group-completion theorem, we need to see that if $A$ is a connective $\mathbb{E}_\infty$-ring with $1/\ell\in A$ then $BGL_d(A)\rightarrow BGL_d(\pi_0A)\simeq\tau_{\leq 1}BGL_d(A)$ is a mod $\ell$ homology equivalence.  This follows from the fact that for $n>1$, $\pi_n BGL_d(A) \cong (\pi_{n-1}A)^{d^2}$ is uniquely $\ell$-divisible.
\end{example}

\subsection{Rognes's Galois extensions}

We now obtain our general descent result for Galois extensions of structured ring spectra in the sense of Rognes \cite{Rognes08}.
\begin{thm} 
\label{Galoisdesc} 
Let $A \to B$ be a $G$-Galois extension of $\mathbb{E}_\infty$-ring spectra where $G$ is finite.
Suppose that the image of the transfer map $K_0(B) \otimes \mathbb{Q} \to K_0(A) \otimes \mathbb{Q}$
contains the unit. Then, for all $n\ge 0$ and implicit primes, we have:
\begin{enumerate}
\item  
The canonical maps of spectra
\[ L^f_n K(A) \to L^f_n (K(B)^{hG}) \to ( L^f_n K(B))^{hG}
\]
are equivalences. 
\item
If $\mathcal{M}$ is any $A$-linear $\infty$-category and $F\colon
\md_{\perf(A)}^{\kappa}(\catst) \to \sp$ is any weakly additive invariant for
$\kappa$ large enough, 
the maps
\[  L^f_n F( \mathcal{M}) \to L^f_n (F( \mathcal{M}\otimes_A B )^{hG}) \to (L^f_n
F(\mathcal{M} \otimes_A B))^{hG}  \]
are equivalences.
\item Notation as above, the map
\[ F(\mathcal{M}) \to  ( F(\mathcal{M} \otimes_A B))^{hG} \]
is an $\epsilon$-equivalence (e.g., $K(A) \to K(B)^{hG}$ is an
$\epsilon$-equivalence). 
\item The $\mathrm{Tot}$ tower that computes $L_n^f F(\mathcal{M} \otimes_A
B)^{hG}$ is quickly converging. 
\end{enumerate}
\end{thm} 
\begin{proof} 
This follows from \Cref{finitedesc}. We note that $B$ is a dualizable, and hence perfect, $A$-module by \cite[Prop.\ 6.2.1]{Rognes08}, and that the equivalence $B \otimes_A B
\simeq \prod_{g \in G} B$ identifies the cobar construction $B
\rightrightarrows B \otimes_A B \triplearrows \dots $ with the 
diagram computing the $G$-homotopy fixed points of $B$. Note also that each of the
functors \[B\mapsto \mathrm{Perf}(B)\mapsto \uadd(\perf(B))\mapsto
F(\mathrm{Perf}(B))\] preserves finite products: in fact, any weakly additive invariant
preserves finite products.
\end{proof}

We next observe that the condition of rational surjectivity of the transfer
map is particularly transparent in the case of a Galois extension.

\newtheorem{condition}{Condition}
\renewcommand{\thecondition}{\Alph{condition}}

\begin{prop}
\label{prop:condition}
Let $G$ be a finite group and $A\to B$ be a $G$-Galois extension of $\mathbb{E}_\infty$-ring spectra.
Then the following are equivalent:

\begin{enumerate}
\item  $K_0(B) \otimes \mathbb{Q} \to K_0(A ) \otimes
\mathbb{Q}$ has image containing the unit.
\item The class of $[B] \in K_0(A) \otimes \mathbb{Q}$ is a unit.
\item The class $[B] \in K_0(A) \otimes
\mathbb{Q}$ is equal to $|G|$. 
\end{enumerate}
\end{prop}

\begin{proof}
Clearly, the third condition implies the second, which implies the first. 
It suffices to argue that the first condition implies the third to prove
that they are all equivalent.  

Let $i^*\colon  K_0(A) \otimes \mathbb{Q}  \to K_0(B) \otimes \mathbb{Q}$ be the canonical ring map and $i_*\colon 
K_0(B) \otimes \mathbb{Q} \to K_0(A) \otimes \mathbb{Q}$ be the restriction map. Recall that if we regard $K_0(B)\otimes \mathbb{Q}$ as a $K_0(A)\otimes \mathbb{Q}$-module via $i^*$, then $i_*$ is a module map.
Suppose $x \in K_0(B) \otimes \mathbb{Q}$ has $i_*(x) = 1$. Then
$i^*$ is injective, since it has a left inverse given by $y \mapsto i_*(x y)$. 
To show that $[B] = |G|$, it thus suffices to apply $i^*$ and check it in
$K_0(B) \otimes \mathbb{Q}$. 
We have \[i^*[B]=[B\otimes_A B]=\left[\prod_{g \in G} B\right]= |G|,\] and we are done. 
\end{proof}

For ease of reference, we give this property a name. 
\begin{condition}
\label{condition}
A $G$-Galois extension $A \to B$ satisfies the equivalent statements of 
\Cref{prop:condition}.
\end{condition}

We now give some examples. 

\begin{example} 
Let $n \geq 1$ and let $k$ be a field of characteristic 
zero containing the $n$th roots of unity.
We let 
$A$ be the unique $\e{\infty}$-algebra over $k$ with homotopy groups given by
$\pi_*( A) \simeq k[x_{2}^{\pm 1}]$ where $|x_{2}| = 2$. 
Here $A$ can be obtained by starting with the free $\mathbb{E}_\infty$-algebra
under $k$ on a degree two class and then inverting it.
There is a $C_n$-action on $A$ (obtained, e.g., using this presentation) such that a  generator acts on $x_2$ by
multiplication by a fixed primitive 
$n$th root of unity, and $A' = (A)^{hC_n}$ has homotopy groups given by $k[(x_2^{
n})^{\pm 1}]$. The map $A' \to A$ is a $C_n$-Galois extension and
Condition~\ref{condition} is satisfied, so we find that 
\[ K( A') \to K(A)^{hC_n}  \]
is an $\epsilon$-equivalence. 
\end{example}

\begin{example}\label{ex:ko}
Consider the $C_2$-Galois extension $\KO \to \KU$. 
In this case, the class in $K_0(KO)$ of $\KU \in \perf(\KO)$ is equal to 2, in view of Wood's
theorem (cf.~\cite[p.~206]{Ada95}) $\KO \otimes \Sigma^{-2} \mathbb{C}P^2 \simeq \KU$. 
Therefore, 
Condition~\ref{condition}
is satisfied and we find that the fiber of 
\( K(KO ) \to K(KU)^{hC_2}  \)
is an $\epsilon$-spectrum. 

We note that the comparison problem here was raised in \cite[Rem.~ 2.13]{AR12}. In \cite{BL14}, it was shown
that the map $K(\KO)\otimes \mathbb{Q} \to (K(\KU)\otimes \mathbb{Q}))^{hC_2}$ is an equivalence using localization sequences. 
\end{example}

As indicated in the introduction, we can actually do better in this case.

\begin{thm} \label{f2modulekoku}
\begin{enumerate}
	\item \label{koku} The fiber of the comparison map $K(\KO) \to K(\KU)^{hC_2}$  admits the structure
of an $\mathbb{F}_2$-module spectrum. 

	\item \label{eo} Let $E_{p-1}$ denote Lubin-Tate $E$-theory at the height
	$p-1$, so that $C_p$ is a subgroup of the Morava stabilizer group \cite{Hew95}. Then, the fiber of the comparison map $K(E_{p-1}^{hC_p}) \to K(E_{p-1})^{hC_p}$  admits the structure
of an $\mathbb{F}_p$-module spectrum. 
\item More generally, if $R \to R'$ is a $G$-Galois extension and the
wrong-way map $K_0(R') \to K_0(R)$
has image containing a prime number $p$, then the fiber of the map $K(R) \to
K(R')^{hG}$ admits the structure of an $\mathbb{F}_p$-module. 
\end{enumerate}
\end{thm} 

\begin{proof}
We will prove the third claim and then explain
why the first and second claims are special cases. 
For the third claim, we will apply \Cref{Fpmodulespectrumresult}. 

To apply this result we will first show that if $R \to R'$ is a $G$-Galois extension
of $\e{\infty}$-rings 
in the sense of Rognes (with $G$ finite), then $\perf(R')$ is dualizable in $\md_{\mathrm{Perf(R)}}(\catst)$.
 By \cite[Thm.\ 3.15]{AG14} (originally due to \cite[Prop. 1.5]{Toen12} for
 simplicial commutative rings), this statement is equivalent to
the assertion that $R'$ is a \emph{smooth and proper} $R$-algebra, that is: 
\begin{enumerate}
\item $R'$ is a perfect $R$-module, so all mapping $R$-module spectra in 
$\perf(R')$ are perfect as $R$-modules.
\item  $R' $ is perfect as an $R' \otimes_R R'$-module. 
\end{enumerate}
The first condition is a general fact about Galois extensions: the  perfect $R$-modules are the dualizable $R$-modules and $R'$ is dualizable by \cite[Prop.\ 6.2.1]{Rognes08}. The second condition follows from the
equivalence of algebras $R' \otimes_R R' \simeq \prod_{g \in G} R'$, which implies that $R'$
is a direct factor of a free $R' \otimes_R R'$-module of rank one.

Suppose $R \to R'$ is a $G$-Galois extension and the image of $K_0(R') \to
K_0(R)$ contains the prime $p$. In this case, we find that the fiber of the map
in $\mot_{\aleph_0}(R)$
\[ \uadd( \perf(R)) \to \mathrm{Tot}\left(\uadd( \perf(R')) \rightrightarrows
\uadd(\perf(R' \otimes_R R')) \triplearrows \dots \right)   \]
or equivalently
\[ \mathrm{fib}\left(
\uadd(\perf(R)) \to \uadd(\perf(R'))^{hG}\right) \]
admits the structure of an $\mathbb{F}_p$-module in $\mot_{\aleph_0}(R)$. 
We now apply the functor $$\hom_{\mot_{\aleph_0}(R)}( \mathbf{1}, \cdot) \colon
\mot_{\aleph_0}(R) \to \sp$$
(which is continuous and cocontinuous) and
use the identification (cf.\  \cite[Thm.~7.13]{BGT13} when $R = S^0$ and
\cite[Thm.~5.24]{HSS15} for $R$ arbitrary) 
with $K$-theory to conclude that
\[ \mathrm{fib}\left(K(R) \to K(R')^{hG}\right)  \]
admits the structure of an $\mathbb{F}_p$-module spectrum.

Finally, we need to check that  the relevant prime $p$ belongs to the image of the transfer map in
the cases of $\KO \to \KU$ and $E_{p-1}^{hC_p} \to E_{p-1}$. We already observed this for the extension $\KO\to \KU$ in \Cref{ex:ko}. The claim for the extensions $E_{p-1}^{hC_p}\to E_{p-1}$ is \Cref{cor:better_real_transfer}.
\end{proof}

\subsection{Further Galois examples}

In this subsection, we will give several further examples of our Galois descent
results. We will need various tools for verifying Condition~\ref{condition}.
We begin with the observation that if we can control $K_0$ of a ring spectrum,
checking Condition~\ref{condition} (on an arbitrary Galois extension) is often straightforward.

We consider the following condition on a ring spectrum.
Given an $\mathbb{E}_\infty$-ring $A$, let $\mathrm{Idem}(A)$ be the set of
idempotents in $\pi_0(A)$. 
We obtain a map $\mathrm{Idem}(A) \to
\mathrm{Idem}(K(A) \otimes \mathbb{Q})$ that sends an idempotent $e \in \pi_0 A$
to the class of the perfect $A$-module $A[e^{-1}]$. 
\begin{condition}
\label{cb}
The map $\mathrm{Idem}(A) \to \mathrm{Idem}(K(A) \otimes \mathbb{Q})$ is bijective.
\end{condition}

We first show that the map is often injective. 
\begin{lemma} 
\label{ofteninjective}
Let $A$ be an $\mathbb{E}_\infty$-ring. 
Suppose that $A$ has no nontrivial torsion idempotents (equivalently, for every nonzero idempotent $e \in \pi_0(A)$, the rationalization
$A[e^{-1}]_{\mathbb{Q}}$ is nontrivial).
Then the map $\mathrm{Idem}(A) \to \mathrm{Idem}(K(A) \otimes \mathbb{Q})$ is
injective. 
\end{lemma}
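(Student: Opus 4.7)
The plan is to pass from two potentially-distinct idempotents to the four orthogonal idempotents they jointly generate, and then use a rank argument at a residue field of the rationalization to detect nonzero classes in $K_0 \otimes \mathbb{Q}$. So suppose $e_1, e_2 \in \pi_0(A)$ are idempotents with $[A[e_1^{-1}]] = [A[e_2^{-1}]]$ in $K_0(A) \otimes \mathbb{Q}$; I want to show $e_1 = e_2$.

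First I would introduce the four orthogonal idempotents $e_{11} = e_1 e_2$, $e_{10} = e_1(1-e_2)$, $e_{01} = (1-e_1)e_2$, and $e_{00} = (1-e_1)(1-e_2)$, which sum to $1$, and use the resulting splitting $A \simeq A_{11} \times A_{10} \times A_{01} \times A_{00}$ (with $A_{ij} := A[e_{ij}^{-1}]$). This yields a decomposition $K_0(A) \otimes \mathbb{Q} \simeq \bigoplus_{i,j} K_0(A_{ij}) \otimes \mathbb{Q}$, under which $[A[e_1^{-1}]] = [A_{11}] + [A_{10}]$ and $[A[e_2^{-1}]] = [A_{11}] + [A_{01}]$, with $[A_{ij}]$ the unit in the $(i,j)$ summand. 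The hypothesized equality therefore forces $[A_{10}] = 0$ in $K_0(A_{10}) \otimes \mathbb{Q}$ and $[A_{01}] = 0$ in $K_0(A_{01}) \otimes \mathbb{Q}$.

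The main point is now to deduce from $[A_{ij}] = 0 \in K_0(A_{ij})\otimes\mathbb{Q}$ that $A_{ij} = 0$. If $A_{ij}$ were nonzero, then by the hypothesis that $A$ has no nontrivial torsion idempotents the rationalization $A_{ij,\mathbb{Q}}$ would be nonzero, hence $R := \pi_0(A_{ij})\otimes\mathbb{Q}$ would be a nonzero commutative $\mathbb{Q}$-algebra. Choosing a maximal ideal $\mathfrak{m} \subset R$ with residue field $F$, the composite $K_0(A_{ij}) \to K_0(R) \to K_0(F) = \mathbb{Z}$ sends $[A_{ij}]$ to $1$; rationalizing gives a map $K_0(A_{ij})\otimes\mathbb{Q} \to \mathbb{Q}$ sending $[A_{ij}]$ to $1 \neq 0$, a contradiction. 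Hence $e_{10} = 0$ and $e_{01} = 0$, which gives $e_1 = e_1 e_2 = e_2$.

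I expect the only step requiring any care to be the existence of the rank map $K_0(A_{ij}) \to \mathbb{Z}$ induced by an $\mathbb{E}_\infty$-map $A_{ij} \to F$ to a field: this uses that $\pi_0$ of the rationalization is a genuine nonzero ring, and that perfect modules over a field have a well-defined integer rank. Everything else is formal manipulation of the decomposition into the four orthogonal components.
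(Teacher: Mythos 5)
Your reduction is sound and is a concrete unwinding of the paper's shorter remark that the map of Boolean algebras has trivial kernel: the four orthogonal idempotents $e_{ij}$ and the associated product decompositions of $A$ and of $K_0(A)\otimes\mathbb{Q}$ correctly reduce the claim to showing that, for a nonzero idempotent, $[A_{ij}]\neq 0$ in $K_0(A_{ij})\otimes\mathbb{Q}$, i.e.\ that $K_0(A_{ij})\otimes\mathbb{Q}\neq 0$.

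The gap is in the step you yourself flag as the delicate one. To produce a ``rank'' homomorphism $K_0(A_{ij})\to\mathbb{Z}$ you invoke ``an $\mathbb{E}_\infty$-map $A_{ij}\to F$ to a field,'' equivalently a composite $K_0(A_{ij})\to K_0(R)\to K_0(F)$ with $R=\pi_0(A_{ij})\otimes\mathbb{Q}$. But for a non-connective $\mathbb{E}_\infty$-ring there is in general no ring spectrum map $A_{ij}\to H\pi_0(A_{ij})$, and hence no map to $HR$ or to $HF$. For instance there is no nonzero ring spectrum map $\KU\to H\mathbb{Q}$ (the Bott class is a unit in $\pi_*\KU$ but must die in $\pi_2 H\mathbb{Q}$), even though $\pi_0(\KU)\otimes\mathbb{Q}=\mathbb{Q}$ is a perfectly good field. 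Since this lemma is applied in the paper precisely to non-connective rings such as $\KU$, $\TMF$, and the Lubin--Tate spectra, your argument does not cover the intended generality: knowing that $\pi_0(A_{ij})_{\mathbb{Q}}$ is a nonzero $\mathbb{Q}$-algebra does not supply the needed ring map.

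The fix the paper uses is to replace the hypothetical map to a residue field by the Dennis trace. One always has a multiplicative map
\[
K_0(A_{ij})\otimes\mathbb{Q}\longrightarrow \THH_0(A_{ij,\mathbb{Q}})\longrightarrow \pi_0(A_{ij,\mathbb{Q}}),
\]
regardless of connectivity, and it carries the unit $[A_{ij}]$ to the unit. Since the target is nonzero by the no-torsion-idempotents hypothesis, the source is nonzero, which is exactly what your reduction requires. If you substitute the trace map for the ``map to a field'' step, the rest of your argument goes through.
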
 
\begin{proof} 
Note first that the map 
$\mathrm{Idem}(A) \to \mathrm{Idem}(K(A) \otimes \mathbb{Q})$ is a map of
Boolean algebras. 
It thus suffices to show the map has trivial kernel. 
Let $e$ be a nontrivial idempotent. We need to show that the class of the
perfect $A$-module $A[e^{-1}]$ is nontrivial in $K_0(A) \otimes \mathbb{Q}$. 
However, under the map $K_0(A) \otimes \mathbb{Q} \to K_0(A[e^{-1}]) \otimes
\mathbb{Q}$, this class is carried to the unit. 
Therefore, it suffices to show that $K_0(A[e^{-1}]) \otimes \mathbb{Q} \neq 0$. 
This follows from the existence of a multiplicative trace map
\[  K_0(A[e^{-1}]) \otimes \mathbb{Q} \to 
\THH_0(A[e^{-1}]_{\mathbb{Q}}) \to \pi_0 A[e^{-1}]_{\mathbb{Q}}.
\]
Since the target is nonzero by assumption, the source must be nonzero too. 
\end{proof}

Condition~\ref{cb} will play a basic role in this subsection, and most of
our results will state that specific $\mathbb{E}_\infty$-ring spectra
satisfy this condition. In particular, it
(together with a mild statement about rationalization) implies the previous 
Condition~\ref{condition} for every Galois extension. 

\begin{prop} 
\label{k0criterion}
Suppose $A$ is an $\mathbb{E}_\infty$-ring such that: 
\begin{enumerate}
\item $A$ satisfies Condition~\ref{cb}.
\item
$A$ has no nontrivial torsion idempotents. 
\end{enumerate}
Then every $G$-Galois extension $A \to B$ satisfies Condition~\ref{condition}.
\end{prop} 
\begin{proof} 
We use the notation in the discussion immediately following
Condition~\ref{condition}. 
Namely, we let $i_* \colon K_0(B) \otimes \mathbb{Q} \to K_0(A) \otimes
\mathbb{Q}$ the rational restriction of scalars map and $i^*
\colon K_0(A) \otimes \mathbb{Q} \to K_0(B) \otimes \mathbb{Q}$ be the usual
map. Let $y = i_*(1)/|G| \in K_0(A) \otimes \mathbb{Q}$. We need to argue that
$y = 1$. 

We have $y^2 = y$
by the relation $B \otimes_A B \simeq \prod_G B$, so $y \in K_0(A) \otimes
\mathbb{Q}$ is an idempotent. 
By Condition~\ref{cb}, every idempotent in $K_0(A) \otimes \mathbb{Q}$ arises
from an idempotent $e \in \pi_0 A$. 
Consider the $\mathbb{E}_\infty$-ring $A' = A[(1-e)^{-1}]$ and the $G$-Galois
extension $A' \to B' \simeq B \otimes_A A'$. 
(Since $A'$ is a perfect $A$-module, base-change along $A \to A'$ preserves the
Galois condition). 
It follows that we have a $G$-Galois extension $A' \to B'$ such that $[B'] = 0
\in K_0(A') \otimes \mathbb{Q}$. We will now show that $A'_{\mathbb{Q}} = 0$.
The hypothesis (2) we have assumed implies that $e = 1$, and we will then be done.

The image of $[B']$ under the map $K_0(A') \otimes \mathbb{Q} \to K_0(B') \otimes 
\mathbb{Q}$ is given by $|G|$ via the Galois property $B' \otimes_{A'} B' \simeq \prod_G B'$.
Thus, if $[B'] \in K_0(A') \otimes \mathbb{Q}$ vanishes,  we conclude that
$K_0(B') \otimes \mathbb{Q}$ is the zero ring and, by the argument with the
trace of \Cref{ofteninjective}, we find that $B'_{\mathbb{Q}} \simeq 0$. 
By
\Cref{galoisexttorsion}, we get that $A'_{\mathbb{Q}} \simeq 0$.\footnote{When $A' \to B'$ is a faithful Galois
extension, then $A' \to B'$ is descendable \cite[Def. 3.18]{MGal}, so that $B'_{\mathbb{Q}} \simeq 0$
implies $A'_{\mathbb{Q}} \simeq 0$ by descent. The use of
\Cref{galoisexttorsion} is to cover the non-faithful case.} 
\end{proof} 

\begin{lemma} 
\label{galoisexttorsion}
Let $B$ be an $\mathbb{E}_\infty$-ring. Suppose the finite group $G$ acts on $B$ in
the $\infty$-category of $\mathbb{E}_\infty$-rings and set $A = B^{hG}$.
Suppose an integer $N$ is
nilpotent in $\pi_0(B)$. Then $N$ is nilpotent in $\pi_0(A)$ too.
\end{lemma}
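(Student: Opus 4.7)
The plan is to exploit the fact that, for a ring spectrum, multiplication by an integer on the underlying spectrum is controlled by its action on the unit in $\pi_0$. Specifically, if $N^k = 0 \in \pi_0(B)$ for some $k \geq 1$, I would observe that the self-map of the spectrum $B$ given by multiplication by $N^k$ agrees with multiplication by the element $N^k \cdot 1_B \in \pi_0(B)$ (this follows from the identification $B \simeq S^0 \otimes B$ together with the unit map $S^0 \to B$). Since $N^k \cdot 1_B = 0$, this self-map $N^k \colon B \to B$ is null-homotopic.

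Next, I would use that homotopy fixed points are functorial: the formation of $B^{hG}$ is an exact functor from $G$-objects in spectra to spectra. The multiplication-by-$N^k$ self-map of $B$ is tautologically $G$-equivariant, and applying $(-)^{hG}$ sends it to the multiplication-by-$N^k$ self-map of $A = B^{hG}$. Since the functor preserves null-homotopies, the map $N^k \colon A \to A$ is null-homotopic as well.

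Finally, applying $\pi_0$ to this null-homotopic self-map yields $N^k \cdot 1_A = 0$ in $\pi_0(A)$, which is exactly the statement that $N$ is nilpotent in $\pi_0(A)$. There is no real obstacle here: the only subtle point is the identification of ``multiplication by the integer $N^k$ on the spectrum $B$'' with ``multiplication by the element $N^k \cdot 1_B \in \pi_0(B)$,'' which is a standard manipulation for ring spectra and requires no assumptions beyond $B$ being a unital ring spectrum. In particular, the argument never uses the $\mathbb{E}_\infty$-structure beyond having a unit, nor does it use any finiteness of $G$ or properties of the $G$-action.
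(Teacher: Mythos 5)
Your argument breaks down at the step ``since the functor preserves null-homotopies, the map $N^k\colon A\to A$ is null-homotopic as well.'' What you have established in the first paragraph is that the self-map $N^k\colon B\to B$ is null \emph{as a map of spectra}, i.e., $N^k=0$ in $\pi_0\hom_{\sp}(B,B)$. To apply $(-)^{hG}$ and conclude, you would need this map to be null \emph{in the $\infty$-category of $G$-objects in spectra}, i.e., $N^k=0$ in $\pi_0\hom_{\sp^{BG}}(B,B)$. These are genuinely different statements: the forgetful map $\pi_0\hom_{\sp^{BG}}(B,B)\to\pi_0\hom_{\sp}(B,B)$ is a group homomorphism, and knowing that $N^k$ lands in its kernel tells you nothing about $N^k$ itself. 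Put differently, a null-homotopy of the underlying map need not be (and in general cannot be) chosen $G$-equivariantly, and the functor $(-)^{hG}$ only transports homotopies that already live in $\sp^{BG}$.

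Worse, the missing claim is in fact \emph{equivalent} to the lemma. Multiplication by $N^k$ on $B$ in $\sp^{BG}$ is multiplication by the element $N^k\cdot 1\in\pi_0 A=\pi_0 B^{hG}$ (via the $A$-module structure on $B$), and its nullity is precisely the statement that $N^k=0$ in $\pi_0 A$. The hypothesis only gives you that the image of $N^k\cdot 1$ under the non-injective edge map $\pi_0 B^{hG}\to\pi_0 B$ vanishes; there is no formal reason for the class itself to vanish rather than live in higher filtration of the homotopy fixed point spectral sequence. Your closing remark that the proof ``never uses the $\mathbb{E}_\infty$-structure beyond having a unit, nor\ldots any finiteness of $G$'' is a warning sign: the paper's proof genuinely uses both, reducing first to $p$-primary pieces, then to a $p$-Sylow subgroup, then inductively to $G=C_p$, and finally invoking the Tambara-functor structure on $\pi_0$ of the (Borel-complete genuine) $\mathbb{E}_\infty$-$C_p$-ring to deduce from the norm relation $N_e^{C_p}(p)=p+(p^{p-1}-1)\mathrm{Tr}_e^{C_p}(1)$ that $p$ is nilpotent in $\pi_0 A$. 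None of these steps is cosmetic, and the fact that your argument would in addition produce the sharper bound ``$N^k=0$ in $\pi_0 B$ implies $N^k=0$ in $\pi_0 A$ with the same $k$'' (which the paper's proof does not give, and which one should not expect) is further evidence that a real idea is missing.
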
 
\begin{proof} 
First of all, $B$ splits into a direct product of 
its $p$-completions for $p \mid N$, and the $p$-completion $\widehat{B}_p$
has the property that it is annihilated by a power of $p$. 
The splitting is compatible with the action of $G$. Therefore, we may assume
that $N = p$ is a prime number. 

Let $G_p$ be a $p$-Sylow subgroup of $G$. Then $A$ is a retract of $B^{hG_p}$.  It suffices to
show that $p$ is nilpotent in $B^{hG_p}$. 
Therefore, we can reduce us to the case where $G = G_p$ is itself a $p$-group. 
Since $p$-groups are nilpotent, an induction reduces to the case of $G = C_p$,
which we now consider.

So suppose $B$ is an $\mathbb{E}_\infty$-ring with a $C_p$-action and $A \simeq B^{hC_p}$. Suppose  $p$ is nilpotent in
$\pi_0(B)$. By \cite[Thm.\ 1.4]{BlHi15},  the $C_p$-action on $B$ determines a
cofree or Borel-complete
genuine commutative $C_p$-ring spectrum $R$  
with $R^{\left\{1\right\}}\simeq B, R^{C_p} \simeq A$. By the
results of \cite{Bru07}, its $\pi_0$ is
therefore endowed with the structure of a Tambara functor \cite{Tam93}. 
Therefore, we can consider the multiplicative norm $N_{e}^{C_p}\colon \pi_0(B) \to
\pi_0(A)$ as well as the additive norm $\mathrm{Tr}_{e}^{C_p}\colon \pi_0(B) \to
\pi_0(A)$. 
We have, using \cite[p.\ 1398, (v)]{Tam93},
\[ N_e^{C_p}(p) = p + (p^{p-1} - 1) \mathrm{Tr}_{e}^{C_p}(1) \in \pi_0(A).  \]
Note that $x = \mathrm{Tr}_e^{C_p}(1)$ satisfies $x^2 = px$. Since for $n\ge 0$ sufficiently large, 
$p^n x = \mathrm{Tr}_e^{C_p}(p^n) =0 $, it follows that $x$ is nilpotent:
in fact, $x^{n+1} = 0$. 
Since $N_e^{C_p}(p)$ is nilpotent in $\pi_0(A)$ too, it follows that $p$ is
nilpotent in $\pi_0(A)$. 
\end{proof} 

We next exhibit a class of $\mathbb{E}_\infty$-rings $A$ for which 
$K_0(A)$ admits a purely algebraic description, and as a result, the above conditions are more amenable for them.
In particular, we will show that in this case Condition~\ref{cb} is satisfied.

\begin{prop} 
\label{controlk0}
Let $A$ be an $\mathbb{E}_\infty$-ring spectrum. Suppose $\pi_*(A)$ is even, regular, and noetherian of finite Krull dimension.
Then we have an isomorphism
of commutative rings
\[ K_0(A) \simeq K_0( \pi_*(A)) ,   \]
where we consider $\pi_*(A)$ as an ungraded ring. 

\end{prop} 
\begin{proof} 
We define the morphism $$K_0(A) \to K_0(\pi_*(A))$$
as follows.
Note that by regularity, the target is the Grothendieck group of finitely generated
$\pi_*(A)$-modules. 
Given a perfect $A$-module $M$, we consider $\pi_*(M) \simeq
\pi_{\mathrm{even}}(M) \oplus \pi_{\mathrm{odd}}(M)$ as a finitely generated
$\pi_*(A)$-module and take the class $[\pi_{\mathrm{even}}(M)] -
[\pi_{\mathrm{odd}}(M)]$ in $K_0$. The long exact sequence in
homotopy implies easily that this  defines a map  
$K_0(A) \to K_0(\pi_*(A))$  as desired. 

To define the map in the opposite direction, let $\mathcal{C}$ be the
category of finitely generated, \emph{evenly} graded projective $\pi_*(A)$-modules. To any
object of $\mathcal{C}$ we can associate a perfect $A$-module (given by a
retract of a sum of shifts of $A$ itself) and we easily obtain a
multiplicative map
\[ K_0(\mathcal{C}) \to K_0( A).  \]
For any $P \in \mathcal{C}$, the class of $P$ and its shift $P(2)$
(defined such that $P(2)_k = P_{k+2}$) map to
the same class in $K_0(A)$.
By a theorem of van den Bergh \cite{vdB86} (cf.\ also \cite[Cor.~6.4.2]{Haz16},
but applied to the graded ring $B_*$ with $B_{\ast} = A_{2 \ast}$), 
we have 
an isomorphism of rings
\begin{equation}  \label{kgraded} K_0(\pi_*(A)) \simeq K_0(\mathcal{C})/\left \langle  [P] -
[P(2)]\,\mid\, P\in{\mathcal C}\right\rangle  ,\end{equation}
so we obtain a map
\[ K_0(\pi_*(A)) \to K_0(A).  \]
It suffices now to show that the two composite maps $K_0(A) \to K_0(\pi_*(A)) \to K_0(A) $
and $ K_0(\pi_*(A)) \to K_0(A) \to K_0(\pi_*(A)) $ are the identity. 

For the first claim, we first observe that $K_0(A)$ is generated by
classes corresponding to retracts of graded free $A$-modules. To see this, we fix a
perfect $A$-module $M$ and induct on the homological dimension of $\pi_*(M)$ as a
$\pi_*(A)$-module. If $\dim_{\pi_*(A)} \pi_*(M) =0 $, then 
$\pi_*(M)$ is projective so that 
$M$ itself is a retract of a graded free $A$-module. If 
$\dim_{\pi_*(A)} \pi_*(M) >0 $, then we choose a graded free $A$-module and a
map $F \to M$ inducing a surjection on homotopy. We have then a cofiber
sequence $M' \to F \to M$, where $\dim_{\pi_*(A)}\pi_*(M') = \dim_{\pi_*(A)}
\pi_*(M) -1$. Using $[M] = [F] - [M']$ and induction, we can conclude that $[M]$
belongs to the subgroup of $K_0(A)$ generated by the classes corresponding to
retracts of graded free $A$-modules. 

Suppose $M$ is a retract of a graded free $A$-module. In this case, one checks
directly the map $K_0(A) \to K_0(\pi_*(A)) \to K_0(A)$ carries $[M]$ to
itself, as desired. Since the classes $[M]$ generate $K_0(A)$, this completes
the verification of this case.

For the second map, we use \eqref{kgraded}. Consider a class in $K_0(\pi_*(A))$
represented by an evenly graded $\pi_*(A)$-module $P_*$ which 
is projective. 
We can find an $A$-module $P$ with $\pi_*(P) \simeq P_*$.
The map $K_0(\pi_*(A)) \to K_0(A)$ carries $[P_*] \mapsto [P]$. It follows
easily that the composite $K_0(\pi_*(A)) \to K_0(\pi_*(A))$ is the identity
on $[P_*]$, and therefore in general. 

\end{proof} 

\begin{cor}\label{cor:check-K-0} Assume $A$ is an $\mathbb{E}_\infty$-ring such
that $\pi_*(A)$ is even, regular, noetherian and of finite Krull dimension. Then $A$
satisfies Condition~\ref{cb}. 
\end{cor}

\begin{proof} Without loss of generality, we can assume that $\pi_0(A)$ (and
thus $\pi_*(A)$) has no nontrivial idempotents,  after splitting into
finitely many factors. 
The augmentation map $K_0(A)\simeq K_0(\pi_*(A))\to\mathbb{Z}$ takes values in $\mathbb{Z}$, because $\pi_0(A)$ is connected. By 
\Cref{algebraicunit}, its kernel is locally nilpotent, hence
$K_0(A)\otimes\mathbb{Q}$ is a nilpotent thickening of $\mathbb{Q}$, and does therefore not contain any non-trivial idempotents.
\end{proof}

Next, we include a basic tool that enables us to check
Condition~\ref{cb} via descent.

\begin{prop} 
\label{criterionforgaloisdesc}
Suppose $A$ 
is an $\mathbb{E}_\infty$-ring. 
Suppose that there exists an $\mathbb{E}_\infty$-$A$-algebra $A'$ such that: 
\begin{enumerate}
\item  
$A'$ is
perfect as an $A$-module and the map $K_0(A') \otimes \mathbb{Q} \to K_0(A)
\otimes \mathbb{Q}$ from restriction of scalars is surjective. 
\item
$A$ is $A'$-complete. 
\item $A'$ satisfies Condition~\ref{cb}.
\item  Suppose that either $A'$ has no nontrivial idempotents, or $A'$ and $A' \otimes_A A'$ have no nontrivial torsion idempotents. 
\end{enumerate}
Then $A$ satisfies Condition~\ref{cb} and has no nontrivial torsion idempotents.\end{prop} 
\newcommand{\idem}{\mathrm{Idem}}
\begin{proof} 
We have, since $A$ is $A'$-complete and $A'$ is dualizable in $\md(A)$, an equivalence \cite[Prop.~2.21]{MNN15i}
\[ A \simeq \mathrm{Tot}\left(A' \rightrightarrows A' \otimes_A A' \triplearrows
\dots \right). \]
We also know that we have an equivalence
\[ K(A)_{\mathbb{Q}} \simeq  \mathrm{Tot}\left(K(A')_{\mathbb{Q}}
\rightrightarrows K(A' \otimes_A A')_{\mathbb{Q}} \triplearrows
\dots \right). \]
as a (very) special case of \Cref{finitedesc}: in fact, in this case the above
augmented cosimplicial object is seen to be split. 
The functor $\mathrm{Idem}\colon \clg(\sp) \to \mathrm{Set}$ commutes with all
limits (as it is corepresentable by the $\mathbb{E}_\infty$-ring $S^0 \times S^0$), and
we find that 
we have equalizer diagrams
\begin{gather*}
\idem(A) \to \left( \idem(A') \rightrightarrows \idem(A' \otimes_A A') \right)
\\
\idem(K(A)_{\mathbb{Q}}) \to \left( \idem(K(A')_{\mathbb{Q}})
\rightrightarrows \idem(K(A' \otimes_A A')_{\mathbb{Q}}) \right)
.\end{gather*}
We have a natural transformation between them. 

A diagram-chase now completes the proof. First of all, since $\idem(A') \to
\idem(K(A')_{\mathbb{Q}})$ is an isomorphism, 
we get that $\idem(A) \to \idem(K(A)_{\mathbb{Q}})$ is at least \emph{injective.}
For example, suppose that $A', A'
\otimes_A A'$ have no nontrivial torsion idempotents. 
Since by assumption $\idem(A')
\to \idem(K(A')_{\mathbb{Q}})$ is an isomorphism, and since by
\Cref{ofteninjective} the map 
$\idem(A' \otimes_A A')
\to \idem(K(A' \otimes_A A')_{\mathbb{Q}})$
is injective, we can conclude that $\idem(A) \to \idem(K(A)_{\mathbb{Q}})$ is
an isomorphism. The case where $A' $ has no nontrivial idempotents is easier
and we find that $\idem(A) \simeq \idem(K(A)_{\mathbb{Q}}) \simeq
\left\{0, 1\right\}$. 
\end{proof} 

For our example with topological modular forms, we will need to check
Condition~\ref{cb} by Zariski localization on the base.  We will use the following result.

\begin{cor} 
\label{reducecblocal}
Suppose $A$ is an $\mathbb{E}_\infty$-ring. Suppose every localization 
$A_{(p)}$  at a prime $p$ satisfies
Condition~\ref{cb}. Then $A$ satisfies
Condition~\ref{cb}.
\label{1primecondition}
\end{cor} 
\begin{proof} 
We consider the following two examples of sheaves with values in the $\infty$-category
$\clg(\sp_{\geq 0})$ of connective $\mathbb{E}_\infty$-ring spectra on the Zariski site of
$\mathrm{Spec}(\mathbb{Z})$: 
\begin{enumerate}
\item $\mathcal{F}_1$ 
assigns to the open subset 
$\mathrm{Spec}(\mathbb{Z}[1/N])$ the $\mathbb{E}_\infty$-ring $\tau_{\geq 0}A[1/N]$. 
\item
$\mathcal{F}_2$ 
assigns to the open subset 
$\mathrm{Spec}(\mathbb{Z}[1/N])$ the $\mathbb{E}_\infty$-ring
$K(A[1/N])_{\mathbb{Q}}$. Zariski descent in $K$-theory 
(a special case of Nisnevich descent in $K$-theory, cf.\ \Cref{nisnevichexcision})
implies that this is a
sheaf of connective $\mathbb{E}_\infty$-ring spectra.
\end{enumerate}
Recall that $\idem\colon \clg(\sp_{\geq 0}) \to \mathrm{Set}$ commutes with limits,
so we obtain sheaves of sets $\idem(\mathcal{F}_1), \idem(\mathcal{F}_2)$. We
have a canonical map of sheaves
\[ \idem(\mathcal{F}_1) \to \idem(\mathcal{F}_2)  \]
on Spec$(\mathbb{Z})$,
and our assumption (and the fact that $\idem$ and $K(-)$ commute with filtered
colimits) implies that the map on stalks at closed points is an isomorphism.
Since the set of closed points is very dense, this implies that
$\idem(\mathcal{F}_1) \simeq \idem(\mathcal{F}_2)$ (see \cite[Expose IV, ex.~ 9.1.7.2,c]{SGA41} and \cite[ex.~10.15 and 10.16]{wedhorn} for more on this). Taking global sections, we
find that $A$ satisfies Condition~\ref{cb} as desired. 
\end{proof}

We will now need various tools that enable one to reduce checking
Condition~\ref{cb} via other types of localization and completion. 
The next lemma is well-known in the algebraic 
context (where $x$ has degree zero) (cf.\ \cite[Ex.~3.19.2]{TT90}). 
\begin{lemma}[] 
Let $A$ be an $\mathbb{E}_\infty$-ring and let $x \in \pi_*(A)$ be a
homogeneous element. Then the diagram \[ \xymatrix{
K(A) \ar[d] \ar[r] & K(A[x^{-1}])  \ar[d]  \\
K(\widehat{A}_x) \ar[r] &  K( \widehat{A}_x[x^{-1}])
}\]
is homotopy cartesian in the $\infty$-category $\sp_{\geq 0}$ of connective
spectra.
\end{lemma}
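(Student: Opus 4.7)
The strategy is to realize the square as a comparison of localization fiber sequences attached to $A \to A[x^{-1}]$ and $\widehat{A}_x \to \widehat{A}_x[x^{-1}]$, and then to identify the fibers by an excision statement. Let $\perf(A)^{\xtor} \subset \perf(A)$ denote the thick subcategory spanned by those $M$ with $M[x^{-1}] \simeq 0$. By the standard localization theorem for perfect modules (cf.\ \cite[\S 5.1]{BGT13}),
\[\perf(A)^{\xtor} \to \perf(A) \to \perf(A[x^{-1}])\]
is a Verdier sequence in $\catst$. Applying non-connective $K$-theory produces a fiber sequence of spectra
\[\mathbb{K}(\perf(A)^{\xtor}) \to \mathbb{K}(A) \to \mathbb{K}(A[x^{-1}]),\]
and the analogous fiber sequence holds with $\widehat{A}_x$ in place of $A$. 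The morphism $A \to \widehat{A}_x$ induces a comparison map between these two fiber sequences.

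The core assertion is then that base change induces an equivalence of stable $\infty$-categories
\[(-)\otimes_A \widehat{A}_x \colon \perf(A)^{\xtor} \xrightarrow{\;\sim\;} \perf(\widehat{A}_x)^{\xtor}.\]
Granting this, the two fiber sequences have equivalent left-hand terms, so the square of \emph{non-connective} $K$-theory spectra obtained from their right-hand portions is cartesian in $\sp$. Now applying $\tau_{\geq 0}$, which preserves all limits (being right adjoint to $\sp_{\geq 0}\hookrightarrow\sp$), and using the identification $\tau_{\geq 0}\mathbb{K} \simeq K$ on the vertices, one obtains the claimed homotopy cartesian square in $\sp_{\geq 0}$.

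The main obstacle is therefore the excision equivalence $\perf(A)^{\xtor} \simeq \perf(\widehat{A}_x)^{\xtor}$. This is a version of Dwyer--Greenlees/Greenlees--May rigidity in the setting of $\mathbb{E}_\infty$-rings: any $A$-module $M$ with $M[x^{-1}] \simeq 0$ is automatically derived $x$-complete, so that the natural completion map $M \to M \otimes_A \widehat{A}_x$ is an equivalence. This simultaneously implies that the base change functor lands in $x$-nilpotent $\widehat{A}_x$-modules and is fully faithful on $\perf(A)^{\xtor}$. For essential surjectivity, one observes that $\perf(A)^{\xtor}$ and $\perf(\widehat{A}_x)^{\xtor}$ are each generated as thick subcategories by the fibers of the respective localization-away-from-$x$ maps on the unit, and that these fibers correspond under base change because inverting $x$ commutes with $(-)\otimes_A \widehat{A}_x$. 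A clean formal framework for this may be extracted from the theory of acyclization and localization developed in \cite[\S 2--3]{MNN15i}, applied to the map $A \to A[x^{-1}]$; additional care is needed to verify that the functor preserves compactness and reflects the thick subcategory structure, but this is routine since $(-)\otimes_A \widehat{A}_x$ is a symmetric monoidal left adjoint.
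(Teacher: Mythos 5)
Your proposal is correct and essentially coincides with the paper's argument: both hinge on the excision equivalence $\perf_{\xtor}(A) \simeq \perf_{\xtor}(\widehat{A}_x)$ (justified by the observation that a perfect $x$-power torsion module is automatically $x$-adically complete), followed by comparing the two localization fiber sequences. The only cosmetic difference is that you route through $\mathbb{K}$ in $\sp$ and then apply $\tau_{\geq 0}$, whereas the paper directly invokes the connective localization fiber sequence of Barwick (Prop.~11.16 of \cite{Bar16}); these are equivalent packagings of the Thomason--Trobaugh--Neeman theorem.
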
 
\begin{proof} 
Let $\perf_{\xtor}(A)$ denote the $\infty$-category of perfect $A$-modules which are
$x$-power torsion. The functor
\begin{equation}   \label{xtoryinverse}  \perf_{\xtor}(A) \to
\perf_{\xtor}(\widehat{A}_x)\end{equation}
given by tensoring with $\widehat{A}_x$ is an equivalence. 
In fact, any \emph{perfect} $x$-power torsion $A$-module is automatically $x$-adically
complete as $x$ will act nilpotently on it. 

Now comparing the fiber sequences
of \emph{connective} spectra (cf.\ \cite[Prop.~11.16]{Bar16})
\[ \xymatrix{
K(\perf_{\xtor}(A)) \ar[d]^{\simeq}  \ar[r] &  K( \perf(A)) \ar[d]  \ar[r] &  K( \perf(A[x^{-1}])) \ar[d]  \\
K(\perf_{\xtor}(\widehat{A}_x))   \ar[r] &  K( \perf(\widehat{A}_x))  \ar[r] &
K( \perf(\widehat{A}_x[x^{-1}])),   
}\]
shows that we have a fiber square in $\sp_{\geq 0}$ as desired.
\end{proof} 

Using similar arguments as in the proof of
\Cref{reducecblocal}, we find the following result: 

\begin{prop} 
\label{checkaftercompletion}
Let $A$ be an $\mathbb{E}_\infty$-ring. Suppose $x \in \pi_*(A)$ is a homogenous element. 
Suppose $\widehat{A}_x, A[x^{-1}], \widehat{A}_x[x^{-1}]$ satisfy Condition~\ref{cb}. 
Then $A$ satisfies
Condition~\ref{cb}. 

\end{prop}

Using similar reasoning, one also has: 
\begin{prop} 
\label{grZdesc}
\label{quotientbytwoelements}
Let $A$ be an $\mathbb{E}_\infty$-ring and $x_1, \dots, x_n \in \pi_*(A)$. 
Suppose $A/(x_1, \dots ,x_n)$ is contractible and the $\mathbb{E}_\infty$-rings $A[ ( x_{i_1} \dots x_{i_k})^{-1}]$
satisfy Condition~\ref{cb} for any nonempty collection of indices $i_1,
\dots, i_k \in \left\{1, \dots, n\right\}$. 
Then $A$ satisfies
Condition~\ref{cb}. 
\end{prop} 
\begin{proof} 

We use induction on $n$. When $n = 1$, the assertion is obvious, so we
assume $n > 1$ and the result proved for $n - 1$ replacing $n$. 
Let $A'$ be the localization of $A$ away from the perfect $A$-module 
$\mathrm{End}_{\md(A)}( A/(x_1, \dots, x_{n-1}))$ (cf.\ \cite[\S 3]{MNN15i}
for an exposition). 
We then have 
$A'[x_i^{-1}] \simeq A[x_i^{-1}]$ for $1 \leq i\leq n-1$ and $A'/(x_1, \dots,
x_{n-1}) = 0$.  In addition, the map $A \to A'$ induces an
equivalence $A/x_n \simeq A'/x_n$.
As a result, the induced map $\perf(A) \to \perf(A')$ restricts to an
equivalence on $x_n$-power torsion objects. 
We therefore obtain a fiber square
of connective spectra
\[ \xymatrix{
K(A) \ar[d] \ar[r] &  K(A')  \ar[d]   \\ 
K(A[x_n^{-1}]) \ar[r] &  K(A'[x_n^{-1}]).
}\]
By induction, it follows that $A'$ and $A'[x_n^{-1}]$
satisfy Condition~\ref{cb}: in fact, we consider the sequence $x_1, \dots,
x_{n-1} \in \pi_* A'$. 
Therefore, the above fiber square implies that $A$ satisfies Condition~\ref{cb}. 
\end{proof}

\newcommand{\mellb}{\mathfrak{M}_{ell}}

\newcommand{\mell}{{M}_{ell}}
\newcommand{\mellc}{{M}_{\overline{ell}}}
\newcommand{\otop}{\mathcal{O}^{\mathrm{top}}}

We now include our main examples involving $\TMF$, the spectrum of topological modular forms (cf.\ \cite{tmf_book} for a textbook reference).
We first need a general lemma about even periodic derived stacks, cf.\ \cite[Section 2]{MM15} for an exposition of this.
Given an even periodic derived stack $(X, \otop)$, we let $\omega$ denote the
line bundle 
$\pi_2 \otop$ on $X$. 

\begin{lemma} 
\label{qaffbgm}
Let $\mathfrak{X} = (X, \otop)$ be a regular, noetherian even periodic derived
Deligne-Mumford stack. 
Suppose that the map $X \to B \mathbb{G}_m$ classifying $\omega$ is
quasi-affine. Then $\otop(X)$ satisfies Condition~\ref{cb}.
If $X$ is flat over $\spec \mathbb{Z}$, then $\otop(X)$ has no nontrivial torsion idempotents.
\end{lemma}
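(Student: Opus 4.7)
The plan is to use the quasi-affine hypothesis to produce a finite sequence of homogeneous elements $x_1, \ldots, x_n \in \pi_* A$ satisfying the hypotheses of \Cref{quotientbytwoelements}, and then to invoke \Cref{cor:check-K-0} on each localization $A[(x_{i_1}\cdots x_{i_k})^{-1}]$ individually.

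Concretely, I would first exploit the quasi-affine morphism $X \to B\mathbb{G}_m$. Forming the graded ring $R_* := \bigoplus_n H^0(X, \omega^{\otimes n})$, quasi-affineness identifies $X$ with a quasi-compact open substack of $[\spec R_*/\mathbb{G}_m]$, whose complement is, by noetherianity, cut out by finitely many homogeneous elements $x_1, \ldots, x_n \in R_*$ of positive even degree. Via the edge homomorphism of the descent spectral sequence $H^s(X, \omega^{\otimes t}) \Rightarrow \pi_{2t-s} A$, these lift to homogeneous elements of $\pi_* A$, still satisfying $A/(x_1, \ldots, x_n) \simeq 0$. For any nonempty $I = \{i_1, \ldots, i_k\} \subseteq \{1, \ldots, n\}$, the localization $A[(x_{i_1} \cdots x_{i_k})^{-1}]$ is the $\mathbb{E}_\infty$-ring of sections of $\otop$ on the affine open substack $U_I \subset X$ obtained by intersecting the corresponding distinguished opens. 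Over $U_I$ the descent spectral sequence collapses, so $\pi_* \otop(U_I)$ is even (inversion of an element of positive even degree trivializes $\omega$ and forces periodicity, killing the odd classes), regular (inherited from $\mathfrak{X}$), noetherian, and of finite Krull dimension. \Cref{cor:check-K-0} then yields Condition~\ref{cb} for each $A[(x_{i_1}\cdots x_{i_k})^{-1}]$, and \Cref{quotientbytwoelements} applied to the sequence $x_1, \ldots, x_n$ assembles these into Condition~\ref{cb} for $A$.

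For the second assertion, suppose $X$ is flat over $\spec \mathbb{Z}$. Idempotents of $A$ agree with idempotents of $\pi_0 A$, and under the descent identification these correspond to clopen decompositions of the underlying classical stack. Any nontrivial such decomposition produces a nonempty, flat-over-$\mathbb{Z}$ open substack whose $\otop$-sections are $A[e^{-1}]$; flatness then forces the rationalization to be nontrivial, verifying that $A$ has no nontrivial torsion idempotents.

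The main obstacle is the evenness/regularity step: verifying that after inverting $x_{i_1} \cdots x_{i_k}$ the $\mathbb{E}_\infty$-ring $\otop(U_I)$ really has even, regular, noetherian homotopy of finite Krull dimension. The passage from evenness of $\mathfrak{X}$ as a derived DM stack to evenness of $\pi_* \otop(U_I)$ requires showing that the descent spectral sequence on the affine piece $U_I$ degenerates (so no odd homotopy is produced by differentials), which should be a standard consequence of affineness combined with the even periodic structure, but is the place where a careful argument is needed. Once this is in hand, the reduction to \Cref{cor:check-K-0} and \Cref{quotientbytwoelements} is formal.
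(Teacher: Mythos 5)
Your proposal follows the paper's argument in all essential respects: extract homogeneous elements $x_1,\ldots,x_n$ from the quasi-affineness of $X \to B\mathbb{G}_m$, check Condition~\ref{cb} on each localization $A[(x_{i_1}\cdots x_{i_k})^{-1}]$ using \Cref{cor:check-K-0}, and assemble via \Cref{quotientbytwoelements}; the treatment of torsion idempotents via flatness of the underlying classical stack is likewise the paper's route. Two details you gloss over are worth filling in. First, the sections $s_i \in H^0(X,\omega^{\otimes \bullet})$ do not automatically survive the descent spectral sequence; the paper notes that a \emph{power} of each $s_i$ does (citing the argument of \cite[Prop.~3.24]{MM15}), and one replaces $s_i$ by such a power before proceeding. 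Second, the evenness/regularity step you flag as the main obstacle is handled in the paper not by a direct degeneration-of-spectral-sequence argument on $U_I$, but by the observation that $\otop(X)[s_i^{-1}]$ arises as the global sections of an even periodic derived stack of the form $\mathrm{Spec}(R_{i,*})/\mathbb{G}_m$ with $R_{i,*}$ regular, noetherian, graded-commutative, and concentrated in even degrees; exactness of $\mathbb{G}_m$-invariants then gives $\pi_*\bigl(\otop(X)[s_i^{-1}]\bigr) \simeq R_{i,*}$ directly, so the hypotheses of \Cref{cor:check-K-0} are visibly met. Your heuristic that ``inversion of a positive even degree element forces periodicity, killing the odd classes'' conflates periodicity with evenness and does not by itself address contributions from $H^s$ with $s>0$; the actual content is that there are none because the stack is a $\mathbb{G}_m$-quotient of an affine.
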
 
\begin{proof} 
The quasi-affineness hypothesis is equivalent to the assertion that there are sections 
$s_1, \dots, s_n \in H^0(X, \omega^{\otimes \bullet})$   such that $X[s_i^{-1}]$ is affine over $B
\mathbb{G}_m$, i.e., arises as the quotient of a $\mathbb{G}_m$-action (or
grading) on the spectrum of a graded ring $R_{i,*}$, and such that the
$\left\{s_i\right\}$ have no common vanishing locus. 
Note that a power of each $s_i$ survives the descent spectral sequence (cf.
the argument of \cite[Prop.~3.24]{MM15});
by passage to such a power, we may assume that $s_i  $ arises from an element
(which for convenience we still denote by $s_i$) in $\pi_{2n_i} \otop(X)$. 

We observe that $\otop(X)[s_i^{-1}]$ arises as the sheaf of global sections of
an even periodic derived stack of the form $\mathrm{Spec}(R_{i,*})/\mathbb{G}_m$
where $R_{i,*}$ is a regular noetherian graded-commutative ring concentrated in
even degrees. In particular, we get
\[ \pi_*( \otop(X))[s_i^{-1}] \simeq R_{i,*},  \]
so by \Cref{cor:check-K-0} we find that $\otop(X)[s_i^{-1}]$ satisfies
Condition~\ref{cb}. 
Similarly, for any collection of indices $i_1, \dots , i_k$, we have that
$\otop(X)[ (s_{i_1} \dots s_{i_k})^{-1}]$ satisfies Condition~\ref{cb}. It
follows that $\otop(X)$ satisfies Condition~\ref{cb} in view of \Cref{grZdesc}.

Finally,  one sees that the idempotents of
$\otop(X)$ correspond precisely to the idempotents in the discrete
ring $\Gamma(X, \mathcal{O}_X)$, and by hypotheses the latter is
torsion-free. 
In fact, on the \'etale site of $X$, we consider the following two presheaves
of $\mathbb{E}_\infty$-rings: the first sends $ \spec R \to X$ to $\otop( \spec
R)$ and the second to $R$ itself. The idempotents of 
each are canonically identified. Since the functor $\mathrm{Idem}\colon \clg(
\sp) \to \mathrm{Set}$ commutes with limits, we find the identification 
$\mathrm{Idem}( \Gamma(X, \otop)) \simeq \mathrm{Idem}( \Gamma(X,
\mathcal{O}_X))$. 
\end{proof}

Let $\mell$ denote the moduli stack of elliptic curves (\cite{deligne-rapoport}, \cite{katz-mazur}, \cite{conrad}). Let $\mellb = (\mell,
\otop)$ be the Goerss-Hopkins-Miller derived stack, i.e., the 
pair of $\mell$ together with the sheaf $\otop$ of even
periodic, elliptic $\mathbb{E}_\infty$-ring
spectra that they construct. 
For any \'etale morphism of DM-stacks $X \to \mell$, we can form
the $\mathbb{E}_\infty$-ring of sections
$\otop( X)$. For example, we have by definition $\otop(\mell) = \TMF$, see \cite{tmf_book}
for more details.

\begin{thm} 
\label{TMFcb}
Given any representable, \'etale and separated morphism of DM-stacks $X \to \mell$, the $\mathbb{E}_\infty$-ring
$\otop(X)$ satisfies Condition~\ref{cb} and has no nontrivial torsion idempotents. In
particular, any Galois extension 
of $\otop(X)$ satisfies Condition~\ref{condition}.
\end{thm}

\begin{proof} 
The final claim follows from the first one together with \Cref{k0criterion}.
By \Cref{1primecondition}, it suffices to prove Condition~\ref{cb} for $\otop(X)_{(p)}$ for a
prime number $p$. We consider three different cases.

\begin{enumerate}
\item  $p \geq 5$. 
In this case,  we have an equivalence of stacks $(\mell)_{(p)} \simeq \mathbb{P}(4, 6)[\Delta^{-1}]$.
Note that $X_{(p)}$ is quasi-affine over 
$(\mell)_{(p)}$ by Zariski's main theorem \cite[Th.~18.12.13]{EGAIV-3}. By \Cref{qaffbgm}, we find that $\otop(X)_{(p)}$ satisfies Condition~\ref{cb}.
\item  $p = 3$. In this case, we have an equivalence $\TMF_{(3)}  \otimes C
\simeq \TMF_1(2)_{(3)}$ for a three cell complex $C$ with even cells (cf.\ 
\cite[Thm.~4.13]{Htmf}). 
The moduli stack $Y = M_{ell, 1}(2)_{(3)}$ has the property that 
$Y\to B\mathbb{G}_m$ is affine (cf. the explicit presentation in \cite[\S
7]{St12}). Note also that 
$Y\to (\mell)_{(3)}$ is a finite \'etale cover.
It follows that $X_{(3)} \times_{\mell}Y $ is quasi-affine over $B
\mathbb{G}_m$ and the map  
of $\mathbb{E}_\infty$-rings
\[ \otop(X)_{(3)}  \to \otop(X_{(3)} \times_{\mell} Y )
\]
exhibits the target as a perfect module over the source; in fact
by 0-affineness of the derived moduli stack (cf.\ \cite[Thm.~7.2]{MM15}) 
\[  \otop(X_{(3)} \times_{\mell}Y )  \simeq \otop( X_{(3)}) \otimes_{\TMF_{(3)}} \TMF_1(2)_{(3)}\simeq \otop(X)_{(3)}
\otimes C. \]

By \Cref{qaffbgm}, 
$\otop(X_{(3)} \times_{\mell}Y )$
satisfies Condition~\ref{cb} and has no nontrivial torsion idempotents. 
In addition, $$\otop(X_{(3)} \times_{\mell}Y ) \otimes_{\otop( X_{(3)})}
\otop(X_{(3)} \times_{\mell}Y )
\simeq  \otop(Y\times_{\mell}Y ) 
$$
also has no nontrivial torsion idempotents. 
The complex $C$ enables us to conclude that 
$\otop(X)_{(3)}$ satisfies Condition~\ref{cb} and has no nontrivial
torsion idempotents, by 
\Cref{criterionforgaloisdesc}. 
\item $p = 2$. Here we argue similarly as in (2), using an eight cell complex $DA(1)$
and the equivalence $\TMF_{(2)} \otimes DA(1)  \simeq \TMF_1(3)_{(2)}$ due to
Hopkins-Mahowald (see \cite[\S~4]{Htmf}).
\end{enumerate}

\end{proof} 

\begin{remark} It seems remarkable that to establish Condition~\ref{cb}
for Galois extensions of $\TMF$,
we need the existence of certain specific finite complexes. In particular, we do not know the analogous result for the finite Galois extensions provided by the theory of topological automorphic forms from 
\cite{TAF}. In fact, we do not know a single example of a finite Galois extension which violates Condition~\ref{condition}.
\end{remark}

\begin{example} 
Consider the Galois extension $\TMF[1/n]=\mathcal{O}^{\mathrm{top}}(M_{ell}\left[ \frac{1}{n}\right])
\to \TMF(n)$ with Galois group $GL_2(
\mathbb{Z}/n)$ (cf.\ \cite[Thm.~7.6]{MM15}). 
It follows that the map 
\[ K( \TMF[1/n]) \to K( \TMF(n))^{h GL_2(\mathbb{Z}/n\mathbb{Z})}   \]
becomes an equivalence after any periodic localization. 
\end{example}

We now describe the analog of our results for the Hill-Lawson extension of the
sheaf $\otop$ on the \'etale site of $\mell$. 
Let $\mellc$ denote the compactified moduli stack of elliptic curves. 
In \cite{HiLa16}, Hill and Lawson describe the \emph{log-\'etale site} of
$\mellc$ and endow it with a sheaf  $\otop$ of $\mathbb{E}_\infty$-ring
spectra which restricts to the previous sheaf on the \'etale site of $\mell$.

\begin{thm} \label{thm:log-tmf}
Let $(X, M_X)\to \mellc$ be a representable and separated log-\'etale map of DM-stacks. Then $\otop(X, M_X)$ satisfies Condition~\ref{cb} and has no nontrivial
torsion idempotents. For example, every finite Galois extension of the
$\mathbb{E}_\infty$-ring spectra $\Tmf,
\Tmf_0(n), \Tmf_1(n)$ ($n\ge 1$) satisfies Condition~\ref{condition}.
\end{thm} 
\begin{proof} 
We use \Cref{checkaftercompletion}. 

After inverting the modular form $\Delta^{24} \in \pi_{576} \Tmf$, the log-\'etale site of $\mellc$ is
identified with the \'etale site of $\mell$.  
It follows that $\otop( X, M_X)[\Delta^{-24}]$ satisfies Condition~\ref{cb}
in view of \Cref{TMFcb}. 

The $\mathbb{E}_\infty$-ring $\otop(X)$, and more generally the sheaf $\otop$, is defined in \cite{HiLa16} by gluing 
together $\otop( X \times_{\mellc}\mell)$ and the completion at the cusp. In
particular, we
can also evaluate $\otop$ on log-\'etale morphisms to the completion. 
Now, we need to consider the completion $\otop(\widehat{X}) = \widehat{\otop(X, M_X)}_{\Delta^{24}}$. 
This is obtained by completing the stack $X$ at the modular form $\Delta$. 
Note first that $\widehat{(\mellc)}_{\Delta} \simeq \left( \mathrm{Spf}
\mathbb{Z}[[q]] \right)/C_2$ via the Tate curve and its automorphism given by
$-1$. 
We consider the $\Delta$-completed log stack $\widehat{Y} = X \times_{\mellc} \mathrm{Spf}
\mathbb{Z}[[q]]$, so that $\widehat{Y}$ is log-\'etale over the log-scheme $( \mathrm{Spf}
\mathbb{Z}[[q]], q)$. 
We claim first that $\otop( \widehat{Y})$ is even periodic and $\pi_0
\otop(\widehat{Y})$ is
regular. 
This follows from the discussion \cite[Cor.~2.19]{HiLa16} of the log-\'etale site of $(\mathbb{Z}[[q]],
q)$ and the construction in \cite[Sec.~5.1]{HiLa16}. Finally, the map
$\otop(\widehat{X}) \to \otop( \widehat{Y})$ exhibits an equivalence
\[ \otop( \widehat{X}) \otimes \Sigma^{-2}\mathbb{CP}^2 \simeq \otop(
\widehat{Y}),  \]
by Wood's theorem, which implies that the $C_2$-action on 
$\otop(
\widehat{Y}) \otimes \Sigma^{-2} \mathbb{CP}^2 \simeq KU[[q]] \otimes
\Sigma^{-2}\mathbb{CP}^2$ is given by the coinduced representation. Therefore, we find by
descent (\Cref{criterionforgaloisdesc}) along $\otop( \widehat{X}) \to \otop( \widehat{Y})$ and 
$\otop( \widehat{X})[q^{-1}] \to \otop( \widehat{Y})[q^{-1}]$
that $\otop( \widehat{X}), \otop( \widehat{X})[q^{-1}]$ satisfy
Condition~\ref{cb}. Putting everything together, we find by
\Cref{checkaftercompletion} that $\otop(X)$ satisfies Condition~\ref{cb}. 
\end{proof}

\begin{example} 
Let $n$ be square-free. Then we have a $(\mathbb{Z}/n)^{\times}$-Galois
extension $\Tmf_0(n) \to \Tmf_1(n)$ under $\Tmf[1/n]$ \cite[Thm.~7.12]{MM15}. 
This Galois extension satisfies Condition~\ref{condition}, so that the map
\[ K( \Tmf_0(n)) \to K( \Tmf_1(n))^{h (\mathbb{Z}/n)^{\times}}  \]
is an $\epsilon$-equivalence. 
\end{example}

Finally, for completeness we give an example of a torsion $\mathbb{E}_\infty$-ring that does not
satisfy Condition~\ref{cb}. We do not know any non-torsion examples. 
\begin{example} 
 Let $G$ be a nontrivial finite $p$-group and let $k$ be a
field of characteristic $p$. Then there is an identification between
$\mathrm{Perf}(k^{tG})$ and the \emph{stable module $\infty$-category} of
finite-dimensional $k[G]$-modules modulo projectives (cf.\ \cite{Kel94} or
\cite{MTorus}). Using this, we can calculate $K_0(k^{tG})$.

Every
finite-dimensional $k[G]$-representation has a finite filtration with subquotients given by the
trivial representation, and the representation $k[G]$ is identified with zero
in the stable module $\infty$-category. This forces $K_0(k^{tG}) \simeq
\mathbb{Z}/r$ where $r \mid |G|$. We also have a homomorphism $K_0(k^{tG}) \to
\mathbb{Z}/|G|$ that sends a representation to its dimension modulo $|G|$. 
Since two representations become isomorphic in the stable module
$\infty$-category if and only if they are \emph{stably isomorphic} as
representations (i.e., become isomorphic after adding free summands), 
it follows that $K_0( k^{tG}) \simeq \mathbb{Z}/|G|$. 
In particular, $K_0(k^{tG}) \otimes \mathbb{Q} = 0$. 
\end{example} 

\subsection{Non-Galois examples of descent}

In this subsection, we record a few additional examples where one has descent
but which are not Galois.  

\begin{example} \label{ex:koconn}
We consider the connective version of \Cref{ex:ko} above. 
Consider the map of $\mathbb{E}_\infty$-rings $\ko \to \ku$. 
Since $\ku \simeq \ko \otimes \Sigma^{-2} \mathbb{CP}^2$, we observe that the class of $\ku$
in $K_0(\ko)$ is equal to $2$. 
As a result, 
we conclude by 
\Cref{finitedesc} that we have 
an $\epsilon$-nilpotent limit diagram
given by the augmented cosimplicial object
\[ K(\ko ) \to \left( K( \ku) \rightrightarrows K( \ku \otimes_{\ko} \ku)
\triplearrows \dots \right).    \]

To compare this to \Cref{ex:ko}, we consider the following diagram of
localization sequences \cite{BM08,BL14}:
\[ \xymatrix{
K(\mathbb{Z}) \ar[d] \ar[r] & K(\ko) \ar^{\simeq_\epsilon}[d] \ar[r] & K(\KO)\ar^{\simeq_\epsilon}[d]\\
\mathrm{Tot}(F^\bul) \ar[d] \ar[r] & \mathrm{Tot}(K(\ku^{\otimes_{\ko}\bul+1})) \ar[d] \ar[r] & \mathrm{Tot}(K(\KU^{\otimes_{\KO}\bul+1})) \ar^{\simeq}[d]\\ 
F({BC_2}_+, K(\mathbb{Z}))\simeq  K(\mathbb{Z})^{hC_2} \ar[r] & K(\ku)^{hC_2}\ar[r] & K(\KU)^{hC_2}
}\]

We have established the first two $\epsilon$-equivalences in the first row of
vertical arrows, which implies the induced map on the fibers is an
$\epsilon$-equivalence. 
 It follows that $K(\ko)\to K(\ku)^{hC_2}$ is an $\epsilon$-equivalence if and only if the composite map $K(\mathbb{Z})\to F({BC_2}_+, K(\mathbb{Z}))$ is an $\epsilon$-equivalence. 
However, by comparing with $\KU$ one sees that the map 
\[ K(\mathbb{Z}) \to K(\mathbb{Z})^{hC_2}  \]
is not even a rational equivalence. 
To see this, we observe that the unit and the transfer of the unit 
from $\pi_0 K(\mathbb{Z})  \otimes \mathbb{Q}\xrightarrow{\mathrm{Tr}}
\pi_0(K(\mathbb{Z})^{hC_2}) \otimes \mathbb{Q}$
are
linearly independent in $\pi_0( K(\mathbb{Z})^{hC_2}) \otimes \mathbb{Q}$
because they are linearly independent 
in 
$\pi_0( \KU^{hC_2})\otimes \mathbb{Q}$ and we have a $K$-theoretic map
$K(\mathbb{Z}) \to \KU$. As a result, we conclude that $K(\ko) \to
K(\ku)^{hC_2}$ is \emph{not} an $\epsilon$-equivalence (or even a rational
equivalence).  The same argument also shows that $L_{K(1)}K(\ko)\to
(L_{K(1)}K(\ku))^{hC_2}$ fails to be an equivalence when $K(1)$ is the first Morava $K$-theory at the prime 2.

On the other hand, it is evident that if we rationalize before taking homotopy fixed points we obtain an equivalence $K(\mathbb{Z})\otimes \mathbb{Q}\to (K(\mathbb{Z})\otimes\mathbb{Q})^{hC_2}$ and hence the vertical composites are all equivalences if we rationalize first and then take homotopy fixed points.
\end{example} 

\begin{example} \label{ex:tmf13}
Consider the map $\tmf[1/3] \to \tmf_1(3)$. 
We claim that this satisfies the assumptions of \Cref{finitedesc}, so we have an
$\epsilon$-nilpotent limit diagram
\[ K(\tmf[1/3] ) \to \left( K( \tmf_1(3)) \rightrightarrows K( \tmf_1(3)
\otimes_{\tmf[1/3]} \tmf_1(3))
\triplearrows \dots \right).    \]
In fact, it suffices to show that the class $[\tmf_1(3)] - 8 $ is nilpotent in
$K_0( \tmf[1/3])$. It suffices to check this after localizing at 2 and inverting
2. When localizing at 2, this follows from the complex $DA(1)$ and the
equivalence $\tmf_{(2)} \otimes DA(1) \simeq \tmf_1(3)_{(2)}$ (cf.\ 
\cite[\S 4]{Htmf}). After inverting $2$, we see the equivalence just by
considering homotopy. 
\end{example}

\appendix

\section{\'Etale descent for spectral algebraic spaces}

\label{sec:etaledesc} 

We have seen in Proposition \ref{finiteetalesatisfied} that periodically
localized algebraic $K$-theory (or indeed any periodically localized additive
invariant) satisfies finite flat descent on $\mathbb{E}_\infty$-rings.
In this appendix, we will describe the argument for Nisnevich descent for such
invariants in the setting of spectral algebraic spaces as in \cite[Ch. 3]{lurie_sag}. This argument is due to
Thomason-Trobaugh \cite{TT90}, and we will indicate the necessary modifications
in the present setting.
Compare also the treatment by Barwick \cite[Prop.~12.12]{Bar16}. 
As a result, one obtains a basic \'etale descent result
(\Cref{thm:etaledesc} below) which applies to invariants such as algebraic
$K$-theory after periodic localization.

We need the definition of a \emph{localizing invariant} in $\catst$, as in
\cite{BGT13}, although we shall not assume that our invariant commutes with
filtered colimits.

\begin{definition} 
Let $\mathcal{R} \in \clg( \catst)$. 
Suppose $\mathcal{A}_1 \to \mathcal{A}_2$ is a morphism 
in $\md_{\mathcal{R}}(\catst)$ which is fully faithful on underlying
$\infty$-categories. 
The \emph{Verdier quotient} $\mathcal{A}_2/\mathcal{A}_1$ is the pushout
$\mathcal{A}_2 \cup_{\mathcal{A}_1} \left\{0\right\}$ in $\md_{\mathcal{R}}(\catst)$. 
A \emph{weakly localizing invariant} of $\mathcal{R}$-linear $\infty$-categories with
values in a presentable, stable $\infty$-category $\mathcal{D}$ is a functor
\[ F \colon  \md_{\mathcal{R}}(\catst) \to \mathcal{D}  \]
which carries Verdier quotient sequences to cofiber sequences in
$\mathcal{D}$. 
It follows in particular that $F$ is weakly additive in the sense of \Cref{weaklyadditive}.
\end{definition}

\newcommand{\qcoh}{\mathrm{QCoh}}
Let $X$ be 
a quasi-compact quasi-separated (hereafter \emph{qcqs}) spectral algebraic space. 
Recall \cite[Ch.~2]{lurie_sag} that one has a presentable, symmetric monoidal stable
$\infty$-category $\qcoh(X)$ of \emph{quasi-coherent sheaves} on $X$.
By \cite[Prop.~9.6.1.1]{lurie_sag}, the $\infty$-category $\qcoh(X)$ is
compactly generated, and the compact objects are 
given by the dualizable objects, which are denoted $\perf(X)$.

\begin{remark} 
The compact generation of $\qcoh(X)$ has a long history. 
For classical quasi-compact, separated schemes, the result is due
to Neeman \cite[Prop. 2.5]{Nee96}. 
For classical qcqs schemes, the result appears in \cite[Thm.~3.1.1]{BvB03}. 
That argument is extended to derived schemes in \cite[Prop.~3.19]{BZFN10}. 
\end{remark} 
Here $\perf(X) \in \clg(\catst)$. 
Given a morphism $f \colon Y \to X$ of qcqs spectral algebraic spaces, one obtains a symmetric monoidal pull-back
functor $f^* \colon \qcoh(X) \to \qcoh(Y)$ which restricts to dualizable or
compact objects and yields a functor $f^* \colon \perf(X) \to \perf(Y)$.

\begin{definition} 
We denote by $X_{\acute{e}t}$ the \'etale site of $X$, which has objects 
\'etale maps $U\to X$ with $U$ quasi-compact, and carries the \'etale 
topology. \end{definition} 
Our descent result is the following.

\begin{thm}
\label{thm:etaledesc}
Let $X$ be a qcqs spectral algebraic space.  Suppose that
$F\colon \md_{\operatorname{Perf}(X)}(\catst)\rightarrow \mathcal{D}$ 
is a weakly localizing invariant where $\mathcal{D}$ is $T(n)$-local for some implicit
prime $p$ and height $n$.
Then the presheaf
$$(U \to X) \mapsto F(\perf(U))$$
on the \'etale site $X_{\acute{e}t}$ of $X$ is a sheaf.
\end{thm}

\newcommand{\qst}{\mathrm{QStk}^{\mathrm{st}}}

For the proof, we will require a local-to-global argument for which the theory
of \emph{stable quasi-coherent stacks} of \cite[Ch.~10]{lurie_sag} will be 
useful.
 A \emph{stable quasi-coherent stack} $\mathcal{C}$ on $X$
assigns to every $R$-point $\eta$ of $X$ (for $R$ a connective
$\mathbb{E}_\infty$-ring) an $R$-linear presentable stable
$\infty$-category $\mathcal{C}_{\eta}$.  
Given a map $f \colon R \to R'$ of connective $\mathbb{E}_\infty$-rings, we
obtain a new $R'$-point of $X$ given by $f^* \eta$ and we have a compatibility
equivalence 
$\mathcal{C}_{f^* \eta} \simeq \mathcal{C}_{\eta} \otimes_R R'$. 
More precisely: 

\begin{definition}[{\cite[Def. 10.1.2.1]{lurie_sag}}]
The $\infty$-category $\qst(X)$ of stable quasi-coherent stacks on $X$ is defined
as $\varprojlim_{\spec R \to X} \mathrm{LinCat}^{st}_R$ where
$\mathrm{LinCat}^{st}_R$ denotes the $\infty$-category of presentable
$R$-linear $\infty$-categories. 
Given a stable quasi-coherent stack $\mathcal{C}$, we can form the \emph{global
sections} $\qcoh(X; \mathcal{C})$ which form a $\qcoh(X)$-linear presentable
stable $\infty$-category, cf. \cite[Construction 10.1.7.1]{lurie_sag}.
\end{definition}

We remind the reader of the relevant notion of support. Let $X$ be  a qcqs spectral algebraic space, $Z\subseteq |X|$ a closed subset, and assume that the complementary open subset $j \colon U \subset X$ is quasi-compact. 

\begin{definition}[{\cite[Def. 7.1.5.1]{lurie_sag}}]
Let $\qcoh_Z(X) \subset \qcoh(X)$ 
be the full 
subcategory of those $\mathcal{F} \in \qcoh(X)$ such that $j^* \mathcal{F} =
0$. 
Let $\perf_Z(X) = \qcoh_Z(X) \cap \perf(X)$ denote those perfect modules on
$X$ which restrict to zero on $U$. 
\end{definition}

We now show that $\qcoh_Z(X)$ is compactly generated,
with compact objects precisely $\perf_Z(X)$. This will be relatively easy to
check when $X$ is affine. 
In general, the basic local-to-global observation that will be used throughout
is that $\qcoh_Z(X)$ arises from a
stable quasi-coherent stack.

\begin{cons}
We define the following three quasi-coherent stacks $\mathcal{C}_1,
\mathcal{C}_2, \mathcal{C}_3 \in \qst(X)$ on $X$. Let $\eta \colon \spec R \to X$ be an $R$-point of $X$ (with $R$ a connective $\mathbb{E}_\infty$-ring).
\begin{enumerate}
\item  We define $\mathcal{C}_1$ via $(\mathcal{C}_1)_{\eta} = \md(R) = \qcoh( \spec R)$ itself. This is the unit in the
$\infty$-category of stable quasi-coherent stacks.
Clearly $\qcoh(X; \mathcal{C}_1) \simeq \qcoh(X)$ by definition.
\item We define $\mathcal{C}_2$ via $(\mathcal{C}_2)_{\eta} = \qcoh( \spec R \times_X U)$. This is the
push-forward of the unit in $\qst(U)$ along $j \colon U \to X$.
We have $\qcoh(X; \mathcal{C}_2) \simeq \qcoh(U)$. 
\item 
Let $(\mathcal{C}_3)_{\eta} \subset \qcoh( \spec R) = \md(R)$ denote the
subcategory of those quasi-coherent sheaves on $\spec R$ which restrict to zero
on $\spec R \times_X U \subset \spec R$. Then
$\left\{(\mathcal{C}_3)_{\eta}\right\}_{\eta \colon \spec R \to X}$ assembles into
a quasi-coherent stack $\mathcal{C}_3$: in fact, $\mathcal{C}_3$ is the
pull-back $ \mathcal{C}_1 \times_{\mathcal{C}_2} 0$. We note that limits in
$\qst(X)$ are computed pointwise (cf. \cite[10.1.3]{lurie_sag}). 
Unwinding the definitions again, we find that $\qcoh(X; \mathcal{C}_3) \simeq
\qcoh_Z(X)$.
\end{enumerate}
\end{cons}

\begin{lemma} 
\label{loccompactgen}
Suppose $X = \spec(A)$ is an affine spectral algebraic space. 
Then $\qcoh_Z(X)$ is compactly generated, and the inclusion $\qcoh_Z(X)\subset
\operatorname{QCoh}(X)$ preserves compact objects.
\end{lemma}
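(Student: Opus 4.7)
The plan is to give a concrete description of $Z$ in the affine case, reduce to a Koszul-type perfect complex, and then apply Neeman's compact generation theorem. Since $U$ is quasi-compact and open in $X=\spec A$, there exist finitely many elements $f_1,\ldots,f_n\in\pi_0(A)$ with $U=\bigcup_{i=1}^n\spec A[f_i^{-1}]$, so that $\qcoh_Z(X)$ identifies with the full subcategory of those $A$-modules $M$ satisfying $M[f_i^{-1}]=0$ for every $i$. I would then form the object
$$K:=\operatorname{cofib}(f_1\colon A\to A)\otimes_A\cdots\otimes_A\operatorname{cofib}(f_n\colon A\to A),$$
which is perfect as a tensor product of perfect $A$-modules, and which visibly lies in $\perf_Z(X)$ because $\operatorname{cofib}(f_i)[f_i^{-1}]=0$ for each $i$.

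The key step is to identify $\qcoh_Z(X)$ with the localizing subcategory $\langle K\rangle_{\mathrm{loc}}\subset\qcoh(X)$ generated by $K$. The inclusion $\langle K\rangle_{\mathrm{loc}}\subset\qcoh_Z(X)$ is immediate. For the reverse inclusion, I would use that the localization $A\to A[f^{-1}]$ is smashing, so for any $A$-module $M$ with $M[f^{-1}]=0$ one has $M\simeq M\otimes_A\Gamma_f(A)$, where $\Gamma_f(A):=\operatorname{fib}(A\to A[f^{-1}])\simeq\Sigma^{-1}\operatorname{colim}_k A/f^k$. The cofiber sequences $A/f^k\to A/f^{k+1}\to A/f$ show inductively that each $A/f^k$, and hence $\Gamma_f(A)$, lies in the localizing subcategory of $\md(A)$ generated by $A/f$; and a standard argument (write any $A$-module as a filtered colimit of perfect ones and use exactness of $-\otimes_A A/f$) shows that the functor $N\otimes_A(-)$ carries $\langle A/f\rangle_{\mathrm{loc}}$ into $\langle N\otimes_A A/f\rangle_{\mathrm{loc}}$ for every $N$. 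Iterating this reasoning over $f_1,\ldots,f_n$ gives $M\simeq M\otimes_A\bigotimes_i\Gamma_{f_i}(A)\in\langle K\rangle_{\mathrm{loc}}$.

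Once this identification is in hand, the lemma follows at once from Neeman's compact generation theorem: a localizing subcategory of a compactly generated stable $\infty$-category that is generated by a set of compact objects of the ambient category is itself compactly generated, with compact objects given by the thick closure of the generators, computed in the ambient category. Applied to the single compact generator $K\in\perf(X)$, this yields simultaneously that $\qcoh_Z(X)$ is compactly generated and that $\qcoh_Z(X)^{\omega}\subset\perf(X)$, as required. The main obstacle above is the iterated Koszul identification $\qcoh_Z(X)=\langle K\rangle_{\mathrm{loc}}$, which requires some bookkeeping of how localizing subcategories interact under tensor products, but ultimately reduces to classical facts about smashing localizations in $\md(A)$.
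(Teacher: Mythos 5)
Your argument is correct and follows essentially the same route as the paper's: both reduce to $A$-modules $M$ with $M[f_i^{-1}]=0$ and identify the iterated cofiber (Koszul complex) $K = A/(f_1,\dots,f_n)$ as the compact generator. Where you diverge is in the finishing move. The paper writes down an explicit formula for the right adjoint to the inclusion $\qcoh_Z(X)\hookrightarrow\qcoh(X)$, namely $M\mapsto \mathrm{colim}_n\bigl(M\otimes_A\Sigma^{-k}A/(f_1^n,\dots,f_k^n)\bigr)$, observes that this visibly commutes with colimits, and deduces that the left adjoint preserves compacts; you instead establish $\qcoh_Z(X)=\langle K\rangle_{\mathrm{loc}}$ via the local-cohomology objects $\Gamma_{f_i}(A)$ and invoke the Thomason--Neeman localization theorem, which delivers compact generation and preservation of compacts simultaneously. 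Both are valid and of comparable length; the paper's version is more self-contained, while yours outsources the conclusion to a standard citation.

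One small point to tighten: as written, your iteration naturally produces $M\in\langle M\otimes_A K\rangle_{\mathrm{loc}}$ rather than $M\in\langle K\rangle_{\mathrm{loc}}$. To finish you should observe explicitly that $\langle K\rangle_{\mathrm{loc}}$ is a $\otimes$-ideal in $\md(A)$ — this follows from the same ``standard argument'' you invoke, applied once more with $A/f$ replaced by $A$: the collection $\{Y: Y\otimes_A K\in\langle K\rangle_{\mathrm{loc}}\}$ is a localizing subcategory containing the generator $A$, hence all of $\md(A)$. Equivalently, once you know each $\Gamma_{f_i}(A)\in\langle K\rangle_{\mathrm{loc}}$, the ideal property gives $M\otimes_A\bigotimes_i\Gamma_{f_i}(A)\in\langle K\rangle_{\mathrm{loc}}$ directly. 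This is a one-line fix, not a flaw in the strategy.
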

\begin{proof}
This is proved in \cite[Prop.~7.1.1.12, (e)]{lurie_sag} in a more general situation. The argument for the special case at hand runs as
follows. 
Since $U=|X|-Z$ is quasi-compact, we can choose finitely many elements
$f_1,\ldots f_k\in \pi_0A$ such that $U=\bigcup_i \spec(A[f_i^{-1}])$.  Then $\operatorname{QCoh}_Z(X)$ identifies with the full subcategory of $\operatorname{QCoh}(X)=\operatorname{Mod}(A)$ spanned by those $A$-modules $M$ such that
$M[f_i^{-1}]=0$
for all $1\leq i\leq k$. It is not hard to check that one can take as a single compact generator the iterated cofiber $A/(f_1, \dots, f_k)$. The counit of the right-adjoint
to the inclusion $\qcoh_Z(X)\subset\operatorname{QCoh}(X)$ can be given explicitly as
$\mathrm{colim}_{n}\left( M\otimes_A \Sigma^{-k} A/(f_1^n, \ldots, f_k^n)\right)\to M$
for $M\in\operatorname{QCoh}(X)=\operatorname{Mod}(A)$, and thus the right-adjoint visibly commutes with colimits. It follows that the inclusion preserves compact objects.
\end{proof}

The next result was known previously for classical qcqs schemes, see \cite[Thm. 6.8]{rouquier_dimensions}.

\begin{prop} \label{A1}
If $X$ is a qcqs spectral algebraic space and $Z\subset |X|$ a closed subset with quasi-compact open complement, 
then $\qcoh_Z(X)$ is compactly generated.
In addition, the inclusion $\qcoh_Z(X) \subset \qcoh(X)$ preserves compact objects. 
\end{prop} 
\begin{proof} 
We saw that $\qcoh_Z(X)$ arises as the global sections of a 
stable
quasi-coherent
stack $\mathcal{C}_3$. By \cite[Ex. 10.3.0.2, (4)]{lurie_sag}, the discussion following
\cite[Prop. 10.3.0.3]{lurie_sag}, the fact that $X$ is \'etale-locally affine and \Cref{loccompactgen}, we see that $\mathcal{C}_3$ is compactly generated.
By \cite[Prop. 10.3.2.1, $(b)$]{lurie_sag}, the global sections $\qcoh_Z(X)$ are
compactly generated. 
To see that $\qcoh_Z(X) \subset \qcoh(X)$ preserves compact objects, 
note that by \cite[Prop. 10.3.2.6]{lurie_sag}, the assertion is \'etale-local on $X$, hence we can assume that $X=\spec(A)$ is affine and invoke \Cref{loccompactgen} again.
\end{proof}

\begin{cor} 
In the situation of \cref{A1}, the sequence

\begin{equation} \label{keysequence}  \perf_Z(X) \to \perf(X) \stackrel{j^*}{\to}\perf(U)
\end{equation}
is a Verdier quotient sequence in $\md_{\perf(X)}(\catst)$.
\end{cor} 
\begin{proof} 
This follows from \Cref{A1} which implies that the $\mathrm{Ind}$-completion of
\cref{keysequence} is precisely 
\[ \qcoh_Z(X) \to \qcoh(X) \to \qcoh(U).  \]
Compare also \cite[Prop.~7.2.3.1]{lurie_sag}, which implies that we have a
semi-orthogonal decomposition of $\qcoh(X)$. 
\end{proof} 

\begin{cor}[{Cf. \cite[Prop.~12.12]{Bar16}}] 
If $F \colon \md_{\perf(X)}(\catst) \to
\mathcal{D}$ is any weakly localizing invariant, then we have a fiber sequence in $\mathcal{D}$,
\begin{equation} \label{locseq} F(\operatorname{Perf}_Z(X))\rightarrow F(\operatorname{Perf}(X))\rightarrow
F(\operatorname{Perf}(U)).\end{equation}
\end{cor} 
We call \eqref{locseq} the \emph{localization sequence,} and next establish 
an excision result:

\begin{prop} 
\label{A2}
Let $X, Y$ be qcqs spectral algebraic spaces and let $j \colon U \subset X$ be
a quasi-compact open immersion. 
Suppose $f \colon Y \to X$ is a flat morphism. 
Let $Z$ be the reduced, discrete closed complement of $j$ and suppose the map
$Y \times_X Z \xrightarrow{f \times_X Z } Z$ is an equivalence. Then the map
\[ f^*\colon \qcoh_Z(X) \to \qcoh_{f^{-1}(Z)}(Y)  \]
is an equivalence of $\perf(X)$-linear $\infty$-categories.
\end{prop} 
\begin{proof} 
We need to see that if $M\in\operatorname{QCoh}_Z(X)$, then the map
$M\rightarrow f_\ast f^\ast M$
is an equivalence, and that if $N\in\operatorname{QCoh}_{f^{-1}Z}(Y)$ then $f_\ast N\in \operatorname{QCoh}_Z(X)$ and
$f^\ast f_\ast N\rightarrow N$
is an equivalence.
Without loss of generality, we may assume that $X = \spec(A)$ is affine 
It suffices to check the desired claim on homotopy group sheaves. 
Since $f$ is flat, the functor $f^*$ simply tensors up over $\pi_0$ on homotopy
group sheaves. 
If $M \in \qcoh_{f^{-1} Z} (Y)$, then the homotopy groups of $M$ are supported
on $f^{-1}(Z)$ and therefore have no higher derived pushforwards along $f
\colon Y \to X$. 
As a result, the claim follows from the
analog of our lemma in ordinary algebraic geometry, which is 
\cite[Lemma 5.12, (2)]{bhatt} (itself an extension of \cite[Thm. 2.6.3]{TT90}).
\end{proof}

\begin{prop}
\label{nisnevichexcision}
Let $X$ be a qcqs spectral algebraic space, $U$ a quasi-compact open subset of $X$, $f\colon Y\rightarrow X$ an \'etale map which is an isomorphism above $Z=X\setminus U$, and $F$ a weakly localizing invariant as
above.  Then the diagram
$$\xymatrix{F(\operatorname{Perf}(X))\ar[r]\ar[d] & F(\operatorname{Perf}(U))\ar[d] \\
F(\operatorname{Perf}(Y))\ar[r] & F(\operatorname{Perf}(Y\times_X U)) }$$
is a pullback square.
\end{prop}
\begin{proof}
This will follow from the above localization sequence \eqref{locseq} provided we can show that pullback by $f$ induces an equivalence
$\operatorname{Perf}_Z(X)\simeq \operatorname{Perf}_{f^{-1}Z}(Y).$ (recalling that $F$ takes values in a stable $\infty$-category).
This follows by taking compact objects in  \Cref{A2}.
\end{proof}

Finally, we can prove our main result. 

\begin{proof}[proof of \cref{thm:etaledesc}] 

Since Nisnevich excision is equivalent to Nisnevich descent by
a theorem of Morel-Voevodsky (cf.\ 
\cite[3.7.5.1]{lurie_sag}), it follows from \Cref{nisnevichexcision} that
$U\mapsto F(\perf(U))$ is a Nisnevich sheaf. In particular, showing that it is in fact an
\'etale sheaf is a Nisnevich-local problem, hence by \cite[Ex.~3.7.1.5]{lurie_sag}, we can assume
$X$ is affine. In this case, to conclude \'etale descent, by \cite[B.6.4.1]{lurie_sag} it suffices to establish
finite \'etale descent, which is given by \Cref{finiteetalesatisfied}.

\end{proof}

\begin{remark} 
We note that \Cref{thm:etaledesc} does not extend to the case where $X$ is a
Deligne-Mumford stack in general; counterexamples are easy to come by for the
classifying stacks of finite groups.  
For example, one does not have descent for the $C_2$-Galois cover $\spec \mathbb{C} \to (
\spec \mathbb{C})/(C_2)$, even rationally: we have
\[ K_0((\spec \mathbb{C})/C_2) \simeq R(C_2) \simeq \mathbb{Z}[x]/(x^2 - 1),
 \mbox{ but }
K_0 ( \spec \mathbb{C}) = \mathbb{Z}.
\]
However, \Cref{TMFcb} together with the
derived-affineness result of \cite{MM15} implies that it nonetheless holds for
the spectral Deligne-Mumford moduli stack 
$\mathfrak{M}_{ell}$ underlying the theory of $\TMF$, even though it fails for the underlying usual moduli stack $M_{ell}$.
\end{remark}

\section{Descent for higher real $K$-theories \\by Lennart Meier, Justin Noel,
and Niko Naumann}\label{sec:appendix}
In this appendix we record the existence of finite complexes with controlled Morava $K$-theory,
as implicit in the work of Hopkins, Ravenel, and Smith. These are analogs of the results of Mitchell \cite{Mit85} constructing finite complexes, the mod $p$-cohomology of which is finite free over certain finite sub-algebras of the Steenrod algebra. In our case, the algebra of operations replacing the Steenrod algebra is a group algebra of a Morava stabilizer group, and the finite sub-algebras are given by the group algebras of finite subgroups.

These results will in particular verify the assumption of our main results on descent for the Galois
extensions afforded by higher real $K$-theories.
We refer to  \cite[Sec. 3.1]{HMS15} for a proof that these extensions are
globally Galois.

In order to formulate the results, we start with a reminder on Lubin-Tate theories, cf.\cite{Rez97,GoH05}. 
Fix a prime $p>0$, a perfect field $k$ of characteristic 
$p$ and a (one-dimensional, commutative) formal group $G$ of finite height $n\ge 1$ over $k$.

Associated with this there is the automorphism group $\mathrm{Aut}(G,k)$ of the pair $(G,k)$,
consisting of pairs $(\alpha,\varphi)$ with an automorphism $\alpha:k\to k$ and an isomorphism 
$\varphi:\alpha^*G\to G$ of formal groups over $k$. There is an evident exact sequence of groups 

\begin{equation}\label{eq:extendedstabilzergroup} 1\to \mathrm{Aut}_k(G)\to \mathrm{Aut}(G,k)\to \mathrm{Aut}(k),\end{equation}
the final map sending a pair $(\alpha,\varphi)$ as above to $\alpha$.
The central division algebra $D$ over $\mathbb{Q}_p$ of invariant $1/n$ admits a unique maximal order $\mathcal{O}_D\subset D$, and its group of units $\mathcal{O}_D^*$
is isomorphic to the automorphism group of the unique formal group of height $n$ over $\overline{\mathbb{F}}_p$. Since $G$ becomes isomorphic to this group over an algebraic closure of 
$k$, we can identify $\mathrm{Aut}_k(G)\subset \mathcal{O}_D^*$ as a closed subgroup.

The action of $\mathrm{Aut}_k(G)$ on the Lie-algebra of $G$ over $k$ affords a character, and we denote by 
$\mathrm{Aut}_k^1(G)$ its kernel:

\[ 1\to \mathrm{Aut}_k^1(G)\to \mathrm{Aut}_k(G)\to k^*.\]
It is easy to see that $\mathrm{Aut}_k^1(G)$ is a pro-$p$-group, and hence the unique pro-$p$-Sylow-subgroup
of $\mathrm{Aut}_k(G)$ because it is normal.

There is an even periodic $\mathbb{E}_\infty$-ring spectrum $E(G,k)$ acted upon by $\mathrm{Aut}(G,k)$ such that $\pi_0 E(G,k)\cong W(k)[[u_1,\ldots, u_{n-1}]]$ identifies with the universal deformation ring
of $G$ over $k$. This $E(G,k)$ is Lubin-Tate theory.
One can construct a map of $\mathbb{E}_1$-algebras $E(G,k)\to K(G,k)$ which on $\pi_0$ has the effect of 
quotienting out the maximal ideal \cite[Cor.~3.7]{Ang08}. The ring spectrum $K(G,k)$ is even periodic with $\pi_0K(G,k)\cong k$. This $K(G,k)$ is the associated Morava $K$-theory. For every spectrum $X$, $K(G,k)^0(X)$ is canonically a continuous
module over the completed twisted group-ring $k[[\mathrm{Aut}(G,k)]]$
 (twisted with respect to the action of $\mathrm{Aut}(G,k)$ on $k$ given by  \Cref{eq:extendedstabilzergroup}), and for every finite subgroup $H\subset\mathrm{Aut}(G,k)$, the twisted group ring $k[H]\subset k[[\mathrm{Aut}(G,k)]]$ is a finite-dimensional sub-algebra.

Our existence result for finite complexes is the following.

\begin{thm}\label{thm:existence_finite_complexes} 
For every finite subgroup $H\subset \mathrm{Aut}(G,k)$, there exists a finite, $p$-local complex $X$ with cells in even dimensions such that $K(G,k)^0(X)$ is a non-trivial, finite free $k[H]$-module. 
\end{thm}


Given a finite subgroup $H\subset \mathrm{Aut}(G,k)$, the spectrum $E(G,k)$ is a (Borel complete) 
commutative algebra in genuine $H$-spectra, see for example \cite[\S 6.3]{MNN15i} for background on this. By a {\em semi-linear $E(G,k)$-$H$-module}, we will mean an $E(G,k)$-module internal to Borel complete genuine $H$-spectra. The first example is $M=E(G,k)$, as is more generally $M=E(G,k)\wedge X$ for any finite spectrum $X$, endowed with the $H$-action through the first smash factor. 

The free example is the $E(G,k)$-module $M=H_+\wedge E(G,k)$, endowed with the diagonal $H$-action. By the projection formula \cite[Prop.~5.14]{MNN15i}, this is equivalent to $\Ind_{ \{ e \} }^H\Res_{ \{ e \} }^H E(G,k)$. Since $H$ is finite, induction and coinduction agree, and given any semi-linear $E(G,k)$-$H$-module $N$, the datum of a homotopy class of a semi-linear map $map(H,E(G,k))\simeq \Coind_{ \{ e \} }^H\Res_{ \{ e \} }^H E(G,k)\to N$ is equivalent to the datum of the element of $\pi_0 N$ obtained by evaluation at the unit.

We denote by $DX=F(X,S)$ the Spanier-Whitehead dual of $X$.



\begin{corollary}\label{cor:split_higher_real} 
In the situation of \Cref{thm:existence_finite_complexes}, there is an equivalence of semi-linear $E(G,k)$-$H$-modules $E(G,k)\wedge DX\cong map(H,E(G,k))^{\vee n}$ for some $n\neq 0$, and consequently there is an equivalence of $E(G,k)^{hH}$-modules $E(G,k)^{hH}\wedge DX\cong E(G,k)^{\vee n}$.
\end{corollary}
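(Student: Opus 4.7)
The plan is to lift a $k[H]$-basis of $K(G,k)^0(X)$ to $E(G,k)^0(X)$ and construct the first equivalence as the induced semi-linear map; the second then follows by taking $H$-homotopy fixed points. Because $X$ has cells in even dimensions and $E(G,k)$ is even periodic, the semi-linear $E(G,k)$-$H$-module $M := E(G,k)\wedge DX$ is free of finite rank over $E(G,k)$ with $\pi_\ast(M)$ concentrated in even degrees, and $\pi_0(M) = E(G,k)^0(X)$ is a free module over $E(G,k)^0$ whose reduction modulo the maximal ideal $\mathfrak{m}\subset E(G,k)^0$ is $K(G,k)^0(X)$. By \Cref{thm:existence_finite_complexes}, the latter is free of rank $n$ over $k[H]$.

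I would then pick lifts $x_1,\dots,x_n\in E(G,k)^0(X)$ of a $k[H]$-basis. By the evaluation-at-unit adjunction recalled just above the corollary, homotopy classes of semi-linear $E(G,k)$-$H$-module maps $\operatorname{map}(H,E(G,k))\to N$ are in bijection with elements of $\pi_0 N$, so these lifts assemble into a semi-linear $E(G,k)$-$H$-module map
\[
\phi\colon \operatorname{map}(H,E(G,k))^{\vee n}\longrightarrow E(G,k)\wedge DX.
\]
To verify $\phi$ is an equivalence, I would smash it over $E(G,k)$ with $K(G,k)$, obtaining a map $\operatorname{map}(H,K(G,k))^{\vee n}\to K(G,k)\wedge DX$ of $K(G,k)$-modules with even-concentrated homotopy. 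It is therefore an equivalence iff it is an isomorphism on $\pi_0$; by construction it sends the standard $k[H]$-basis of $\pi_0\operatorname{map}(H,K(G,k))^{\vee n}\cong k[H]^n$ to our chosen basis of $K(G,k)^0(X)$, so this is clear. Since $\phi$ is a map of free $E(G,k)$-modules of the same finite rank whose reduction mod $\mathfrak{m}$ is an isomorphism, Nakayama's lemma applied to the local ring $E(G,k)^0$ together with even periodicity implies that $\phi$ is an equivalence on underlying spectra. Both source and target are Borel-complete $H$-spectra (being obtained from $E(G,k)$ by smashing with, or coinducing from, finite objects), so this underlying equivalence promotes to an equivalence of semi-linear $E(G,k)$-$H$-modules.

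Finally, for the second equivalence I would apply $(-)^{hH}$ to $\phi$. Because $DX$ is a finite, non-equivariant spectrum, $(E(G,k)\wedge DX)^{hH}\simeq E(G,k)^{hH}\wedge DX$; on the other side, the evaluation-at-identity map furnishes an equivalence $\operatorname{map}(H,E(G,k))^{hH}\simeq E(G,k)$ (the $H$-equivariant maps $H\to E(G,k)$ are determined by their value at the unit), so $(\operatorname{map}(H,E(G,k))^{\vee n})^{hH}\simeq E(G,k)^{\vee n}$. Combining these yields the stated equivalence of $E(G,k)^{hH}$-modules. The only genuinely delicate point in this program is ensuring the non-equivariant lifts $x_1,\dots,x_n$ really do give rise to an equivariant map, which is handled entirely by the coinduction adjunction recalled in the excerpt; the remaining work is a Nakayama argument and standard manipulations of Borel-complete $H$-spectra.
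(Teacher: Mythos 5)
Your proposal is correct and follows essentially the same route as the paper's proof: fix a $k[H]$-basis of $K(G,k)^0(X)$, lift it to $\pi_0(E(G,k)\wedge DX)$ using evenness, use the coinduction/evaluation adjunction to produce the semi-linear map, and verify it is an equivalence by reducing mod the maximal ideal (Nakayama plus even periodicity), then take $H$-homotopy fixed points for the second statement. You supply a few more intermediate details (the Nakayama step and the $(-)^{hH}$ derivation) than the paper, but the underlying argument is identical.
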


This result will very easily implies the next, which shows that the  extension $A:=E(G,k)^{hH}\to B:=E(G,k)$ satisfies the assumption of \Cref{Galoisdesc}.

\begin{corollary}\label{cor:higher_real_transfer}
	In the situation of \Cref{cor:split_higher_real}, the rationalized transfer map
	\[ K_0(E(G,k))\otimes\mathbb{Q}\to K_0(E(G,k)^{hH})\otimes\mathbb{Q}\]
	is surjective.
\end{corollary}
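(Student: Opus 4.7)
The plan is to apply \Cref{cor:split_higher_real} directly. Set $A := E(G,k)^{hH}$ and $B := E(G,k)$, and denote the transfer by $i_\ast \colon K_0(B) \to K_0(A)$. First I would reduce rational surjectivity to the claim that the unit $[A]\in K_0(A)\otimes\mathbb{Q}$ lies in the image of $i_\ast$: by the projection formula, $i_\ast$ is $K_0(A)$-linear (with $K_0(A)$ acting on $K_0(B)$ via restriction $i^\ast$), so its image is an ideal of $K_0(A)\otimes\mathbb{Q}$, which equals the whole ring as soon as it contains the unit. This is the same reduction that underlies the equivalence in \Cref{prop:condition}.

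Next I would exploit the equivalence from \Cref{cor:split_higher_real}: writing $A \wedge DX \simeq B^{\vee n}$ as $A$-modules for some nonzero integer $n$, I pass to $K_0(A)$ on both sides. The right-hand side contributes $n\cdot i_\ast([B])$. The left-hand side, being $A$ smashed with a finite spectrum, yields $\chi(DX)\cdot [A]$; indeed, the ring map $\mathbb{Z} = K_0(\mathbb{S}) \to K_0(A)$ carries $1\mapsto [A]$ and $[DX]=\chi(DX)$ to $[A\wedge DX]$. Equating the two expressions gives
\[ n\cdot i_\ast([B]) \;=\; \chi(DX)\cdot [A] \quad\text{in } K_0(A). \]

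To finish, I need to know $\chi(DX) \neq 0$. Here I would use the even-cell hypothesis on $X$: because $K(G,k)$ is even periodic and $X$ has cells only in even dimensions, the Atiyah--Hirzebruch spectral sequence for $K(G,k)^\ast(X)$ collapses for parity reasons, identifying $\dim_k K(G,k)^0(X)$ with the number of cells of $X$, which equals $\chi(X) = \chi(DX)$. On the other hand \Cref{thm:existence_finite_complexes} asserts that $K(G,k)^0(X)$ is a nonzero free $k[H]$-module of some rank---and this rank is precisely the integer $n$ appearing in \Cref{cor:split_higher_real}. Therefore $\chi(DX) = n\cdot |H| \neq 0$, which, combined with the displayed equation above, gives $i_\ast([B]) = |H|\cdot [A]$ rationally. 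Hence $[A]$ lies in the image of the rationalized transfer, completing the argument.

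Since all the homotopy-theoretic content is packaged into the cited results, there is no genuine obstacle in the proof of this corollary. The one point requiring a moment's care is the identification of the $n$ in $A\wedge DX\simeq B^{\vee n}$ with the $k[H]$-rank of $K(G,k)^0(X)$---but this identity is built into the construction of the equivalence in \Cref{cor:split_higher_real}, and could be bypassed anyway since it only matters that both are nonzero.
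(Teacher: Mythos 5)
Your proof is correct and follows essentially the same route as the paper's: use \Cref{cor:split_higher_real} to produce a class in the image of the transfer, identify that class with $\chi(DX)\cdot[A]$ in $K_0(A)$, observe $\chi(DX)\neq 0$, and conclude via the ideal property of the image. The only (harmless) difference is that you detour through the Atiyah--Hirzebruch spectral sequence and the $k[H]$-rank of $K(G,k)^0(X)$ to see $\chi(DX)\neq 0$, whereas the paper just notes that $DX$ is a nonzero finite complex with cells only in even dimensions, so $\chi(DX)$ is simply its (positive) number of cells.
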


In some special cases, we can obtain stronger descent results in the algebraic $K$-theory of higher real $K$-theories, by using the following
sharper variant of \Cref{thm:existence_finite_complexes}. For this, we
denote by $X$ the $p$-local $p$-cell complex which has all attaching maps
equal to $\alpha_1$, and which is called $T(0)_{(1)}$ in \cite[Example 7.1.17]{ravenel_green_book}.

\begin{thm}\label{thm:p_cell_complexes} Assume that $n=p-1$ and that $\mathbb{F}_{p^n}\subset k$. Then, for every $C_p\subset \mathrm{Aut}_k(G)$, the 
$k[C_p]$-module $K(G,k)^0(X)$ is free of rank $1$.
\end{thm}

\begin{corollary}\label{cor:better_real_transfer}
	Assume that $n=p-1$ and that $\mathbb{F}_{p^n}\subset k$. 
	Then $p$ is in the image of the transfer map
	\[ K_0(E(G,k))\to K_0(E(G,k)^{hC_p}).\]
\end{corollary}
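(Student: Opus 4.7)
The plan is to mirror the strategy used for \Cref{cor:higher_real_transfer}, now leveraging the fact that \Cref{thm:p_cell_complexes} furnishes a very specific finite complex $X$ for which $K(G,k)^0(X)$ is free of rank exactly $1$ as a $k[C_p]$-module. I would first specialize the construction behind \Cref{cor:split_higher_real} to the case $H = C_p$, $n = 1$: the rank-$1$ generator of $K(G,k)^0(X) \cong k[C_p]$ lifts, via the evaluation-at-the-unit bijection recalled just before \Cref{cor:split_higher_real}, to a semi-linear map $\mathrm{map}(C_p, E(G,k)) \to E(G,k) \wedge DX$, which is automatically an equivalence because both sides are free of the same rank and the map induces an isomorphism on $K(G,k)^0$. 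Passing to $C_p$-homotopy fixed points (using dualizability of $DX$) then gives an equivalence of $E(G,k)^{hC_p}$-modules
\[ E(G,k)^{hC_p} \wedge DX \simeq E(G,k). \]
Having a single copy of $E(G,k)$ on the right is the crucial sharpening over \Cref{cor:higher_real_transfer}, where one only obtains $E(G,k)^{\vee n}$ and must divide by $n$ after rationalization.

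Next I would compute the class $[E(G,k)^{hC_p} \wedge DX] \in K_0(E(G,k)^{hC_p})$ in two complementary ways. On the one hand, the equivalence above immediately identifies it with $[E(G,k)]$, where $E(G,k)$ is regarded as a perfect $E(G,k)^{hC_p}$-module via restriction of scalars along the Galois extension $E(G,k)^{hC_p} \to E(G,k)$. On the other hand, $X$ is built from $p$ cells by iterated attachment along $\alpha_1$, and since $|\alpha_1| = 2p-3$ each new cell has dimension $2p-2$ higher than the previous top cell, giving cells at dimensions $0, 2p-2, 2(2p-2), \ldots, (p-1)(2p-2)$, all of which are even. Additivity of $K_0$ across the cell filtration of $DX$ therefore yields
\[ [E(G,k)^{hC_p} \wedge DX] = p \cdot [E(G,k)^{hC_p}] = p \in K_0(E(G,k)^{hC_p}). \]

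Combining the two computations gives the identity $[E(G,k)] = p$ in $K_0(E(G,k)^{hC_p})$. Because the transfer map in the statement is precisely restriction of scalars along $E(G,k)^{hC_p} \to E(G,k)$, it carries $1 = [E(G,k)]$ in $K_0(E(G,k))$ to $[E(G,k)] \in K_0(E(G,k)^{hC_p})$, which we have just identified with $p$. Hence $p$ lies in the image of the transfer, completing the argument. The main delicacy in this approach is verifying that the argument sketched after the statement of \Cref{cor:split_higher_real} genuinely produces a \emph{semi-linear} equivalence rather than a plain equivalence of underlying spectra, since it is exactly compatibility with the $C_p$-action that licenses passing to homotopy fixed points to obtain the equivalence displayed in the first paragraph.
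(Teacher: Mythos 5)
Your proof is correct and follows exactly the route the paper intends: the paper's proof is the one-liner ``This follows from \Cref{thm:p_cell_complexes} in the same way in which \Cref{cor:higher_real_transfer} follows from \Cref{thm:existence_finite_complexes},'' and you have simply unpacked that recipe, specializing \Cref{cor:split_higher_real} to the rank-one situation so that $E(G,k)^{hC_p}\wedge DX\simeq E(G,k)$, and then evaluating the class $[E(G,k)^{hC_p}\wedge DX]$ both as a transferred class and as $p\cdot[E(G,k)^{hC_p}]$ via the even $p$-cell filtration. (A minor notational caution: the $n$ you set to $1$ is the rank appearing in \Cref{cor:split_higher_real}, not the height $n=p-1$ of the statement; otherwise the argument matches the paper.)
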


The proof of \Cref{thm:p_cell_complexes} is a direct application of Ravenel's computation of $BP_*(X)$. It will be explained at the very end of this appendix.
We now begin working on the proof of \Cref{thm:existence_finite_complexes}, while the proofs of
\Cref{cor:split_higher_real} and \Cref{cor:higher_real_transfer} will appear immediately after that.

Our proof is a digest of some parts of \cite{ravenel_orange_book}, and more exactly of J. Smith's
construction of finite complexes \cite{smith_finite_complexes}. Since the finite complex $X$ will be constructed out of some complex projective space, we will first need
some information about the $k[[\mathrm{Aut}(G,k)]]$-module $K(G,k)^0(\mathbb{CP}^\infty)$. This is isomorphic to $k[[T]]$, after fixing a coordinate $T$ of the formal group of the complex orientable ring 
spectrum $K(G,k)$. Writing $F(X,Y)\in k[[X,Y]]$ for the resulting formal group law, we obtain 
\[ \mathrm{Aut}_k(G)\cong \left\{ f\in T\cdot k[[T]]\, \mid \, f'(0)\neq 0\mbox{ and } F(f(X),f(Y))=f(F(X,Y)) \right\}.\]

Since $K(G,k)^0(\mathbb{CP}^\infty)\cong k[[T]]$ {\em is} the ring of functions on our formal group, an element $f\in \mathrm{Aut}_k(G) $ acts on it as the unique continuous map of $k$-algebras sending $T$ to $f(T)$.
This completely determines the $k[[\mathrm{Aut}_k(G)]]$-module $K(G,k)^0(\mathbb{CP}^\infty)$, and
shows in particular that it is faithful. Jointly with $K(G,k)^0(\mathbb{CP}^\infty)\cong \lim_N K(G,k)^0(\mathbb{CP}^N)$, this implies the following.
  
\begin{prop}\label{prop:non-trivial-action}
For every $1\neq f\in \mathrm{Aut}_k(G)$, there is some $N$ such that $f$ acts non-trivially on $K(G,k)^0(\mathbb{CP}^N)$.
\end{prop}

For the sake of readability, we now write $\mathbb{CP}(N):=\mathbb{CP}^N$.

\begin{prop}\label{prop:free_summand}
There is some $N\ge 0$ such that for every inclusion ${C_p\subset H\cap\mathrm{Aut}_k(G)}$, we have
\[ K(G,k)^0(\mathbb{CP}(N)^{\times (p-1)})=U\oplus k[C_p] \]
as $k[C_p]$-modules, for some $U$.
\end{prop}

\begin{proof} 
Since $H$ is finite, and using \Cref{prop:non-trivial-action}, we can find some $N\ge 0$ such that for every inclusion $C_p\subset  H$, the $k[C_p]$-module $V:=K(G,k)^0(\mathbb{CP}(N))$
is non-trivial, and the claim then follows from $K(G,k)^0(\mathbb{CP}(N)^{\times (p-1)})\cong V^{\otimes (p-1)}$
and the following algebraic result, established during the proof of \cite[Theorem C.3.3]{ravenel_orange_book}: If $V$ is a non-trivial
$k[C_p]$-module, then $V^{\otimes(p-1)}$ splits off a free module of rank $1$. (The proof in {\em loc.~cit.} is written for some specific finite field, but evidently works for every field of characteristic $p$).
\end{proof}

\begin{proof}[Proof of {\Cref{thm:existence_finite_complexes}}]
Put $m:=\mathrm{dim}_{k}(U)+1$ with $U$ as in \Cref{prop:free_summand} and $\kappa:=(p-1){m+1 \choose 2}$. Then there is an idempotent $e\in\mathbb{Z}_{(p)}[\Sigma_\kappa]$ such that for every finite-dimensional
$k$-vector space $W$, the direct summand $eW^{\otimes \kappa}\subset W^{\otimes \kappa}$ is non-zero
if and only if $\mathrm{dim}_{\kappa}(W)\ge m$ \cite[Theorem C.1.5]{ravenel_orange_book}.
In particular, we have $eU^{\otimes \kappa}=0$. 

With $N\ge 0$ as in \Cref{prop:free_summand}, we now consider the complex 
\[ Y:= e\cdot\left(  \left( \mathbb{CP}(N)^{\times (p-1)} \right)  ^{\times \kappa}   \right)_{(p)} .\]
It clearly is a finite, $p$-local complex with cells in even dimension.

We denote $H'':=H\cap\mathrm{Aut}_k^1(G)$ and
claim that the $k[H'']$-module $K(G,k)^0(Y)$ is (non-trivial and) finite free. Since $H''$ is a $p$-group, $k[H'']$ is an Artin local ring. By \cite[Thm.~19.29]{Lam} $K(G,k)^0(Y)$ is $k[H'']$-free if and only if it is $k[H'']$-projective. By Chouinard's theorem \cite[Cor.\ 1.1]{chouinard}, this holds if for every inclusion $E\subset H''$ of an elementary $p$-abelian subgroup, this module is
projective over $k[E]$. Every such $E$ is a finite subgroup of the group of units of a commutative subfield
of $D$, and thus cyclic. So we can assume that we are given some inclusion $E=C_p\subset H''$.

By construction, \Cref{prop:free_summand} and the K\"unneth isomorphism, we have 
\[ K(G,k)^0(Y)=e\cdot\left((U\oplus k[C_p])^{\otimes \kappa}\right) \]
as a $k[C_p]$-module, using the notation above. Observing that every $k[C_p]$-module of the form $k[C_p]\otimes M$ is free
(say, as a consequence of the projection formula), we can multiply out 
\[ (U\oplus k[C_p])^{\otimes \kappa} = U^{\otimes \kappa}\oplus\tilde{F} \]
for some finite free $k[C_p]$-module $\tilde{F}\neq 0$.
We can now conclude that 
\[  K(G,k)^0(Y)=e\cdot\left(  U^{\otimes \kappa}\oplus\tilde{F} \right) = e\cdot\tilde{F} \]
is a non-trivial finite free $k[C_p]$-module, and hence finite free over $k[H'']$, as claimed.

We now want to induce this up along the inclusions of groups
$H''\subset H':=H\cap\mathrm{Aut}_k(G)\subset H$. The $k[H']$-module $K(G,k)^0(Y)\otimes_{k[H'']}k[H']$ is clearly finite free, and by the projection formula, it is isomorphic to $K(G,k)^0(Y)\otimes_{k}k[H'/H'']$, endowed with the diagonal $H'$-action. So we next find a finite even complex $Z$ with
$K^0(G,k)(Z)\cong k[H'/H'']$ as a $k[H']$-module, for then $K(G,k)^0(Y\wedge Z)$
will be finite free over $k[H']$. 

Now, since $\mathrm{char}(k)=p$, multiplication by $p$ on $k^*$ is injective, and hence $H'/H''\subseteq k^*$ is a cyclic group of order coprime to $p$. Since $k[H'/H'']$ is semi-simple and $k$ evidently contains the $|H'/H''|$-roots of unity, as an $H'/H''$-module $k[H'/H'']$ is a sum of powers of a generating character. Pulling back to an $H'$-action and recalling our initial discussion, we have \[ k[H'/H'']\cong \bigoplus\limits_{i=0}^{(\mid H'/H'' \mid -1)} Lie^{\otimes i}.\]
 We can thus take $Z:=\bigvee_{i=0}^{(\mid H'/H'' \mid -1)} S^{2i}$ for the desired complex.

To induct up further along $H'\subset H$, we observe that \[ K(G,k)^0(Y\wedge Z)\otimes_{k[H']}k[H]\cong K(G,k)^0(Y\wedge Z)\otimes_k k[\Gamma],\] where we denote $\Gamma:=H/H'\subset\mathrm{Aut}(k)$. As above, we then need to find a finite even complex $W$ with $K^0(G,k)(W)\cong k[\Gamma]$ as a $k[H]$-module, for then
$X:=Y\wedge Z\wedge W$ will be as desired. By Galois descent, we have an isomorphism of semi-linear $k-\Gamma$-modules
\[k[\Gamma]\cong k\otimes_{k^\Gamma}k\cong k \otimes_{k^{\Gamma}} (k^\Gamma)^{\oplus \mid\Gamma\mid}\cong k^{\oplus \mid\Gamma\mid},\] where the semi-linear action on the tensor products is through the left tensor factors. Hence we can take $W:=\bigvee\limits_{\mid G \mid} S^0$ as the desired complex.
\end{proof}

\begin{proof}[Proof of {\Cref{cor:split_higher_real}}]
Fix a basis $\{\alpha_1,\ldots,\alpha_n\}$ of the $k[H]$-module $K^0(G,k)(X)$. Since $X$ is even, these lift to elements in $E(G,k)^0(X)=\pi_0(E(G,k)\wedge DX)$ which determine a semi-linear map $map(H,E(G,k))^{\vee n}
\to E(G,k)\wedge DX$, and it suffices to show that this map is an equivalence (of spectra).
Since both spectra are finite free $E(G,k)$-modules, it suffices to check this after application of $\pi_0(-)\otimes_{E^0(G,k)}K^0(G,k)$ which yields the $K^0-H$ semi-linear map $map(H,K^0)^{\oplus n}\longrightarrow K^0(X)$ determined by the basis above. This map is clearly an isomorphism.
\end{proof}

\begin{proof}[Proof of \Cref{cor:higher_real_transfer}]
By \Cref{cor:split_higher_real}, there is a finite even complex $X$ such that the $E(G,k)^{hH}$-module
$E(G,k)^{hH}\wedge DX$ admits the structure of an $E(G,k)$-module, hence the class $[E(G,k)^{hH}\wedge DX]\in K_0(E(G,k)^{hH})$
lies in the image of the transfer map. Using $[\Sigma^2 DX]=[DX]$, that $DX$ has only even cells and induction on this number $M$ of cells
shows that $[E(G,k)^{hH}\wedge DX]=M\cdot [E^{hH}]$ is a positive multiple of the unit. So the image of the rationalized transfer is an ideal which contains $1$;
hence the rationalized transfer is surjective.
\end{proof}

\begin{proof}[Proof of \Cref{thm:p_cell_complexes}] We need to compute the $k[C_p]$-module $V:=K(G,k)^0(X)$, which is a $p$-dimensional representation
of $C_p$ over $k$. We will deduce this from Ravenel's result 
\cite[Lemma 7.1.11]{ravenel_green_book}, which states 
that $BP_*(X)$ is isomorphic, 
as a $BP_*BP$-comodule, to the subcomodule of $BP_*BP\cong 
BP_*[t_1,t_2,\ldots]$ generated freely over $BP_*$ by the
 set $\{t_1^i\,\mid\, 0\leq i\leq p-1\}$. This implies that the $K(G,k)_0K(G,k)$-comodule $M:=K(G,k)_0(X)$ is free over $k$ on the images of the $t_1^i$.

Recall that we can identify \[K(G,k)_0K(G,k)\cong 
maps_c(\mathrm{Aut}^1_k(G),k)\cong k[t_1,t_2,\ldots]/(t_i^{p^n}-t_i)\] in such 
a way that the continuous map $t_i$ on $\mathrm{Aut}^1_k(G)$ is 
given as follows. We have $\mathrm{Aut}^1_k(G)\subset 
1+\Pi\cdot {\mathcal O}_D$, where $D$ is the skew
 field over $\mathbb{Q}_p$
 of invariant $1/n$, and $\Pi$ is a uniformizer. Since 
 $\mathbb{F}_{p^n}\subset k$, this inclusion is in fact an equality, and
  we can write every $g\in \mathrm{Aut}^1_k(G)$ uniquely as 
  $g=1+\sum\limits_i^\infty [t_i(g)]\Pi^i$ using the Teichm\"uller 
  lift \[[-]:\mathbb{F}_{p^n}\longrightarrow
W(\mathbb{F}_{p^n})\subset {\mathcal O}_D.\]
 Briefly, the $t_i$ are the digits in the $\Pi$-adic expansion.

 To ease the notation, we introduce $e_i:=t_1^i$, a $k$-basis of $M$, and denote by $f_i\in V$ the dual basis.
Writing $\psi:M\longrightarrow K(G,k)_0K(G,k)\otimes_k M$ for the comodule structure, our representation $V$ is the dual of $M$, in the sense that
we have $(g\cdot v)(m)=(\mathrm{ev}_g\otimes v)(\psi(m))$ for every $g\in\mathrm{Aut}^1_k(G)$,
$v\in V$, $m\in M$, and denoting $\mathrm{ev}_g:K(G,k)_0K(G,K)\to k$ the evaluation of a continuous map at $g$. Finally, it is clear that the map
$t_1:\mathrm{Aut}^1_k(G)\longrightarrow (\mathbb{F}_{p^n},+)\subset k$
is a homomorphism, and hence $t_1$ is primitive, and in particular
that implies that $\psi(e_j)=(t_1\otimes 1 + 1 \otimes e_1)^j$.
Given the above, a formal computation which we leave to the reader, gives
\[ g\cdot f_i = f_i + \sum\limits_{j>i}^{p-1} {j \choose i}\alpha^{j-i}\cdot f_j\]
for all $g\in\mathrm{Aut}^1_k(G), 0\leq i\leq p-1$ and we abbreviate $\alpha:=t_1(g)$. For the endomorphism $\varphi:=g-1 \in End_k(V)$, this
implies that $\varphi^p=0$ and that $\varphi^{p-1}(f_0)=(p-1)! \alpha^{p-1} f_{p-1}$. In particular,
if $\alpha\neq0 $, then $\varphi^{p-1}\neq 0$. 

We now specialize this to the
case of interest, namely when $\langle g \rangle = C_p$, and claim that in this case we have $\alpha=t_1(g)\neq 0$. By the above reminder on the relation between the 
$t_i$ and the $\Pi$-adic expansion of $g$, this is equivalent to saying that 
$g-1$ and $\Pi$ have the same valuation in $D$. This is clear, because both valuations are $\frac{1}{p-1}$ times the valuation of $p$: $g$ is a primitive $p$th root of unity in $D$ and $1/n=1/(p-1)$ is the invariant of $D$. 

We conclude that the order of $g-1$ acting on $V$ is exactly $p$, and the Jordan
normal form implies, that $V$ is a free module (of rank $1$) over $k[C_p]$, as desired.
\end{proof}

\begin{proof}[Proof of \Cref{cor:better_real_transfer}]
This follows from \Cref{thm:p_cell_complexes} in the same way in which
\Cref{cor:higher_real_transfer} follows from \Cref{thm:existence_finite_complexes}.

\end{proof}

\bibliographystyle{alpha}
\bibliography{GeneralLemma}
\end{document}